\documentclass[11pt]{article}

\usepackage[margin=1.08in]{geometry}
\usepackage{amsmath,amssymb,amsthm,mathtools,mathrsfs}
\usepackage{enumitem}
\usepackage{xcolor}
\usepackage{hyperref}
\usepackage[nameinlink,capitalize]{cleveref}
\usepackage{microtype}

\hypersetup{
  colorlinks=true,
  linkcolor=blue!60!black,
  citecolor=blue!60!black,
  urlcolor=blue!60!black
}

\newtheorem{theorem}{Theorem}[section]
\newtheorem{proposition}[theorem]{Proposition}
\newtheorem{lemma}[theorem]{Lemma}

\theoremstyle{definition}
\newtheorem{definition}[theorem]{Definition}
\newtheorem{assumption}[theorem]{Assumption}
\theoremstyle{remark}

\numberwithin{equation}{section}
\numberwithin{theorem}{section}

\newcommand{\R}{\mathbb R}
\newcommand{\N}{\mathbb N}
\newcommand{\E}{\mathbb E}
\newcommand{\Prob}{\mathbb P}
\newcommand{\Var}{\operatorname{Var}}

\newcommand{\Tr}{\operatorname{Tr}}
\newcommand{\rank}{\operatorname{rank}}
\newcommand{\Span}{\operatorname{span}}

\newcommand{\op}{\mathrm{op}}
\newcommand{\diag}{\operatorname{diag}}
\newcommand{\Law}{\operatorname{Law}}
\newcommand{\dist}{\operatorname{dist}}
\newcommand{\ip}[2]{\left\langle #1,#2\right\rangle}
\newcommand{\norm}[1]{\left\lVert #1\right\rVert}
\newcommand{\abs}[1]{\left\lvert #1\right\rvert}
\newcommand{\taud}{\tau_d}
\newcommand{\taup}{\tau}
\newcommand{\cA}{\mathcal A}

\newcommand{\ons}{\operatorname{ons}}
\newcommand{\Av}{\operatorname{Av}}

\newcommand{\xto}{\xrightarrow}
\newcommand{\ind}{\mathbf 1}
\newcommand{\Symm}{\mathbb{R}^{d \times d}_{\mathrm{sym}}}
\newcommand{\PL}{\operatorname{PL}}
\newcommand{\eps}{\varepsilon}
\newcommand{\rmd}{\mathrm{d}}
\newcommand{\one}{\mathbf{1}}

\title{Free-Probabilistic State Evolution and Random Matrix Discrepancy}
\author{
August Y. Chen \thanks{Cornell University, Department of Computer Science. Ithaca, NY, USA. Email: \texttt{ayc74@cornell.edu}}
\and
Ahmed El Alaoui \thanks{Cornell University, Department of Statistics and Data Science. Ithaca, NY, USA. Email: \texttt{elalaoui@cornell.edu}.}
}
\date{}

\begin{document}
\maketitle

\begin{abstract}
Let $A_1,\ldots,A_n$ be independent $d \times d$ real symmetric Gaussian random matrices, and consider the linear operator $\cA(x) = n^{-1/2}\sum_{i=1}^n x_iA_i$, $x\in \R^n$. We construct an iterative algorithm in the Approximate Message Passing family which iterates over $\cA$ and its adjoint $\cA^*$, and establish a state evolution result which  characterizes its behavior in the limit $d\to \infty, 2n/d^2 \to \alpha$ in terms of a correlated Gaussian-semicircular process in a free probability space, in the sense of strong convergence of operators.  We then apply this iteration to the random matrix discrepancy problem which asks for a binary vector $x \in \{-1,+1\}^n$ such that $\cA(x)$ has a small operator norm. Our algorithm achieves an operator norm $2\sigma(\alpha)$, for an explicit expression of the standard deviation $\sigma(\alpha)<1$ for all $0<\alpha<\alpha_* \simeq 5.74$. This resolves the algorithmic question of~\cite{kunisky2023average,maillard2025average} in this interval.      
\end{abstract}

\tableofcontents

\section{Introduction}
In this paper we study an Approximate Message Passing (AMP) iteration applied to a random, matrix-valued linear operator $\cA$ defined as follows:
let $A_1,\ldots,A_n$ be symmetric $d \times d$ independent random matrices with independent Gaussian upper-triangular entries:
\begin{equation}\label{eq:GOE}
  (A_i)_{jk} = (A_i)_{kj} \sim N(0,1/d)\quad (j<k),
  \qquad
  (A_i)_{jj}\sim N(0,2/d)\,,
\end{equation}
and define the random operator $\cA : \R^n \to \Symm$, where $\Symm$ denotes the real vector space of symmetric $d\times d$ matrices, by
\begin{equation}
    \cA (x) = \frac{1}{\sqrt{n}} \sum_{i=1}^n x_i A_i\,.
\end{equation}
 This is motivated by a random version of the matrix discrepancy problem due to~\cite{kunisky2023average}, which asks whether it is possible to produce a binary vector $x \in \{-1,+1\}^n$ such that the operator norm $\|\cA(x)\|_{\op}$ is strictly smaller than 2 in the large dimension limit. (By Wigner's semicircle law, $\|\cA(x)\|_{\op}=2+o_d(1)$ if $x/\sqrt{n}$ is a unit vector chosen independently of the matrices $A_i$.) In Section~\ref{sec:matrix-discrepancy} we propose an algorithmic solution to this question achieving an explicit expression for the operator norm as a function of the ratio  
\[\alpha_d =  \frac{2n}{d^2} \,.\]
We now set up notation that will be used throughout this paper. For $M, N \in \Symm$ we define
\[
  \taud(M):=\frac1d\Tr(M)\,,
  \qquad
  \ip{M}{N}_d:=\taud(MN)\,,
  \qquad
  \norm{M}_{2,d}^2:=\taud(M^2)\,,
\]
and for $x,y \in \R^n$,
\[
  \ip{x}{y}_n:=\frac1n\sum_{i=1}^n x_i y_i\,,
  \qquad
  \norm{x}_n^2:=\frac1n\sum_{i=1}^n x_i^2\,.
\]
Let
$\cA^* : (\Symm, \ip{\cdot}{\cdot}_d) \to (\R^n, \ip{\cdot}{\cdot}_n)$ denote the adjoint of $\cA$ with respect to the indicated normalized inner products. More explicitly,
\begin{equation}
    \cA^* (M) = \sqrt{n} \big(\ip{A_i}{M}_d\big)_{i=1}^n\,.
\end{equation}
We consider the following family of vector--matrix AMP iterations applied to $\cA$ and $\cA^*$: 
\begin{align}
  u^{t+1}&=\cA^*(M^t)- \sum_{r=0}^{t} a_{t,r} m^r\,,
  \qquad
  M^t=\sum_{s=0}^{t} a_{t,s} V^{s}\,,\label{eq:AMP0}\\
  V^t &=\cA(m^t)-\alpha_d \sum_{r=0}^t d_{t,r} M^{r-1}\,,
  \qquad
  m_i^t=g_t(u_i^0,\ldots,u_i^t)\,,\label{eq:AMP1}
\end{align}
where the scalar coefficients $a_{t,s}$ and the maps $g_t : \R^{t+1} \to \R$ are fixed, and 
\begin{equation}\label{eq:d_{rs}}
d_{t,r} = \frac{1}{n}\sum_{i=1}^n \partial_{r}g_{t}(u_i^0,\ldots,u_i^{t})\,.
\end{equation} 

The iteration is initialized at time $t=0$ with $u^0\in \R^n$ and run up to a finite time horizon $T$. We use the convention $M^{-1} = V^{-1} = 0$.
The subtracted terms in the above iteration are called the \emph{Onsager correction} terms and are denoted as
\begin{align}\label{eq:onsdef}
  \ons_u^t :=\sum_{r=0}^{t} a_{t,r} m^r\,,
  \qquad
\ons_V^t := \alpha_d \sum_{r=0}^t d_{t,r} M^{r-1}\,.
\end{align}

We wish to establish an exact asymptotic description of all vector and matrix iterates through every fixed time $t$, in the high-dimensional limit $d\to\infty$ with
\begin{equation}\label{eq:alpha}
  \alpha_d =  \frac{2n}{d^2} \longrightarrow \alpha\in(0,\infty)\,.
\end{equation}  
The result will take the form of a \emph{state evolution} statement which describes the \emph{empirical} convergence of entries on the `$u$-side' and the \emph{strong} convergence of operators on the `$V$-side' in the sense of free probability. The limit is described by a correlated Gaussian-semicircular process. 

\section{Limiting state evolution}
We start by making explicit assumptions on the iteration's data. 
We say that a function $\psi:\R^k\to\R$ is pseudo-Lipschitz of order $m$, and write $\psi \in \PL_m(\R^k)$ if there exist $L>0$ such that for all $x,y \in \R^k$,
\begin{equation}
    \big|\psi(x) - \psi(y)\big| \le L\big(1+\|x\|_2^{m-1}+\|y\|_2^{m-1}\big) \|x-y\|_2\,.
\end{equation}

\begin{assumption}[Aspect ratio and matrix normalization]\label{ass:aspect}
The matrices $A_1,\ldots,A_n$ are independent GOE matrices with the normalization of Eq.~\eqref{eq:GOE}, and $\alpha_d=2n/d^2\to\alpha\in(0,\infty)$.
\end{assumption}

\begin{assumption}[Initialization]\label{ass:init}
The triangular array $(u_d^0)_{d\ge1}$ is defined on a common probability space, is independent of the Gaussian operators, and there is a random variable $U^0$, with moments of all orders, such that the empirical
measures
\[
  \widehat\mu_{u_d^0}:=\frac1n\sum_{i=1}^n\delta_{u_{d,i}^0}
\]
converge almost surely to $\Law(U^0)$ in $p$-Wasserstein distance for every finite $p$. Equivalently, almost surely, the empirical measures converge weakly and all empirical moments converge to the corresponding moments of $U^0$.
\end{assumption}

\begin{assumption}[Scalar coefficients]\label{ass:a1}
The coefficients $(a_{t,s})$ are deterministic, and $0 <a_* \le \min_{t \le T} |a_{t,t}|$ and  $ \max_{s \le t\le T} |a_{t,s}| < a^*$ for some positive finite pair $a_*<a^*$. 
\end{assumption}

\begin{assumption}[Nonlinearities]\label{ass:g}
For each $0\le t\le T$, the function
$g_t:\R^{t+1}\to\R$ is $C^1$.  There is a finite integer
$k_t$ such that
\[
 g_t,\partial_0g_t,\ldots,\partial_tg_t
 \in \PL_{k_t}(\R^{t+1})\,.
\]
\end{assumption}

\begin{assumption}[Linearity in last iterate]
\label{ass:transversality}
The initial map satisfies $g_0(x)\ne 0$ for every $x\in\R$.  In
particular, $m^0=g_0(u^0)\ne0$ and $G^0=g_0(U^0)\ne0$ almost surely.
For every $1\le t\le T$, there are measurable functions
$r_t,c_t:\R^t\to\R$ such that
\begin{equation}\label{eq:transverse-form}
 g_t(x^0,\ldots,x^t)
 =r_t(x^0,\ldots,x^{t-1})
  +c_t(x^0,\ldots,x^{t-1})x^t\,,
 \qquad c_t>0\,.
\end{equation}
\end{assumption}

We now describe the putative limiting process.
Let $U^0$ be the limiting initialization as per Assumption~\ref{ass:init}.  
Let the random variables $(U^1,\ldots,U^{T+1})$ be centered jointly Gaussian and independent of $U^0$ defined on a common probability space $(\Omega,\mathcal{F},\Prob)$,  and $(S^0,\ldots,S^T)$ be a centered semicircular family in a $C^*$-probability space $(\mathcal N,\tau)$. The trace $\tau$ is \emph{faithful}, meaning that $\tau(a^*a)=0$ implies $a=0$. This makes it possible to define a norm $\|\cdot\|$ on $\mathcal N$ by the fomula
\begin{equation}\label{eq:norm_N}
\|a\| = \lim_{p \to \infty} \tau(a^{2p})^{\frac{1}{2p}}\,.
\end{equation}
We refer the reader to~\cite{mingo2017free} for an introduction to free probability and relevant background.  
The covariances of $(U^1,\cdots,U^{T+1})$ and of $(S^0,\cdots,S^T)$ are defined recursively as follows: for $0\le r,s\le T$,
\begin{align}
\E[U^{r+1}U^{s+1}] &= \alpha\,\taup(W^rW^s)\,,
  \qquad W^t:=\sum_{s=0}^{t} a_{t,s}S^s,\label{eq:covU}\\
  \taup(S^rS^s) &= \E[G^rG^s]\,,
  \qquad G^t:=g_t(U^0,\ldots,U^t)\,.\label{eq:covS}
\end{align}

Now we define the relevant notions of convergence.

\begin{definition}[Pseudo-Lipshitz empirical convergence]\label{def:emp-conv}
For random vectors $X_i^d\in\R^k$, $1\le i\le n = n(d)$, with $\lim_{d \to \infty} n(d) =\infty$, and a random vector $X\in\R^k$, we say that $(X_1^d,\ldots,X_n^d)$ converges empirically to $X \in \R^k$ as $d \to \infty$, and  
write
\[
  (X^d_i)_{i\le n}\,\xrightarrow{\mathrm{emp}}\, X\,,
\]
 if for every $m$ and every  $\psi\in \PL_m(\R^k)$,
\[
\frac1n\sum_{i=1}^n\psi(X_i^d)\xrightarrow{p}\E\psi(X)\,.
\]
\end{definition}

\begin{definition}[Strong convergence in noncommutative distribution]\label{def:strong}
Let $(X_1^d,\ldots,X_k^d)$ be a $k$-tuple of self-adjoint random matrices and let $(x_1,\ldots,x_k)$ be self-adjoint elements of the $C^*$-probability space $(\mathcal N,\taup)$.  We say that $(X_1^d,\ldots,X_k^d)$ converges strongly in noncommutative distribution to $(x_1,\ldots,x_k)$ as $d \to \infty$, and write
\[
  (X_1^d,\ldots,X_k^d) \xrightarrow{\mathrm{str}} (x_1,\ldots,x_k)
\]
if for every noncommutative polynomial $P$,
\[
  \taud\bigl(P(X_1^d,\ldots,X_k^d)\bigr)\xto{p}
  \taup\bigl(P(x_1,\ldots,x_k)\bigr)
\]
and
\[
  \norm{P(X_1^d,\ldots,X_k^d)}_{\op}\xto{p}
  \norm{P(x_1,\ldots,x_k)}\,,
\]
 where $\|\,\cdot\,\|_{\op}$ is the matrix operator norm and $\|\,\cdot\,\|$ is the operator norm on $\mathcal{N}$ defined in~\eqref{eq:norm_N}. 
\end{definition}

We now state the state evolution theorem. Fix an integer $T\ge1$, run the AMP iteration \eqref{eq:AMP0}--\eqref{eq:AMP1} up to time $T$, and let
$h:\R^{T+1}\to\R$ be bounded Borel function.  Define a terminal
message $\widehat{m}$ (not fed back into the iteration) by
\begin{equation}\label{eq:terminal-comparison-readouts}
 \widehat m_i:=h(u_i^0,\ldots,u_i^T)\,,
 \qquad\qquad
 H:=h(U^0,\ldots,U^T)\,.
 \end{equation}
and recall
\[  m_i^T:=g_T(u_i^0,\ldots,u_i^T)\,, \qquad G^T = g_T(U^0,\ldots,U^T)\,.\]

\begin{assumption}[Null discontinuity]
\label{ass:zero-discontinuity}
Let $\operatorname{Disc}(f)$ denote the set of discontinuity points of a function $f$ and assume
\begin{equation}\label{eq:terminal-comparison-null}
 \Prob\left(  (U^0,\ldots,U^T) \in\operatorname{Disc}(h) \right)=0\,.
\end{equation}
\end{assumption}

\begin{theorem}
\label{thm:main-general}
The following holds under Assumptions \ref{ass:aspect}, \ref{ass:init},
\ref{ass:a1}, \ref{ass:g}, \ref{ass:transversality} and \ref{ass:zero-discontinuity}.  For fixed $T \ge 1$, let $(u^{t},m^{t},V^{t},M^{t})_{t \le T}$ be generated by
the AMP iteration \eqref{eq:AMP0}--\eqref{eq:AMP1}.  Then, 
\begin{align}
  \bigl(u_i^0,\ldots,u_i^{T},m_i^0,\ldots,m_i^{T}, \widehat m_i\bigr)_{i\le n}
  &\xrightarrow{\mathrm{emp}}
  \bigl(U^0,\ldots,U^{T},G^0,\ldots,G^T,H\bigr)\,,\label{eq:vector-SE-main}\\
\mbox{and}\qquad
  (V^0,\ldots,V^T,M^0,\ldots,M^T)
  &\xrightarrow{\mathrm{str}}
  (S^0,\ldots,S^T,W^0,\ldots,W^T)\,,\label{eq:matrix-SE-main}
\end{align}
where the limiting process
$(U^0,\ldots,U^{T},S^0,\ldots,S^T)$ has covariances given by
\eqref{eq:covU}--\eqref{eq:covS}.
Moreover, for every $\eta>0$,
\begin{equation}\label{eq:terminal-comparison-opnorm}
 \Prob\left(
 \bigl\|\cA(\widehat m-m^T)\bigr\|_{\op}
 >
 2(1+\sqrt\alpha)\,
 \E[(H-G^T)^2]^{1/2}
 +\eta
 \right)
 \longrightarrow 0\,.
\end{equation}
\end{theorem}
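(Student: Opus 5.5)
We will prove the theorem by a conditioning (Bolthausen-type) argument adapted to the vector--matrix setting, run jointly with a Gaussian-interpolation representation of the matrix side, and then upgrade the matrix-side moment convergence to strong convergence.

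\emph{Step 1: conditional representation.} Identify $\Symm$ with $\R^{d(d+1)/2}$ under $\ip{\cdot}{\cdot}_d$. Then $\cA$ becomes an i.i.d.\ Gaussian rectangular matrix of aspect ratio $\alpha_d$, up to the diagonal weighting, which is isotropic for this inner product. Let $\mathfrak F_t$ be the sigma-algebra generated by $u^0$ and the iterates up to time $t$. Conditionally on $\mathfrak F_t$, $\cA$ is distributed as its projection onto the affine constraints $\cA(m^r)=V^r+\ons_V^r$ and $\cA^*(M^r)=u^{r+1}+\ons_u^r$, plus an independent copy $\tilde\cA$ composed with the orthogonal projections $P^\perp_{m}$ and $P^\perp_{M}$ onto the complements of $\Span(m^0,\dots,m^t)$ and $\Span(M^0,\dots,M^t)$.

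Assumption~\ref{ass:transversality} is what makes these Gram matrices nondegenerate in the limit. Because $c_t>0$ and $g_0\neq0$, $G^t$ is not in the span of $G^0,\dots,G^{t-1}$ whenever $U^t$ has positive conditional variance. Because $a_{t,t}\ge a_*$, $W^t$ is likewise independent of $W^0,\dots,W^{t-1}$. We will carry this as an inductive hypothesis: the limiting Gram matrices are positive definite.

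Substituting into the next iterate, $u^{t+1}=\tilde\cA^*(P_M^\perp M^t)+\sum_{s}\beta_s u^{s}+\text{error}$. The first term is a fresh Gaussian vector with variance $\alpha\|P^\perp W^t\|^2$. The Onsager term $\sum_r a_{t,r}m^r$ exactly cancels the deterministic part, using Stein's lemma to evaluate $\ip{u^s}{m^r}_n$. Symmetrically, $V^t=\tilde\cA(P^\perp_m m^t)+\sum_s\gamma_s V^s+\text{error}$. Here the first term is a GOE matrix scaled by $\|P^\perp G^t\|$ that is asymptotically free from $(V^s)_{s<t}$, since it is independent and orthogonally invariant. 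The correction $\alpha_d\sum_r d_{t,r}M^{r-1}$ cancels the term arising from $\cA\cA^*$ acting on $M^{r-1}$, by Gaussian integration by parts with $d_{t,r}$ as the empirical derivative.

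Induction on $t$ then yields~\eqref{eq:vector-SE-main} for pseudo-Lipschitz test functions, via the usual law of large numbers and the covariance recursion~\eqref{eq:covU}--\eqref{eq:covS}. It also yields convergence of $\taud(P(V,M))$: a free sum of a fresh semicircular element and a linear combination of the previous $S^s$ is again jointly semicircular with the correct covariance.

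\emph{Step 2: strong convergence.} This is the main obstacle, because error terms that are small in $\norm{\cdot}_{2,d}$ need not be small in operator norm. We will avoid this by showing that every error is small in $\|\cdot\|_{\op}$. The projection terms have rank $O(t)$ on the coefficient side, and $\|\cA\|_{\R^n\to\op}\le 2(1+\sqrt{\alpha})+o(1)$ with high probability by a Gaussian-process (Gordon-type) bound for $\sup_{\|x\|_n\le1}\|\cA(x)\|_{\op}$. Hence $\|\cA(e)\|_{\op}\lesssim\|e\|_n$ for the vector errors $e$, which tend to $0$.

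It follows that $(V^0,\dots,V^T)$ equals, up to an $o_{\op}(1)$ perturbation, a fixed linear combination of independent GOE matrices. Strong convergence of independent GOEs (Haagerup--Thorbj{\o}rnsen, Collins--Male) gives the limit, and $M^t$ is a linear image of the $V^s$.

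\emph{Step 3: terminal readout.} For bounded $h$ satisfying Assumption~\ref{ass:zero-discontinuity}, we approximate $h$ by Lipschitz functions from above and below, which is a Portmanteau argument, to get the $\widehat m$ component of~\eqref{eq:vector-SE-main}. Then~\eqref{eq:terminal-comparison-opnorm} follows from the operator-norm bound of Step 2 applied to $x=\widehat m-m^T$, together with $\|\widehat m-m^T\|_n\to\E[(H-G^T)^2]^{1/2}$. The main technical care goes into the Gordon bound and into controlling the Gram-inverse conditioning uniformly in $d$.
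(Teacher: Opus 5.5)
\textbf{The gap is the claimed uniform bound.} You assert that $\sup_{\|x\|_n\le1}\|\cA x\|_{\op}\le 2(1+\sqrt\alpha)+o(1)$. This is false: the norm is of order $\sqrt d$, as the paper remarks. Fix a unit $v\in\R^d$ and put $g_i=v^\top A_iv$. These are i.i.d.\ $N(0,2/d)$. Take $x=\sqrt n\,g/\|g\|_2$. Then $\|x\|_n=1$ and
\[
\|\cA x\|_{\op}\ge v^\top(\cA x)v=\|g\|_2\approx\sqrt{2n/d}\approx\sqrt{\alpha d}\,.
\]
Note that $x\propto\cA^*(vv^\top)$ is a single adjoint query, and no Gordon-type comparison can give a dimension-free constant: the supremum over the $n$-sphere produces a $\sqrt n$ term.

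\textbf{Step 2 survives this.} You never actually need to push small vectors through $\cA$ there. In the conditional representation, the mean-side errors are finite linear combinations of past $M^r$ with $o_{\Prob}(1)$ coefficients, and the inductive strong-convergence hypothesis bounds their operator norms. The remaining term $P_R\widetilde\cA h^t$ has rank at most $t$ and operator norm $O_{\Prob}(d^{-1/2})$: its Gaussian coefficients are of size $d^{-1}$, against basis elements of operator norm at most $\sqrt d$. With that repair, your iterative coupling to a fixed linear combination of independent GOEs plus $o_{\op}(1)$, followed by strong convergence of independent GOEs, is a valid alternative to the paper's use of the Fan--Sun--Wang theorem.

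\textbf{Step 3 does not survive.} The bound \eqref{eq:terminal-comparison-opnorm} cannot come from a worst-case norm. Two ingredients are missing.
\begin{enumerate}
\item[(a)] \emph{Adaptedness.} The vector $z=\widehat m-m^T$ is $\mathscr F_T$-measurable. The conditioning lemma then gives $\cA z\stackrel{\mathrm d}{=}Y\beta+RG_R^{-1}X^*r+P_R^\perp\widetilde\cA r$ with $r=P_Q^\perp z$. Hence $\|\cA z\|_{\op}\le C_{d,T}\|z\|_n$, with $C_{d,T}$ tight but not sharp, and this holds only for adapted inputs.
\item[(b)] \emph{The sharp constant.} Approximate $h$ in $L^2(\Law(U^0,\ldots,U^T))$ by smooth $h_a$, and run $k_a=h_a-g_T$ as an extra AMP half-step with its own Onsager correction. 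This identifies the strong limit $\cA z^{(a)}\to\widehat S_a+B_a$, where $\widehat S_a$ is semicircular with variance $\E[K_a^2]$ and $B_a=\alpha\sum_r c_{a,r}W^{r-1}$, $c_{a,r}=\E[\partial_rk_a]$. Stein's lemma gives $\tau(B_a^2)=\alpha\sum_rc_{a,r}\E[U^rK_a]$. Cauchy--Schwarz then gives $\tau(B_a^2)\le\alpha\E[K_a^2]$, so the limit has norm at most $2(1+\sqrt\alpha)\|K_a\|_{L^2}$.
\end{enumerate}
Finally, (a) transfers the bound from $h_a$ to $h$. Applying the half-step directly to $h$ is impossible, since the Onsager term needs derivatives. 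Note also that the $\sqrt\alpha$ comes from the Onsager component $B_a$, not from any singular-value bound on $\cA$.
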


The study of AMP algorithms and their limiting behavior is well established and has been instrumental in many advances in high-dimensional probability and statistics. The available results in the literature concern iterates involving a random operator $A : \R^n \to \R^m$ with vector inputs and outputs, and the state evolution results are concerned with establishing empirical convergence on both sides; see for instance~\cite{javanmard2013hypothesis,bolthausen2014iterative,berthier2017state}. In our case we are interested in tracking the spectral content of the matrix iterates; empirical convergence of their entries is not strong enough to preserve spectral information. The main novelty of our result is that spectral information can tracked in terms of a sequence of semicircular operators in a free probability space, in the strong convergence sense~\cite{voiculescu1991limit,haagerup2005new,mingo2017free,van2025strong}. In particular in addition to asking for convergence of normalized traces of arbitrary polynomials in the iterates (i.e., weak convergence~\cite{voiculescu1991limit,mingo2017free}), the convergence of operator norms ensures that the full spectrum is accounted for in the limit: no outliers eigenvalues are missed. We refer to~\cite{van2025strong} for an expository article on strong convergence. 
This opens the possibility of designing a rich variety of spectra by appropriately choosing the parameters of the AMP iteration.     
            
 We also comment on the presence of the terminal readout $\widehat m$~\eqref{eq:terminal-comparison-readouts} and its image by $\cA$ in the statement. The aforementioned classical state evolution results operating on a random data matrix $A: \R^n \to \R^m$ are stated without this terminal step since it is usually redundant: the matrix $A$ has bounded operator norm. In our case it can be verified that the norm $\|\cA\|_{(\R^n,\|\cdot\|_n)\to(\Symm,\|\cdot\|_{\op})}$ is of order $\sqrt d$. The significance of this is that rounding the last AMP iteration $m^T$ becomes a nontrivial issue. Indeed, in the matrix discrepancy application we consider below, the operator norm stability estimate~\eqref{eq:terminal-comparison-opnorm} (which does not follow from crude considerations) will allow us to round the output $m^T$ into a binary vector $\widehat m$ obtained by taking the entrywise sign: $h = \mathrm{sign} \circ g_T$ without deteriorating the operator norm guarantee. 
 
 We also mention that a stronger result which appends $\cA(\widehat m)$ to the joint strong convergence~\eqref{eq:matrix-SE-main} is possible: the limit is jointly semicircular with an explicit covariance but this will not be needed for our purposes.

\section{Application to random matrix discrepancy} 
\label{sec:matrix-discrepancy}
We now apply the above result to the matrix discrepancy problem.
Zouzias~\cite{zouzias2012matrix} and Meka~\cite{meka2014blog} asked whether there exists a finite constant $C$ such that for any sequence  $A_1,\ldots,A_n$ of $d \times d$ symmetric matrices with $\max_{i\in [n]} \|A_i\|_{\op}\le 1$, one has
\[\min_{x \in \{-1,+1\}^n}\Big\|\sum_{i=1}^n x_i A_i\Big\|_{\op} \le C \sqrt{n\max\bigl(1,\log(d/n)\bigr)}\,.\]
This is the matrix version of the more standard vector discrepancy problem where the matrices $A_i$ are replaced by vectors $v_i$ and the relevant norm is $\ell_\infty$; a special case of the above by considering diagonal matrices. The vector case is well understood and is the foundation of discrepancy minimization theory, starting with Spencer's celebrated existence result~\cite{spencer1985six}, with several efficient algorithmic constructions proposed; see for instance~\cite{bansal2013deterministic,lovett2015constructive,levy2017deterministic}. 
The matrix discrepancy problem has seen a surge of recent activity with partial results~\cite{dadush2022new,hopkins2022matrix,bansal2023resolving,bandeira2026matrix,akbas2026algebraic}, culminating in the very recent proposal of a full solution~\cite{akbas2026algebraic}. 

The \emph{random} version of this problem, proposed by~\cite{kunisky2023average} considers independent random matrices $A_1,\ldots,A_n$. The authors provide upper and lower estimates for the discrepancy as a function of the parameters $n,d$ and analyze an online algorithm yielding a discrepancy of $d 
\log n$ for a general class of random matrices; see also~\cite{kunisky2024asymptotic}. In the case of Wigner matrices, one can ask the more refined question of whether there exists a signing which compresses the semicircle law below its typical spectral radius 2, with high probability. Maillard~\cite{maillard2025average} proved the following result:
\begin{theorem}[\cite{maillard2025average}]\label{thm:maillard}
    Let $\sigma<1$ and $2n/d^2 \to \alpha$. There exists $0 < \alpha_1< \alpha_2 <\infty$ such that
    \[\min_{x \in \{-1,+1\}^n} \big\|\cA(x)\big\|_{\op} > 2\sigma \,,  \]
    with high probability if $\alpha<\alpha_1$, and  
    \[\min_{x \in \{-1,+1\}^n} \big\|\cA(x)\big\|_{\op} \le 2\sigma \,,  \]
    with high probability if $\alpha>\alpha_2$. 
    Moreover, $\alpha_1, \alpha_2 \to +\infty$ as $\sigma \to 0$, and $\alpha_2(\sigma = 1^-) \ge 11$.
\end{theorem}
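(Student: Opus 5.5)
The plan is a first-moment argument for the lower bound ($\alpha<\alpha_1$), a second-moment argument for the upper bound ($\alpha>\alpha_2$), and Gaussian concentration to upgrade the second-moment conclusion to high probability. The starting point for both bounds is the following observation. For any fixed $x\in\{-1,+1\}^n$ we have $\|x\|_n=1$, so $\cA(x)=n^{-1/2}\sum_i x_iA_i$ is exactly a GOE matrix with the normalization \eqref{eq:GOE}. For two signings $x,y$ with overlap $q=\ip{x}{y}_n$, the pair $(\cA(x),\cA(y))$ is jointly Gaussian with $\cA(y)\overset{d}{=} q\,\cA(x)+\sqrt{1-q^2}\,Z$, where $Z$ is an independent GOE matrix.

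\emph{Lower bound.} Let $Z_\sigma=\#\{x:\|\cA(x)\|_{\op}\le 2\sigma\}$. Then
\[\E Z_\sigma = 2^n\,\Prob(\|\mathrm{GOE}\|_{\op}\le 2\sigma).\]
To evaluate the probability I will use the Ben Arous--Guionnet large deviation principle for the empirical spectral measure at speed $d^2$. Confining all eigenvalues to $[-2\sigma,2\sigma]$ costs
\[\Prob(\|\mathrm{GOE}\|_{\op}\le 2\sigma)=\exp\bigl(-d^2 I(\sigma)+o(d^2)\bigr),\]
where $I(\sigma)>0$ is half the excess of the minimal logarithmic energy with quadratic potential over measures supported on $[-2\sigma,2\sigma]$, relative to the semicircle. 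This is a classical constrained equilibrium problem with an explicit solution. Since $n=\alpha d^2/2$, we get $\E Z_\sigma=\exp\bigl(d^2(\tfrac{\alpha}{2}\log 2-I(\sigma))+o(d^2)\bigr)$. Markov's inequality then gives the first claim with $\alpha_1=2I(\sigma)/\log 2$. The constrained measure on $[-2\sigma,2\sigma]$ has log-energy of order $-\log\sigma$, so $I(\sigma)\gtrsim\log(1/\sigma)\to\infty$ as $\sigma\to0$. Hence $\alpha_1\to\infty$.

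\emph{Upper bound.} I will write
\[\E Z_\sigma^2=\sum_q \binom{n}{n(1+q)/2}\, p_\sigma(q), \qquad p_\sigma(q)=\Prob\bigl(\|X\|_{\op}\le 2\sigma,\ \|qX+\sqrt{1-q^2}Z\|_{\op}\le 2\sigma\bigr),\]
with $X,Z$ independent GOE matrices. The aim is the bound $p_\sigma(q)\le \Prob(\|X\|_{\op}\le2\sigma)^2\,e^{d^2 J_\sigma(q)+o(d^2)}$, with $J_\sigma(0)=0$ and $J_\sigma(q)\le C_\sigma q^2$ near $0$. I will obtain it by conditioning on $X$ and applying a large deviation bound, at speed $d^2$, for the spectral measure of $Z$ subject to confinement of $qX+\sqrt{1-q^2}Z$. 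This is a free-convolution-type constrained energy problem.

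The binomial entropy contributes $-\tfrac{\alpha}{2}d^2 q^2/2$ near $q=0$. So for $\alpha>2C_\sigma$ the sum is dominated by $q\approx0$, and there the Gaussian sum over $q$ gives $\E Z_\sigma^2\le C\,(\E Z_\sigma)^2$. Near $|q|=1$ I will use a crude bound: the binomial coefficient becomes small while $p_\sigma(q)\le\Prob(\|X\|_{\op}\le2\sigma)$. Intermediate overlaps require $\tfrac{\alpha}{2}H((1+q)/2)-\tfrac{\alpha}{2}\log 2+J_\sigma(q)<0$, which holds for $\alpha$ large enough; this defines $\alpha_2$. Paley--Zygmund then yields $\Prob(Z_{\sigma'}>0)\ge c>0$ for any $\sigma'<\sigma$.

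\emph{From positive probability to high probability.} The map $(A_i)\mapsto\min_x\|\cA(x)\|_{\op}$ is a minimum of $O(d^{-1/2})$-Lipschitz functions of standard Gaussian coordinates, so it is itself $O(d^{-1/2})$-Lipschitz. Gaussian concentration therefore places it within $o(1)$ of its median with high probability. Combined with $\Prob(\min_x\|\cA(x)\|_{\op}\le 2\sigma')\ge c$, this gives the conclusion at level $2\sigma$ with high probability. The asymptotic $\alpha_2\to\infty$ follows from $\alpha_2\ge\alpha_1$, and the value $\alpha_2(1^-)\ge11$ comes from numerically evaluating the second-moment exponent as $\sigma\uparrow1$.

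\emph{Main obstacle.} I expect the hardest step to be the two-matrix large deviation bound $p_\sigma(q)$, and especially obtaining the sharp quadratic behaviour $J_\sigma(q)=O(q^2)$ near $q=0$. I would first derive it by perturbing the one-matrix constrained equilibrium problem in $q$, treating $qX$ as a small deformation. If that is not rigorous enough, I would fall back on a Gaussian interpolation (Slepian/Gordon-type) bound in $q$.
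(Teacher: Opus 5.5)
\emph{Assessment.} Your first-moment half is sound. Your overall architecture (first moment, plus a second moment built on large deviations of correlated GOE spectra) is exactly what the paper attributes to Maillard; the paper itself contains no proof of this theorem. However, the second-moment half as written has a genuine gap.

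\textbf{The second-moment step does not give a constant ratio.} Every estimate you propose for $p_\sigma(q)$ is at the large-deviation scale, $p_\sigma(q)\le \Prob(\|X\|_{\op}\le 2\sigma)^2e^{d^2J_\sigma(q)+o(d^2)}$. Summing such bounds yields only $\E Z_\sigma^2\le e^{o(d^2)}(\E Z_\sigma)^2$. Paley--Zygmund then gives $\Prob(Z_\sigma>0)\ge e^{-o(d^2)}$, not $\ge c$. The claimed "Gaussian sum over $q$ gives $\E Z_\sigma^2\le C(\E Z_\sigma)^2$" fails exactly where it matters. The weights $\binom{n}{n(1+q)/2}2^{-n}$ are concentrated on $|q|\lesssim n^{-1/2}\asymp d^{-1}$. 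There $d^2J_\sigma(q)=O(1)$ is invisible under the $o(d^2)$ error, so the whole second-moment ratio is decided in a window the LDP cannot resolve. Generic Gaussian tools do not rescue this. For instance, hypercontractivity gives $p_\sigma(q)\le\Prob(\|X\|_{\op}\le2\sigma)^{2/(1+|q|)}$, i.e.\ a gain $e^{2d^2I(\sigma)|q|/(1+|q|)}$. This is linear rather than quadratic in $|q|$, and against the entropy $-\tfrac{\alpha}{4}d^2q^2$ it still leaves a factor $e^{\Theta(d^2)}$.

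\textbf{The concentration step cannot absorb this loss.} $\min_x\|\cA(x)\|_{\op}$ is only $\sqrt{2/d}$-Lipschitz in the underlying Gaussian coordinates, so Gaussian concentration holds at speed $d$. It therefore upgrades only lower bounds of the form $\Prob(\min_x\|\cA(x)\|_{\op}\le 2\sigma')\ge e^{-o(d)}$. A bound like $e^{-d^{3/2}}$, which is compatible with $e^{-o(d^2)}$, is useless. So the plan is missing a bridge between speed $d^2$ and speed $d$. There are two ways to supply it:
\begin{itemize}
\item a genuinely sharper analysis showing $p_\sigma(q)/p_\sigma(0)\le e^{C d^2q^2+o(d)}$ uniformly for small $|q|$. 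This is precision far beyond any LDP, and even standard Coulomb-gas expansions carry $O(d\log d)$ terms that would have to cancel;
\item a boosting mechanism that works at speed $d^2$. For example, use concentration of a smoothed spectral functional such as $\min_x\frac1d\Tr\,\psi(\cA(x))$, which is $O(1/d)$-Lipschitz. This must be paired with a separate argument excluding outlier eigenvalues for the selected $x$. That argument is nontrivial, since an outlier costs only $e^{-\Theta(d)}$ per $x$ while there are $e^{\Theta(d^2)}$ candidates.
\end{itemize}

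\textbf{Smaller points.}
\begin{itemize}
\item Certifying $\alpha_2(1^-)\ge 11$ requires showing that the second-moment exponent is positive for some $q$ when $\alpha<11$. That needs matching LDP \emph{lower} bounds on $p_\sigma(q)$, not only the upper bound $J_\sigma$ you plan.
\item Your formula for $\E Z_\sigma^2$ is missing a factor $2^n$.
\item The small-$q$ condition is $\alpha>4C_\sigma$, not $2C_\sigma$.
\end{itemize}
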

His proof relies on the first and second moment methods, where one needs to control large deviation events on the spectra of correlated Gaussian random matrices. He asked whether the positive result can be achieved algorithmically (\cite[Open Question 1.4]{maillard2025average}). Our result provides a positive answer:         
let 
\begin{align}
 \sigma^2(\alpha)
 &:=
 1-\frac{2\sqrt\alpha}{\pi}
 B\left(\frac12,\frac34\right)
 +\frac{2\alpha}{\pi}\,,
 \label{eq:sigma-one-endpoint}
\end{align}
where $B$ is the Beta function with
\[
B\left(\frac12,\frac34\right) =  \int_0^1\sqrt{t}(1-t)^{-1/4}\,\rmd t = 2.396280469\ldots
\]

\begin{theorem}
\label{thm:random_matrix_discrepancy}
Assume $2n/d^2\to\alpha\in(0,\infty)$.  For every
$\varepsilon>0$, there exists an efficient algorithm which linearly queries $\cA$ and its adjoint $\cA^*$ and outputs a sign vector $\widehat x\in\{-1,+1\}^n$ such that
\begin{equation}\label{eq:binary-final-bound}
 \Prob\big(
 \|\cA(\widehat x)\|_{\op}
 \le 2\sigma_1(\alpha)+\varepsilon
 \big)\longrightarrow 1\,.
\end{equation}
The number of calls to $\cA$ and $\cA^*$ is bounded by a constant
$N=N(\alpha,\varepsilon)$ independent of $d$.
\end{theorem}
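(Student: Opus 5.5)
The plan is to run a discretized, incremental instance of the AMP iteration \eqref{eq:AMP0}--\eqref{eq:AMP1}, in the spirit of incremental/Hessian-ascent AMP. Each coordinate $m_i^t$ evolves as a discretized martingale driven by the Gaussian increments $u_i^{t+1}$, and is frozen once it reaches $\pm1$. We then read off $\cA(m^T)$ from the matrix side of \cref{thm:main-general}, and round with the terminal estimate \eqref{eq:terminal-comparison-opnorm}.

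The key algebraic observation comes from \eqref{eq:AMP1}:
\[
\cA(m^T)=V^T+\alpha_d\sum_{r=0}^T d_{T,r}M^{r-1}.
\]
This is a fixed linear combination of the $V$'s and $M$'s, and the coefficients $d_{T,r}$ converge in probability to $\E[\partial_r G^T]$ by the empirical part \eqref{eq:vector-SE-main}. Hence by \eqref{eq:matrix-SE-main}, applied to the degree-one polynomial,
\[
\|\cA(m^T)\|_{\op}\to \big\|S^T+\alpha\textstyle\sum_r \E[\partial_rG^T]\,W^{r-1}\big\|.
\]
The $W$'s are linear in the $S$'s, so the element inside the norm is a linear combination of a semicircular family, hence itself semicircular. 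Its norm is therefore $2\Sigma_T$, where $\Sigma_T^2$ is its variance, computable from \eqref{eq:covU}--\eqref{eq:covS}.

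The design problem is to choose $a_{t,s}$ and $g_t$ so that two things hold at once. First, $G^T$ should be within $L^2$-distance $\delta$ of $\{\pm1\}$. Second, $\Sigma_T^2$ should be close to $\sigma^2(\alpha)$. I would take $g_t=g_{t-1}+c_t(\cdot)\,x^t$ with step size $\varepsilon_0$, which satisfies \cref{ass:transversality}. Here $c_t$ is a smoothed version of a step-size profile $\sqrt{\varepsilon_0}\,\phi(m^{t-1})\mathbf 1\{|m^{t-1}|<1-\delta\}$, and $a_{t,s}$ is chosen so that the $W^t$ are orthogonalized increments, with $a_{t,t}$ bounded away from $0$ as \cref{ass:a1} requires. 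In the limit $\varepsilon_0\to0$ the scalar process $G^t$ becomes a diffusion on $[-1,1]$, and the Onsager term $\alpha\sum_r\E[\partial_rG^T]W^{r-1}$ becomes a stochastic integral against the free increments. The variance $\Sigma^2$ then becomes an explicit functional of the profile $\phi$: a ``$1$'' from $\E[(G^T)^2]\approx1$, a negative cross term linear in $\sqrt\alpha$ from the correlation between $S^T$ and the Onsager sum, and a positive $\alpha$ term from the Onsager sum itself.

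Minimizing this functional over $\phi$ should give a one-endpoint solution. The cross term should reduce to $\int_0^1\sqrt t\,(1-t)^{-1/4}\,\rmd t=B(\tfrac12,\tfrac34)$, yielding the formula \eqref{eq:sigma-one-endpoint}. For rounding, take $h=\mathrm{sign}\circ g_T$, defined arbitrarily at $0$. The law of $G^T$ has no atom at $0$ because $U^T$ is Gaussian and $c_T>0$, so \cref{ass:zero-discontinuity} holds. Then $\E[(H-G^T)^2]\le C\delta$, and \eqref{eq:terminal-comparison-opnorm} gives
\[
\|\cA(\widehat x)\|_{\op}\le 2\Sigma_T+2(1+\sqrt\alpha)\sqrt{C\delta}+\eta.
\]
Choosing $\delta$, $\varepsilon_0$ and $\eta$ small relative to $\varepsilon$ gives \eqref{eq:binary-final-bound}, with $N=O(T)$ queries, where $T$ depends only on $(\alpha,\varepsilon)$.

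The main obstacle I expect is the variance computation and its continuum limit. This means tracking the recursion \eqref{eq:covU}--\eqref{eq:covS} for the incremental scheme, showing that the discrete $\Sigma_T^2$ converges to the continuous functional uniformly enough as $\varepsilon_0\to0$, and solving the resulting variational problem to identify the optimizer and the Beta integral. A secondary issue is that the freezing rule is non-smooth. I would first smooth the indicator so that $g_t$ is $C^1$ and pseudo-Lipschitz as \cref{ass:g} requires, and only then remove the smoothing inside the error budget.
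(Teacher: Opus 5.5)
Your scaffolding matches the paper: an incremental AMP whose vector side is a discretized martingale, the identity $\cA(m^T)=V^T+\alpha_d\sum_r d_{T,r}M^{r-1}$ read through strong convergence as a semicircular element of norm $2\Sigma_T$, and sign rounding through \eqref{eq:terminal-comparison-opnorm}. The gap is the step that produces the constant $\sigma_1(\alpha)$: you never fix a control and compute $\Sigma^2$ for it. You only assert that minimizing the continuum functional over $\phi$ ``should'' give a solution whose cross term is the Beta integral. That is not an argument, and it is not the right target either. The paper does not obtain $\sigma_1(\alpha)$ as an optimum; it says explicitly that it makes no attempt to minimize. Moreover, $\int(\sqrt{\E c_t^2}-\sqrt\alpha\,\E c_t)^2\,\rmd t=\int_0^1(1-\sqrt\alpha\,\kappa)^2\,\rmd v$ in the variance clock $v=\E[G_t^2]$, with $\kappa=\E c_t/(\E c_t^2)^{1/2}$. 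Different admissible controls give different profiles $\kappa(v)$ (for $\alpha>1$, any stretch with $\kappa=1/\sqrt\alpha$ contributes nothing), so nothing singles out the value involving $B(\tfrac12,\tfrac34)$. Incidentally, $\int_0^1\sqrt t\,(1-t)^{-1/4}\,\rmd t=B(\tfrac32,\tfrac34)$; the integral that actually arises is $\int_0^1(1-t^2)^{-1/4}\,\rmd t=\tfrac12B(\tfrac12,\tfrac34)$. Your rounding bound $\E[(H-G^T)^2]\le C\delta$ is also unverified: it needs almost all mass to be frozen by time $T$. A freezing indicator also conflicts with $c_t>0$ in Assumption~\ref{ass:transversality} unless you add a positive floor.

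What is missing is a concrete control with closed-form moments. The paper uses $g(t,x)=\E[\mathrm{sign}(B_1)\mid B_t=x]=2\Phi(x/\sqrt{1-t})-1$. This solves the backward heat equation, so $G_t=g(t,B_t)$ is a martingale with $\rmd G_t=b(t,B_t)\,\rmd B_t$, $b=\partial_x g$, and $G_1=\mathrm{sign}(B_1)$. It takes $b_j=b(\delta j,U^j)$, a function of the Brownian position, together with $a_j=-1/\sqrt{\alpha\E[b_j^2]}$. This $b$ is smooth, bounded and strictly positive for $t\le q<1$, so no freezing or smoothing is needed. In feedback form it equals $\frac{2}{\sqrt{1-t}}\varphi\bigl(\Phi^{-1}(\tfrac{1+G_t}{2})\bigr)$, a control of the kind you propose, with an explicit time factor. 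The identities $\E[b(t,B_t)]=\sqrt{2/\pi}$ and $\E[b(t,B_t)^2]=2/(\pi\sqrt{1-t^2})$ give $\sigma_q^2=\frac2\pi\bigl[\arcsin q-2\sqrt\alpha\int_0^q(1-t^2)^{-1/4}\,\rmd t+\alpha q\bigr]$. A discretization lemma bounds the gap to the discrete variance by $C(\alpha,q)\delta$, using that $b(t,B_t)$ is itself a martingale. The rounding error is computed exactly from Gaussian sign correlations: $e(q)=1+\frac2\pi\arcsin q-\frac4\pi\arcsin\sqrt q\to0$ as $q\to1$. Sending $d\to\infty$, then $\delta\to0$, then $q\to1$ yields $2\sigma_1(\alpha)$. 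Without committing to such a control, your argument shows only that the output has norm about $2\Sigma_T$ for some $\Sigma_T$, not the bound \eqref{eq:binary-final-bound}.
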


Our result is non-trivial for any 
\[0<\alpha<\alpha_* := B\left(\frac12,\frac34\right)^2 = 5.74216\ldots\]
and covers an interval in the neighborhood of zero,  disjoint from the positive result in Theorem~\ref{thm:maillard}.     
The algorithm relies on an incremental version of the AMP iteration studied in the previous section. This approach is similar to, albeit much simpler than, the one developed in the context of the random perceptron model~\cite{alaoui2022algorithmic,huang2026algorithmic}. Incremental AMP was originally developed in the context of optimizing spin-glass Hamiltonians~\cite{subag2018following,montanari2021optimization,ams2020}, and has been shown to find an approximate ground state under an analytic assumption on a certain variational formula describing the structure of the near optimizers, known as full replica-symmetry breaking. In the present case, our result is unconditional since it uses a simpler control: we only aim for a non-trivial bound and do not attempt to achieve the minimum possible operator norm. 
The next subsections describe the incremental aspect with its continuum limit, the rounding step which produces a binary vector from the endpoint of a controlled martingale, then proves Theorem~\ref{thm:random_matrix_discrepancy}.

\subsection{Incremental AMP}\label{subsec:incremental_amp}
We initialize the iteration with $u^0 = 0$, $g_0 \equiv \sqrt{\delta}$ for a fixed $\delta>0$. (In particular, $U^0 \equiv 0$.) We consider the (non-)linearities
\begin{align}\label{eq:choice_non_linearities}
m^{\ell} &:= g_{\ell}(u^0,\cdots,u^\ell) =  \sqrt{\delta} + \sum_{j=0}^{\ell-1} b_j(u^0,\cdots,u^j) (u^{j+1}-u^{j})\,, \qquad \ell\ge1, \\ 
M^0&:=a_{-1}V^0,\qquad
M^{\ell} :=a_{-1}V^0+\sum_{j=0}^{\ell-1} a_j (V^{j+1}-V^{j})\,,
\qquad \ell\ge1.
\end{align}
This choice is closely related to \cite{ams2020}.  
Here $a_{-1},a_0, a_1, \ldots$ are fixed nonzero scalar coefficients and $b_j:\R^{j+1}\to \R$ are bounded differentiable functions to be chosen later, with $b_{-1}:=1$. 
In the notation of \eqref{eq:AMP0}, the corresponding triangular
coefficient array is
\begin{equation}\label{eq:incremental-coefficient-array}
 a_{0,0}=a_{-1},
 \qquad
 a_{t,s}=a_{s-1}-a_s\ (0\le s<t)\,, 
 \qquad
 a_{t,t}=a_{t-1}\quad(t\ge1)\,.
\end{equation}
This choice implies the following orthogonal increments property: 
\begin{proposition}\label{prop:orthogonal_increments}
	Let $(U^0,\ldots,U^{T+1},S^0,\ldots,S^T)$ be the limiting process with covariances specified by \eqref{eq:covU}--\eqref{eq:covS} with the above choices. Then for every $\ell\ge 0$, $U^{\ell+1} - U^{\ell}$ is independent of $U^0, \ldots, U^{\ell}$ and $S^{\ell+1} - S^{\ell}$ is free from $S^0, \ldots, S^{\ell}$. More specifically,
	\begin{align}\label{eq:covariance_independent_increments}
\E\big[ (U^{\ell+1} - U^{\ell}) U^j \big] = 0 \,,\qquad\qquad \taup\big( (S^{\ell+1} - S^{\ell}) S^j \big) = 0 \,~~~\forall\, j \le \ell\,.
\end{align}
Moreover,
 \begin{align}\label{eq:variance_increments}
\E\big[ (U^{1} - U^{0})^2 \big]
&= \alpha a_{-1}^2\taup((S^0)^2)=\alpha a_{-1}^2\delta,\notag\\
\E\big[ (U^{\ell+1} - U^{\ell})^2 \big]
&= \alpha a_{\ell-1}^2
\taup\big( (S^{\ell} - S^{\ell-1})^2 \big)\,,\\
\taup \big( (S^{\ell} - S^{\ell-1})^2 \big)
&= \E[b_{\ell-1}^2]\cdot\E\big[ (U^{\ell} - U^{\ell-1})^2 \big],
\qquad \ell\ge1\,.\notag
\end{align}
(In the above, $b_{\ell-1}$ is a shorthand for $b_{\ell-1}(U^0,\ldots,U^{\ell-1})$.)
\end{proposition}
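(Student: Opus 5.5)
The proof is a joint induction on $\ell$ that alternates between the two covariance recursions \eqref{eq:covU}--\eqref{eq:covS}. First I would rewrite both limiting objects in increment form. By \eqref{eq:incremental-coefficient-array},
\[
W^0=a_{-1}S^0,\qquad W^\ell=a_{-1}S^0+\sum_{j=0}^{\ell-1}a_j(S^{j+1}-S^j),
\]
so that $W^\ell-W^{\ell-1}=a_{\ell-1}(S^\ell-S^{\ell-1})$ for $\ell\ge1$. Likewise, by \eqref{eq:choice_non_linearities},
\[
G^0=\sqrt\delta,\qquad G^{\ell+1}-G^\ell=b_\ell(U^0,\ldots,U^\ell)\,(U^{\ell+1}-U^\ell).
\]
Also, $U^0\equiv0$ here. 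The induction hypothesis at stage $\ell$ is the $U$-part of \eqref{eq:covariance_independent_increments} at index $\ell$ and the $S$-part at indices $<\ell$.

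\textbf{$U$-step.} Fix $1\le j\le\ell$. Using \eqref{eq:covU},
\[
\E[(U^{\ell+1}-U^\ell)U^j]=\alpha\,\taup\big((W^\ell-W^{\ell-1})W^{j-1}\big)=\alpha a_{\ell-1}\,\taup\big((S^\ell-S^{\ell-1})W^{j-1}\big).
\]
Since $W^{j-1}$ is a linear combination of $S^0,\ldots,S^{j-1}$ with $j-1\le\ell-1$, this vanishes by the $S$-part of the hypothesis at index $\ell-1$. The case $j=0$ is trivial because $U^0$ is independent of $(U^1,\ldots,U^{T+1})$. For $\ell=0$ there is nothing to check beyond this independence.

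The vector $(U^1,\ldots,U^{\ell+1})$ is centered jointly Gaussian and independent of $U^0$. Zero covariance therefore upgrades to independence of $U^{\ell+1}-U^\ell$ from $(U^0,\ldots,U^\ell)$.

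\textbf{$S$-step.} Fix $j\le\ell$. Using \eqref{eq:covS},
\[
\taup\big((S^{\ell+1}-S^\ell)S^j\big)=\E\big[b_\ell(U^0,\ldots,U^\ell)\,(U^{\ell+1}-U^\ell)\,G^j\big].
\]
Here $G^j$ is a function of $U^0,\ldots,U^j$. By the independence just established, the expectation factorizes as $\E[b_\ell G^j]\cdot\E[U^{\ell+1}-U^\ell]=0$. Boundedness of $b_\ell$ and the moment bounds from Assumption~\ref{ass:g} ensure all expectations are finite.

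For freeness, I would use the standard fact that a centered semicircular family is the image of a free standard semicircular system under a linear map. Two subfamilies spanning orthogonal subspaces in $L^2(\tau)$ are then free, the free analogue of "uncorrelated jointly Gaussian implies independent." Concretely, I would apply a Gram--Schmidt change of basis to exhibit $S^{\ell+1}-S^\ell$ and $\operatorname{span}(S^0,\ldots,S^\ell)$ as generated by disjoint sets of free semicirculars. This step is the only one that is not pure bookkeeping, so I expect it to be the main obstacle; I would cite \cite{mingo2017free} for it.

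\textbf{Variances.} For the base case, $\E[(U^1)^2]=\alpha\,\taup((W^0)^2)=\alpha a_{-1}^2\,\taup((S^0)^2)$, and $\taup((S^0)^2)=\E[(G^0)^2]=\delta$. For $\ell\ge1$, the first recursion gives
\[
\E[(U^{\ell+1}-U^\ell)^2]=\alpha\,\taup\big((W^\ell-W^{\ell-1})^2\big)=\alpha a_{\ell-1}^2\,\taup\big((S^\ell-S^{\ell-1})^2\big).
\]
The second gives
\[
\taup\big((S^\ell-S^{\ell-1})^2\big)=\E\big[b_{\ell-1}^2\,(U^\ell-U^{\ell-1})^2\big],
\]
which factorizes into $\E[b_{\ell-1}^2]\,\E[(U^\ell-U^{\ell-1})^2]$ by the independence of $U^\ell-U^{\ell-1}$ from $(U^0,\ldots,U^{\ell-1})$.
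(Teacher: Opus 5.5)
Your proposal is correct and follows the paper's proof essentially step for step. It uses the same interleaved induction through \eqref{eq:covU}--\eqref{eq:covS}, upgrades zero covariance to independence by joint Gaussianity and to freeness by the semicircular-family fact, and reads the variances off $W^\ell-W^{\ell-1}=a_{\ell-1}(S^\ell-S^{\ell-1})$ and $G^{\ell}-G^{\ell-1}=b_{\ell-1}(U^\ell-U^{\ell-1})$. The only differences are cosmetic: you feed orthogonality rather than freeness back into the $U$-step, and you expand the squared increments by bilinearity instead of pairing them with $U^{\ell+1}$ and $S^\ell$, which is slightly more economical but changes nothing substantive.
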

\begin{proof}
We first prove that for every $\ell\ge 0$, $U^{\ell+1} - U^{\ell}$ is independent of $U^0, \ldots, U^{\ell}$ and $S^{\ell+1} - S^{\ell}$ is free from $S^0, \ldots, S^{\ell}$ and establish~\eqref{eq:covariance_independent_increments} by induction on $\ell$. The base case is $\ell=0$. Since $U^0 \equiv 0$ due to the choice of initialization, we obtain $\E\big[ (U^1- U^0) U^0 \big] = 0$ as $U^1$ is centered and independent from $U^0$. Furthermore as $U^0 = 0$, independence of $U^1 - U^0$ and $U^0$ follows. The base case for $S^1-S^0$ then follows from independence of $U^1-U^0 \equiv U^1$ and $U^0$, via an analogous argument as the inductive step for $S^{\ell+1}-S^\ell$ which is presented next.

For the inductive step, consider any $\ell \ge 1$ and any $j, 0 \le j \le \ell$. We suppose~\eqref{eq:covariance_independent_increments} holds for $\ell-1$ and all $j', 0 \le j' \le \ell-1$. The covariance definition~\eqref{eq:covU} gives 
\begin{align*}
\E\big[ (U^{\ell+1} - U^\ell) U^j \big] = \alpha \taup\big( (W^{\ell} - W^{\ell-1}) W^{j-1}\big) = \alpha \taup\big( a_{\ell-1} (S^\ell - S^{\ell-1}) W^{j-1} \big)\,.
\end{align*}
By the inductive hypothesis, $S^\ell - S^{\ell-1}$ is free from $S^0, \ldots, S^{\ell-1}$. Therefore as $S^\ell - S^{\ell-1}$ is centered it follows that 
\begin{align*}
\E\big[ (U^{\ell+1} - U^\ell) U^j \big] = 0\,.    
\end{align*}
As the $(U^1, U^2, \ldots )$ are jointly Gaussian, it follows that $U^{\ell+1} - U^\ell$ is independent from $U^0, \ldots, U^\ell$. This implies that 
\begin{align*}
\taup\big( (S^{\ell+1} - S^\ell) S^j \big) = \E\big[ (G^{\ell+1} - G^\ell) G^j \big] = \E\big[ b_\ell \cdot (U^{\ell+1} - U^\ell) \cdot g_\ell(U^0, \ldots, U^\ell) \big] = 0\,.
\end{align*}
As the $(S^0, S^1, \ldots)$ form a centered semicircular family, it follows that $S^{\ell+1}-S^\ell$ is free from $S^0, \ldots, S^\ell$. This completes the induction, proving~\eqref{eq:covariance_independent_increments}.

We now prove~\eqref{eq:variance_increments}. We first consider the first statement of~\eqref{eq:variance_increments}. By independence of $U^1-U^0$, $U^1$ from $U^0$ and the recursive definition~\eqref{eq:covU} of the covariances, we have 
\begin{align*}
\E\big[ (U^{1} - U^{0})^2 \big] = \E\big[ U^1(U^1 - U^0) \big] = \E\big[ (U^1)^2 \big] = \alpha a^2_{-1} \taup((S^0)^2)\,.
\end{align*}
For the second statement of~\eqref{eq:variance_increments}, considering $\ell \ge 1$ and as $U^{\ell+1}-U^\ell$ is independent from $U^\ell$, we have by~\eqref{eq:covU},
\begin{align*}
\E\big[ (U^{\ell+1} - U^{\ell})^2 \big] = \E\big[ (U^{\ell+1} - U^\ell) U^{\ell+1} \big] &= \alpha \taup\big( (W^\ell - W^{\ell-1}) W^\ell \big) \\
&= \alpha \taup\big( a_{\ell-1} (S^\ell - S^{\ell-1}) W^\ell \big) \\
&= \alpha a^2_{\ell-1} \taup\big( (S^\ell - S^{\ell-1})^2 \big)\,.
\end{align*}
Here we used that $S^\ell - S^{\ell-1}$ is centered and free from $S^0, \ldots, S^{\ell-1}$ and the definition of the $a_{t,s}$ from~\eqref{eq:incremental-coefficient-array}. Similarly for the last statement of~\eqref{eq:variance_increments}, as $S^{\ell} - S^{\ell-1}$ is free from $S^{\ell-1}$, we have by~\eqref{eq:covS},
\begin{align*}
\taup \big( (S^{\ell} - S^{\ell-1})^2 \big) = \taup\big( (S^{\ell} - S^{\ell-1}) S^\ell \big) &= \E\big[ (G^\ell - G^{\ell - 1}) G^\ell \big] \\
&= \E\big[  b_{\ell-1} \cdot (U^\ell - U^{\ell-1}) \cdot g_\ell(U^0, \ldots, U^\ell) \big] \\
&= \E\big[  b_{\ell-1}^2 \cdot (U^\ell - U^{\ell-1})^2 \big] = \E[b_{\ell-1}^2]\cdot\E\big[ (U^{\ell} - U^{\ell-1})^2 \big]\,.
\end{align*}
Here we used that $U^\ell - U^{\ell-1}$ is centered and independent from $U^0, \ldots, U^{\ell-1}$ and the definition of $g_\ell$ in~\eqref{eq:choice_non_linearities}. 
\end{proof}

Combining the above two variance relations, we impose the condition
\begin{align}\label{eq:variance_preserve_discretetime}
\alpha  a_{\ell-1}^2 \E[(b_{\ell-1})^2] = 1
\quad \forall \ell \ge 0\,.
\end{align}
It follows that $U^0,\ldots, U^\ell$ is a discretized Brownian motion at time intervals $\delta$, and $S^0,\ldots, S^\ell$ is a discretized free Brownian motion up to a time change:  
 \begin{equation}\label{eq:discretized_bm}
\E\big[ (U^{\ell} - U^{\ell-1})^2 \big] = \delta \,,\qquad\qquad 
\taup \big( (S^{\ell} - S^{\ell-1})^2 \big) = \E[b_{\ell-1}^2]\, \delta\,,~~~\forall\, \ell \ge 1\,.
\end{equation}

Next, we state the limits of $m^\ell$ and $\cA(m^\ell)$ implied by the above structure. 
\begin{proposition}\label{prop:m_A_limits}
	For every $\ell \ge 1$ we have 
	\begin{equation}\label{eq:m_A_limits}
	m^\ell \,\xrightarrow{\mathrm{emp}}\, \sqrt{\delta} + \sum_{j=0}^{\ell-1} b_{j} (U^{j+1} - U^j)\,,~~~
	\mbox{and}~~~\cA(m^\ell) \,\xrightarrow{\mathrm{str}}\, Y^{\ell} := S^{\ell} + \alpha \sum_{j=0}^{\ell-1} \E[b_{j}] a_{j-1} \big(S^j - S^{j-1}\big)\,.
	\end{equation} 
	In particular, under condition~\eqref{eq:variance_preserve_discretetime}, $Y^\ell$ is centered semicircular with variance 
	\begin{equation}\label{eq:semicircular_variance}
		\tau\big((Y^\ell)^2\big) =( \sigma^\ell)^2
		:=  \delta \E[b_{\ell-1}^2] + \delta \sum_{j=0}^{\ell-1}\big(1 + \alpha \E[b_j]a_{j-1}\big)^2 \,\E[b_{j-1}^2]\,.
	\end{equation}
	Consequently,  $\|Y^{\ell}\| = 2\sigma^\ell$.
\end{proposition}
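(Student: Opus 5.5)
The plan is to read off both limits in \eqref{eq:m_A_limits} from Theorem~\ref{thm:main-general}, and then identify the limit of the Onsager term by a summation by parts.

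\emph{Hypotheses and the vector side.} I first check that the incremental choices satisfy the hypotheses of Theorem~\ref{thm:main-general}.
\begin{itemize}
\item The coefficient array \eqref{eq:incremental-coefficient-array} is deterministic, with $a_{t,t}=a_{t-1}\neq0$, so Assumption~\ref{ass:a1} holds on a finite horizon.
\item $g_0\equiv\sqrt\delta\ne0$.
\item $g_t$ has the form \eqref{eq:transverse-form} with $c_t=b_{t-1}$. This requires $b_{t-1}>0$, which I will impose on the admissible $b_j$ (bounded, $C^1$, positive, with pseudo-Lipschitz derivatives), so that Assumption~\ref{ass:g} also holds.
\end{itemize}
With these in place, \eqref{eq:vector-SE-main} gives $m^\ell\xrightarrow{\mathrm{emp}} G^\ell=\sqrt\delta+\sum_{j<\ell}b_j(U^{j+1}-U^j)$. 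This is the first claim.

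\emph{Limits of the Onsager coefficients.} By \eqref{eq:AMP1}, $\cA(m^\ell)=V^\ell+\alpha_d\sum_{r=0}^{\ell} d_{\ell,r}M^{r-1}$. Differentiating the incremental form gives, for $r<\ell$,
\[
\partial_r g_\ell=b_{r-1}-b_r+\sum_{j>r}\partial_r b_j\,(u^{j+1}-u^j),
\qquad\text{and}\qquad \partial_\ell g_\ell=b_{\ell-1}.
\]
These are pseudo-Lipschitz, so \eqref{eq:vector-SE-main} yields $d_{\ell,r}\xto{p}\E[b_{r-1}]-\E[b_r]$ for $r<\ell$ and $d_{\ell,\ell}\xto{p}\E[b_{\ell-1}]$. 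The cross terms vanish because, by Proposition~\ref{prop:orthogonal_increments}, $U^{j+1}-U^j$ is centered and independent of $(U^0,\dots,U^j)$.

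Write $\beta_j=\E[b_j]$, with $\beta_{-1}=1$ and $W^{-1}=0$. Summing by parts,
\[
\sum_{r=0}^{\ell}(\beta_{r-1}-\beta_r\ind_{r<\ell})W^{r-1}=\sum_{j=0}^{\ell-1}\beta_j(W^j-W^{j-1}).
\]
From the coefficient array, $W^j-W^{j-1}=a_{j-1}(S^j-S^{j-1})$, with $S^{-1}:=0$. This produces exactly the correction term in $Y^\ell$.

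\emph{Strong convergence with random coefficients.} The matrix $\cA(m^\ell)$ is a linear combination of $V^\ell,M^0,\dots,M^{\ell-1}$ whose coefficients $\alpha_d d_{\ell,r}$ converge in probability to deterministic constants. Any polynomial in $\cA(m^\ell)$ together with the iterates is therefore a polynomial with finitely many random coefficients that converge in probability. Normalized traces and operator norms of such a polynomial are locally Lipschitz in the coefficients, on the event where all $\|V^t\|_{\op},\|M^t\|_{\op}$ stay bounded. That event has probability tending to one by \eqref{eq:matrix-SE-main}. Replacing the random coefficients by their limits thus costs $o_p(1)$, and \eqref{eq:matrix-SE-main} gives $\cA(m^\ell)\xrightarrow{\mathrm{str}}Y^\ell$.

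\emph{Variance and norm.} Write
\[
Y^\ell=(S^\ell-S^{\ell-1})+\sum_{j=0}^{\ell-1}(1+\alpha\beta_ja_{j-1})(S^j-S^{j-1}).
\]
This is a real linear combination of a centered semicircular family, hence centered semicircular. By Proposition~\ref{prop:orthogonal_increments} the increments are mutually free, in particular orthogonal in $\tau$, so the variance is the sum of the squared coefficients times the increment variances. By \eqref{eq:discretized_bm}, $\tau((S^j-S^{j-1})^2)=\delta\E[b_{j-1}^2]$, and this also holds for $j=0$ since $\tau((S^0)^2)=\E[(G^0)^2]=\delta=\delta\E[b_{-1}^2]$. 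This gives \eqref{eq:semicircular_variance}. A semicircular element of variance $s^2$ has norm $2s$, so $\|Y^\ell\|=2\sigma^\ell$.

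The main obstacle is the third step: passing strong convergence through the random Onsager coefficients $d_{\ell,r}$, and making sure their limits are exactly the constants above (so that the summation by parts aligns the indices). Verifying the transversality assumption is a secondary point; it forces positivity of the $b_j$.
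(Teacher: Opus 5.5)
Your proposal is correct and is essentially the paper's proof: state evolution for the vector side, limits of the Onsager coefficients $d_{\ell,r}$ with the cross terms killed by the centered independent increments, summation by parts through the increment relation $M^j-M^{j-1}=a_{j-1}(V^j-V^{j-1})$, and orthogonality of the free increments for the variance; you also spell out the random-coefficient step and the requirement $c_t=b_{t-1}>0$, which the paper leaves implicit. One minor slip: the cross term in $\partial_r g_\ell$ runs over $j\ge r$, not $j>r$, since $b_r$ depends on $u^r$ (for the later choice $b_j=b(\delta j,u^j)$ it is the only nonzero cross term), but it vanishes in the limit for exactly the same reason.
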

\begin{proof}
The first statement is a direct consequence of state evolution, Theorem~\ref{thm:main-general}. The second statement can be extracted from the AMP iteration:
\begin{equation}\label{eq:A0}
	\cA(m^\ell) = V^{\ell} + \alpha_d \sum_{j=1}^\ell d_{\ell,j} M^{j-1}\,,
\end{equation}
  with 
\begin{align}
d_{\ell,j} &= \frac{1}{n} \sum_{i=1}^n \partial_{j}g_{\ell}(u_i^0,\ldots,u_i^\ell)\\
&=
\begin{cases}
\Av_n(b_{\ell-1}) & \mbox{if }~~ j=\ell\,,\\
\Av_n(b_{j-1}) - \Av_n(b_j) + \Delta_j & \mbox{if }~~ j\le\ell-1\,,
\end{cases}
\end{align}
where, for any function $b : \R^{q+1}\to \R$, we used the notation 
\[\Av_n(b) := \frac{1}{n} \sum_{i=1}^n b(u_i^0,\ldots,u_i^q)\,,~~~\mbox{and}~~~ \Delta_j := \sum_{s = j}^{\ell-1} \Av_n\Big(\partial_j b_s \cdot (u^{s+1} - u^s)\Big)\,.\]

From~\eqref{eq:A0} we obtain
\begin{align}\label{eq:A1}
\cA(m^\ell) &= V^{\ell} + \alpha_d  \Av(b_{\ell-1}) M^{\ell-1} + \alpha_d \sum_{j=1}^{\ell-1} \big(\Av_n(b_{j-1}) - \Av_n(b_j) + \Delta_j\big) M^{j-1}\,,\notag\\
&= V^{\ell} + \alpha_d \sum_{j=0}^{\ell-1} \Av_n(b_{j}) \big(M^j - M^{j-1}\big) + \alpha_d \sum_{j=1}^{\ell-1}  \Delta_j M^{j-1}\,,\notag\\
&= V^\ell + \alpha_d \sum_{j=0}^{\ell-1} \Av_n(b_{j}) a_{j-1} \big(V^j - V^{j-1}\big) + \alpha_d \sum_{j=1}^{\ell-1}  \Delta_j M^{j-1}\,,
\end{align}
where we used $V^{-1}=M^{-1}=0$ and the relation $M^{j}-M^{j-1} = a_{j-1}(V^{j}-V^{j-1})$. 
State evolution applied to the pseudo-Lipschitz functions $b_j, \partial_s b_j$ gives 
\begin{align*}
\Av_n(b_j) \xrightarrow{p} \E[b_j] \,,~~~\mbox{and}~~~ \Delta_j\xrightarrow{p} 0 \,,
\end{align*}
since by Proposition~\ref{prop:orthogonal_increments}, for every $s\ge j$, $U^{s+1}-U^s$ is centered and independent of $\partial_jb_s(U^0,\ldots,U^s)$. Moreover, since $\alpha_d \to \alpha$ and $M^{j-1}$ converges strongly, the last term in~\eqref{eq:A1} converges strongly to $0$. Therefore, by the strong convergence statement in Theorem~\ref{thm:main-general} we have         
\begin{align}\label{eq:A2}
\cA(m^\ell) \,&\xrightarrow{\mathrm{str}}\, S^{\ell} + \alpha \sum_{j=0}^{\ell-1} \E[b_{j}] a_{j-1} \big(S^j - S^{j-1}\big)\,,\notag\\
&= S^{\ell} - S^{\ell-1} +  \sum_{j=0}^{\ell-1} \big(1+\alpha\E[b_{j}] a_{j-1}\big) \big(S^j - S^{j-1}\big)\,,
\end{align}
where we used $S^{-1}=0$.
The convergence of the variance follows by orthogonality and~\eqref{eq:discretized_bm}. 
\end{proof}

\subsection{A diffusion approach}
Next we would like to choose the free parameters $a_j,b_j$. Mirroring condition~\eqref{eq:variance_preserve_discretetime} we set
\begin{equation}\label{eq:choice_a}
a_j = - \frac{1}{\sqrt{\alpha \E[b_j^2]}}
\qquad \forall j \ge -1\,.	
\end{equation}
The minus sign is chosen in view of minimizing the variance achieved in~\eqref{eq:semicircular_variance} which now becomes
\begin{equation}\label{eq:variance_1}
(\sigma^\ell)^2  = \delta \sum_{j=0}^{\ell-1}\Big(\sqrt{\E[b_{j-1}^2]} - \sqrt{\alpha} \E[b_j]\Big)^2  + O(\delta)\,.
\end{equation}
On the other hand $m^\ell$ converges empirically (Proposition~\ref{prop:m_A_limits}) to the martingale  
\begin{equation}\label{eq:Muhat}
	G^\ell = \sqrt{\delta} + \sum_{j=0}^{\ell-1} b_j (U^{j+1}-U^j)\,,
\end{equation} 
which we want to steer towards $\{-1,+1\}$ at some terminal time. A simple way of doing this is to let $G^\ell$ represent (an approximation of) the expected sign of $U^T$ at a terminal time $T$, given the past up to time $\ell \le T$. We present a continuous time implementation which we then discretize at time intervals $\delta$, and then let $\delta \to 0$. 

Let $(B_t)_{t \in [0,1]}$ be a standard Brownian motion and consider the function $g: [0,1]\times \R \to \R$ given by  
\begin{equation}\label{eq:heat_equation_solution}
	g(t,x) = \E\big[\mathrm{sign}(B_1) \mid  B_t=x\big] = 2 \Phi\Big(\frac{x}{\sqrt{1-t}}\Big)-1\,,
\end{equation}
where $\Phi$ is the cumulative function of the standard normal distribution.  
This function has terminal value $g(1,x) = \mathrm{sign}(x)$ and satisfies the heat equation 
\begin{equation}\label{eq:heat_equation}
	\partial_t g + \frac{1}{2}\partial_{x}^2 g = 0\,,\qquad (t, x) \in [0,1) \times \R \,.
\end{equation}
 Now let  
 \begin{equation}\label{eq:G_t}
 	G_t := g(t,B_t)\,.
 \end{equation}
It can be verified by It\^{o}'s formula that 
\begin{equation}\label{eq:heat-control-correct}
	\rmd G_t = b(t,B_t) \,\rmd B_t\,, \qquad \mbox{where}\qquad b(t,x) := \partial_x g(t,x) = \sqrt{\frac{2}{\pi(1-t)}} \,\exp\Big\{-\frac{x^2}{2(1-t)}\Big\}\,. 
\end{equation}
The martingale $(G_t)$ is the continuous time process we want to emulate with $G^\ell$: we let
\begin{align}\label{eq:choice_b1prime}
b_j(U^0,\ldots,U^j) := b(\delta j, U^j) \,. 
\end{align}
We note that $b(t,\cdot)$ is smooth, strictly positive and bounded. 

 We also write the continuous time model of the variance $(\sigma^\ell)^2 $:
\begin{equation}\label{eq:continuous_variance}
\sigma_t^2 := \int_{0}^{t}\Big(\sqrt{\E[b(s,B_s)^2]} - \sqrt{\alpha} \E[b(s,B_s)]\Big)^2 \rmd s\,.
\end{equation}

\begin{lemma}
\label{lem:time_discretization}
Fix $\alpha$ and a time horizon $q<1$. Under the coupling $U^{j} = B_{\delta j}$, $j \ge 0$, there exists a constant $C = C(\alpha,q)$ such that  
\begin{align}
\max_{1 \le j \le q/\delta} ~\E\big[ | G^j - G_{\delta j}|^2\big]   
\vee \big| (\sigma^j)^2  - \sigma_{\delta j}^2 \big| \le C\delta\,.\label{eq:discretize_diff_small}
\end{align}
\end{lemma}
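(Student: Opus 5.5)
Under the coupling $U^j=B_{\delta j}$ (legitimate because, by \eqref{eq:discretized_bm}, the increments $U^{j+1}-U^j$ are independent centered Gaussians of variance $\delta$ and $U^0=0$), the discrete martingale becomes an Euler scheme for the stochastic integral:
\[
G^j=\sqrt\delta+\sum_{k=0}^{j-1} b(\delta k,B_{\delta k})\,(B_{\delta(k+1)}-B_{\delta k}),\qquad G_{\delta j}=G_0+\int_0^{\delta j} b(s,B_s)\,\rmd B_s .
\]
Note that $G_0=g(0,0)=0$. The plan is to treat the two quantities in \eqref{eq:discretize_diff_small} separately, using only the smoothness of $b$ on $[0,q]\times\R$ with $q<1$ fixed.

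\textbf{Martingale part.} By the It\^o isometry,
\[
\E|G^j-G_{\delta j}|^2=\delta+\sum_{k<j}\int_{\delta k}^{\delta(k+1)}\E\big[(b(\delta k,B_{\delta k})-b(s,B_s))^2\big]\,\rmd s .
\]
The cross term vanishes because the constant $\sqrt\delta$ is deterministic and the stochastic integral is centered. On $[0,q]\times\R$, the functions $\partial_x b$ and $\partial_t b$ are bounded by constants depending only on $q$: $b$ is a Gaussian density in $x$ scaled by $(1-t)^{-1/2}$, and its derivatives carry powers of $(1-t)^{-1}$ times polynomials in $x$ times $e^{-x^2/(2(1-t))}$, which are bounded. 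Hence
\[
|b(\delta k,B_{\delta k})-b(s,B_s)|\le C(|s-\delta k|+|B_s-B_{\delta k}|),
\]
whose square has expectation $O(\delta)$. Summing over at most $q/\delta$ intervals of length $\delta$ gives $O(\delta)$ in total.

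\textbf{Variance part.} Set
\[
\beta_1(t)=\E[b(t,B_t)],\qquad \beta_2(t)=\E[b(t,B_t)^2].
\]
Both are explicit Gaussian integrals. In particular $\beta_1(t)=\sqrt{2/\pi}$ is constant, since $g(t,B_t)$ is a martingale with $\E[\partial_x g]=\partial_x\E g(1,\cdot+x)|_{x=0}$ at the Gaussian law, and $\beta_2(t)$ is a smooth explicit function of $t$. Both are $C^1$ on $[0,q]$, bounded, and $\beta_2$ is bounded below. Hence $f(t)=\big(\sqrt{\beta_2(t-\delta)}-\sqrt\alpha\,\beta_1(t)\big)^2$ has the form of a Lipschitz function on $[0,q]$. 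Next, I will take the exact expression \eqref{eq:semicircular_variance} with the choice \eqref{eq:choice_a}, rather than the version \eqref{eq:variance_1} with its $O(\delta)$ remainder. This gives
\[
(\sigma^j)^2=\delta\beta_2(\delta(j-1))+\delta\sum_{k<j}\big(\sqrt{\beta_2(\delta(k-1))}-\sqrt\alpha\,\beta_1(\delta k)\big)^2 ,
\]
with the convention $b_{-1}=1$ at $k=0$, which is a single bounded term contributing $O(\delta)$. This is a left Riemann sum of a Lipschitz integrand, up to the index shift $k-1$ versus $k$, which costs $O(\delta)$ per term times $\delta$, i.e. $O(\delta)$ in total. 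The standard Riemann-sum error for Lipschitz integrands over $[0,q]$ is also $O(\delta)$.

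\textbf{Main obstacle.} The only delicate point is the uniformity of the constants as $t\to1$. This is why the horizon $q<1$ is fixed and $C$ depends on $q$: the derivative bounds on $b$ and on $\beta_2$ blow up like powers of $(1-q)^{-1}$. So the step to check carefully is the explicit bound on $\partial_t b$, $\partial_x b$ and $\beta_2'$ on $[0,q]$. After that, both estimates are routine.
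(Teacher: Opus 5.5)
Your proof is correct. The variance part follows essentially the paper's route. Both arguments start from the exact expression \eqref{eq:semicircular_variance} with \eqref{eq:choice_a} and compare the resulting lagged sum with $\int_0^{t_j}F_\alpha$ by a Riemann-sum estimate. The paper absorbs the lag by reindexing into a left Riemann sum plus the explicit boundary term $\delta[1+2c_\alpha(\rho(t_{j-1})-1)]$; you charge it term by term via Lipschitz continuity of $\sqrt{\beta_2}$. Either way the cost is $O(\delta)$.

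The genuine difference is in the martingale part. You bound $\E[(b(t_k,B_{t_k})-b(s,B_s))^2]$ through sup-norm bounds on $\partial_t b$ and $\partial_x b$ over $[0,q]\times\R$. The bound on $\partial_t b$ is immediate from $\partial_t b=-\tfrac12\partial_x^2 b$. The paper instead notes that $b=\partial_x g$ itself solves the backward heat equation, so $b(t,B_t)$ is a square-integrable martingale on $[0,q]$. This gives the exact identity
\[
\E[(b(s,B_s)-b(t,B_t))^2]=r(s)-r(t)\,,
\]
with $r=\beta_2$ in your notation. The It\^o-isometry sum then telescopes to at most $\delta(r(q)-r(0)+1)$, with no derivative estimates. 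The resulting constant is of order $(1-q)^{-1/2}$, which is milder than what sup-norm derivative bounds give. In exchange, your route does not use the heat-equation structure of $b$, so it applies to any control with bounded first derivatives on $[0,q]\times\R$.

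Two minor points:
\begin{enumerate}
\item Your justification that $\beta_1\equiv\sqrt{2/\pi}$ is garbled. A direct Gaussian integral gives it, and your lag estimate does not need it anyway.
\item The leading term $\delta\beta_2(\delta(j-1))\le\delta\beta_2(q)$ should be listed explicitly among the $O(\delta)$ contributions.
\end{enumerate}
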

\begin{proof} 
We define for $0\le t<1$,
\[
t_k:=k\delta\,,
\qquad
\mu(t):=\E[b(t,B_t)]\,,
\qquad
r(t):=\E[b(t,B_t)^2]\,,
\qquad
\rho(t):=\sqrt{r(t)}\,.
\]
By integration we have
\begin{equation}\label{eq:heat-control-moments}
 \mu(t)=\sqrt{\frac{2}{\pi}}:=\mu\,,
 \qquad
 r(t)=\frac{2}{\pi\sqrt{1-t^2}}\,,
 \qquad
 \rho(t)=\sqrt{\frac{2}{\pi}}(1-t^2)^{-1/4}\,.
\end{equation}
In particular, $r$ and $\rho$ are increasing. We also define
\begin{equation}\label{eq:cF_alpha_def}
 c_\alpha:=\sqrt\alpha\,\mu=\sqrt{\frac{2\alpha}{\pi}}\,,
 \qquad
 F_\alpha(t):=(\rho(t)-c_\alpha)^2\,.
\end{equation}
Then
\begin{equation}\label{eq:continuous-variance-F}
 \sigma_t^2=\int_0^tF_\alpha(s)\,\rmd s\,.
\end{equation}

We first estimate $G^j - G_{\delta j}$. Since $b=\partial_xg$ also solves the heat equation, It\^{o}'s formula shows that $(b(t,B_t))_{t\le q}$ is a square-integrable martingale.  Hence, for
$0\le t\le s\le q$,
\begin{equation}\label{eq:b-increment-isometry}
\E\bigl[(b(s,B_s)-b(t,B_t))^2\bigr] = r(s)-r(t)\,.
\end{equation}
Indeed, by the martingale property we have
$\E[b(s,B_s)b(t,B_t)]=\E[b(t,B_t)^2]$, yielding~\eqref{eq:b-increment-isometry}. 

Now by~\eqref{eq:heat-control-correct} and~\eqref{eq:Muhat}, we have
\[
G_{t_j}=\int_0^{t_j}b(s,B_s)\,\rmd B_s\,,
\qquad
G^j= \sqrt{\delta} + \sum_{k=0}^{j-1}
b(t_k,B_{t_k})(B_{t_{k+1}}-B_{t_k})\,.
\]
By It\^{o}'s isometry and \eqref{eq:b-increment-isometry},
\begin{align}
\E|G^j-G_{t_j}|^2
&=
\delta + \sum_{k=0}^{j-1}\int_{t_k}^{t_{k+1}}
\bigl(r(s)-r(t_k)\bigr)\,\rmd s\notag\\
&\le
\delta + \delta\sum_{k=0}^{j-1}
\bigl(r(t_{k+1})-r(t_k)\bigr)\notag\\
&=
\delta\bigl(r(t_j)-r(0) + 1\bigr)
\le
\delta\bigl(r(q)-r(0) + 1\bigr)\,,
\label{eq:G-discretization-explicit}
\end{align}
where we used in the inequality that $r$ is increasing.

We next upper bound the difference in the variances. Recall that
$r_{-1}=1$, $r_k=r(t_k)$ for $k\ge0$, and $a_{k-1}=-\frac1{\sqrt{\alpha r_{k-1}}}$. Since $\E[b_k]=\mu$, we obtain from \eqref{eq:semicircular_variance} that
\begin{align}
 (\sigma^j)^2
 &= 
 \delta r(t_{j-1}) + \delta \sum_{k=0}^{j-1} \left( 1 - \mu \sqrt{\frac{\alpha}{r_{k-1}}} \right)^2 r_{k-1}
 = \delta r(t_{j-1})
 {}+\delta(1-c_\alpha)^2
 {}+\delta\sum_{k=0}^{j-2}F_\alpha(t_k)\,,
 \label{eq:sigma-exact-lagged}
\end{align}
where the last sum is empty when $j=1$. Here we used the definition of $F_\alpha$ in~\eqref{eq:cF_alpha_def}. Equivalently,
\begin{align}
 (\sigma^j)^2
 &=
 \delta\sum_{k=0}^{j-1}F_\alpha(t_k)
 {}+\delta\left[
 r(t_{j-1})+(1-c_\alpha)^2-F_\alpha(t_{j-1})
 \right]\notag\\
 &=
 \delta\sum_{k=0}^{j-1}F_\alpha(t_k)
 {}+\delta\left[
 1+2c_\alpha\bigl(\rho(t_{j-1})-1\bigr)
 \right]\,.
 \label{eq:sigma-left-sum-exact}
\end{align}

For every absolutely continuous function $F$,
\[
 \left| \delta\sum_{k=0}^{j-1}F(t_k)-\int_0^{t_j}F(s)\,\rmd s
 \right|
 \le
 \delta\int_0^{t_j}|F'(s)|\,\rmd s\,.
\]
Applying this with $F=F_\alpha$ which is absolutely continuous as $q<1$, and using \eqref{eq:continuous-variance-F} and \eqref{eq:sigma-left-sum-exact}, we obtain
\begin{equation}\label{eq:sigma-discretization-explicit}
 \bigl|(\sigma^j)^2-\sigma_{t_j}^2\bigr|
 \le
 \delta\bigl(D_{\alpha,q}+V_{\alpha,q}\bigr)\,,
\end{equation}
where
\begin{align}
 D_{\alpha,q} &:= \max_{t\in\{0,q\}} \left|1+2c_\alpha(\rho(t)-1)\right|\,, \label{eq:D-alpha-q}\\
 V_{\alpha,q} &:= \int_0^q|F_\alpha'(s)|\,\rmd s = \int_{\rho(0)}^{\rho(q)}2|u-c_\alpha|\,\rmd u \le \bigl(\rho(0)-c_\alpha\bigr)^2{}+\bigl(\rho(q)-c_\alpha\bigr)^2\,.
 \label{eq:V-alpha-q}
\end{align}
Here we used that $\rho$ is increasing to establish~\eqref{eq:D-alpha-q}. 

The above estimates are uniform over $1\le j\le q/\delta$. Thus
\eqref{eq:discretize_diff_small} holds with the explicit choice
\begin{equation}\label{eq:discretization-constant}
 C(\alpha,q)
 :=
 \max\Big\{ \frac{2}{\pi} \Big(\frac1{\sqrt{1-q^2}}-1\Big), D_{\alpha,q}+V_{\alpha,q}\Big\}\,,
\end{equation}
where $\rho(0)=\sqrt{2/\pi}$, $\rho(q)=\sqrt{2/\pi}(1-q^2)^{-1/4}$, $c_\alpha=\sqrt{2\alpha/\pi}$.
\end{proof}

\subsection{Rounding}
\label{sec:rounding}
In this section we implement the sign rounding and prove Theorem~\ref{thm:random_matrix_discrepancy}.  
Fix $q<1, \ell\ge1$ and let $\delta=q/\ell$. Let $(B_{t})_{t \in [0,1]}$ be a standard Brownian motion identically coupled to $(U^0,\ldots,U^{\ell})$ at increments $\delta$: $B_{j\delta} = U^j$, $j \le \ell$. Let
\[
 t_j=j\delta\,,
 \qquad
 \xi_j= B_{t_{j+1}}-B_{t_j}\,,
 \qquad
 b_j^\delta=b(t_j,B_{t_j})\,,~~~~ 0 \le j \le \ell \,.
\]
For $j\ge1$, recall the incremental map
\begin{equation}\label{eq:terminal-incremental-map}
g_{j}(u^0,\ldots,u^j) := \sqrt{\delta} + \sum_{k=0}^{j-1} b(t_k,u^k)(u^{k+1}-u^k)\,,
\end{equation}
with $g_{0}\equiv\sqrt{\delta}$.  The terminal
scalar value and its sign are
\begin{equation}\label{eq:ideal-terminal-message}
 G_\delta^\ell
 := 
g_{\ell}(B_{t_0},\ldots,B_{t_\ell})
 =\sqrt{\delta} + 
 \sum_{k=0}^{\ell-1}b_k^\delta\xi_k\,,
 \qquad
 F_\delta:=\operatorname{sign}(G_\delta^\ell)\,,
\end{equation}
where $\operatorname{sign}(0)=1$.

We first show that these maps are admissible.
\begin{lemma}
\label{lem:heat-control-admissibility}
Fix $q<1$ and $\delta=q/\ell$. For every $0\le j\le\ell$,
\[
  g_{j}\,,
 \partial_0 g_{j},\ldots,
 \partial_j g_{j}\in\PL_2\,.
\]
Moreover, for $j\ge1$,
\begin{equation}\label{eq:heat-newest-slope}
 \partial_j g_{j}(x^0,\ldots,x^j) =b(t_{j-1},x^{j-1})>0\,.
\end{equation}
Thus Assumptions~\ref{ass:g} and \ref{ass:transversality} hold. 
Moreover, From~\eqref{eq:choice_a} and~\eqref{eq:heat-control-moments}, we have
\[\big|a_{j}\big| = \frac{1}{\sqrt{\alpha r(t_j)}}\in \left[\frac{1}{\sqrt{\alpha r(0)}},\frac{1}{\sqrt{\alpha r(q)}} \right]\,,\]
since $r$ is increasing. Therefore, Assumption~\ref{ass:a1} holds.
\end{lemma}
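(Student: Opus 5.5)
The plan is to write the partial derivatives of $g_j$ explicitly and bound each one using uniform bounds on $b$ and its derivatives on $[0,q]\times\R$. From \eqref{eq:terminal-incremental-map}, $g_j(x^0,\ldots,x^j)=\sqrt\delta+\sum_{k=0}^{j-1}b(t_k,x^k)(x^{k+1}-x^k)$, where the index $k$ runs over $k\le j-1$, so $t_k\le q-\delta<1$. Differentiating term by term gives, for $0\le i\le j$,
\[
\partial_i g_j = b(t_{i-1},x^{i-1})\ind_{i\ge1} - b(t_i,x^i)\ind_{i\le j-1} + \partial_x b(t_i,x^i)(x^{i+1}-x^i)\ind_{i\le j-1}\,.
\]
For $i=j$ only the first term survives, which gives \eqref{eq:heat-newest-slope}. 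Positivity there is immediate from the Gaussian form of $b$ in \eqref{eq:heat-control-correct}. This also exhibits the decomposition \eqref{eq:transverse-form} with $c_j=b(t_{j-1},x^{j-1})>0$ and $r_j$ collecting the remaining terms. In addition $g_0\equiv\sqrt\delta\neq0$, so Assumption~\ref{ass:transversality} holds.

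For the pseudo-Lipschitz claims, the key input is that for $t\le q<1$ the functions $b(t,\cdot)$, $\partial_xb(t,\cdot)$ and $\partial_x^2b(t,\cdot)$ are bounded uniformly in $t\in[0,q]$. Since $b$ is a Gaussian density scaled by $\sqrt{2/(\pi(1-t))}$, each of these is bounded by a constant $K_q$ that depends only on $(1-q)^{-1}$. Given this, every term in $g_j$ and in $\partial_ig_j$ has one of three forms: a constant, a bounded Lipschitz function of one coordinate, or a product $\phi(x^k)\cdot(x^{k+1}-x^k)$ with $\phi$ bounded and Lipschitz. For such a product I will use the elementary estimate
\[
|\phi(x)\ell(x)-\phi(y)\ell(y)|\le\|\phi\|_\infty|\ell(x)-\ell(y)|+\mathrm{Lip}(\phi)|\ell(y)|\,\|x-y\|_2\,,
\]
where $\ell$ is linear. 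This bound is at most $L(1+\|x\|_2+\|y\|_2)\|x-y\|_2$, which is exactly the $\PL_2$ condition. Bounded Lipschitz functions are trivially $\PL_2$, and finite sums preserve the class, so $g_j$ and all $\partial_ig_j$ lie in $\PL_2$. The $C^1$ requirement holds because $b$ is smooth in $x$. Hence Assumption~\ref{ass:g} holds with $k_j=2$.

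For Assumption~\ref{ass:a1}, I would use \eqref{eq:choice_a} and \eqref{eq:heat-control-moments}, together with the convention that $b_{-1}=1$ and hence $r_{-1}=1$. These give $|a_j|=(\alpha r(t_j))^{-1/2}$. Since $r$ is increasing and $t_j\le q$, this lies between $(\alpha r(q))^{-1/2}$ and $(\alpha r(0))^{-1/2}$; the interval is written in the statement with its endpoints in reversed order, but the content is the same. The entries $a_{t,s}$ of \eqref{eq:incremental-coefficient-array} are $a_{t-1}$ or differences $a_{s-1}-a_s$. Thus $|a_{t,t}|$ is bounded below by $\min\{(\alpha r(q))^{-1/2},\alpha^{-1/2}\}>0$, and all entries are bounded above by twice the maximum of these bounds. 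This gives Assumption~\ref{ass:a1} once $a_*$ and $a^*$ are taken slightly outside these bounds, so that the inequalities are strict.

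There is no real obstacle here. The only point needing care is the uniform bound on $\partial_x^2 b$, which blows up as $t\to1$. That is precisely why the horizon is fixed at $q<1$, and why the resulting constants depend on $q$.
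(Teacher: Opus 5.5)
Your proposal is correct and follows the paper's proof: you write the partial derivatives of $g_j$ explicitly and use uniform bounds on $b$, $\partial_x b$, $\partial_x^2 b$ over $t\in[0,q]$, with your product estimate in place of the paper's mean-value-theorem step. Your proposal is slightly more careful than the written proof in two places: your formula for $\partial_0 g_j$ keeps the term $\partial_x b(t_0,x^0)(x^1-x^0)$, which the paper's proof drops, and you treat $a_{-1}$ separately via $r_{-1}=1$.
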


\begin{proof}
Recall the definition of $b$ from~\eqref{eq:heat-control-correct}:
\[b(t,x) = \sqrt{\frac{2}{\pi(1-t)}} \,\exp\Big\{-\frac{x^2}{2(1-t)}\Big\}\,.\]
It follows that for $r=0,1,2$,
\[
 B_r(q):=
 \sup_{0\le t\le q}\sup_{x\in\R}|\partial_x^rb(t,x)|
 \le C_r(1-q)^{-r-1/2}<\infty\,.
\]
For $1\le r\le j-1$,
\[
 \partial_r g_{j}(x)
 =b(t_{r-1},x^{r-1})-b(t_r,x^r)
 +\partial_xb(t_r,x^r)(x^{r+1}-x^r)\,,
\]
with $\partial_0 g_{j}(x) = - b(0,x^{0})$ and \eqref{eq:heat-newest-slope} for $r=j$. Hence $\partial_r g_j$ and $\partial_{r,s} g_j$ have at most linear growth. Applying the mean-value theorem to the map and to each first derivative proves the asserted $\PL_2$ bounds.
\end{proof}

We run AMP with $g_j$ and the coefficients $a_j$ from \eqref{eq:choice_a}, and define the binary vector
\begin{equation}\label{eq:terminal-sign-vector}
 \widehat x_i:=\operatorname{sign}(m_i^\ell)\,.
\end{equation}
Its scalar rounding error is
\begin{equation}\label{eq:ideal-rounding-error}
 e_{\delta,\ell}
 :=\E[(F_\delta-G_\delta^\ell)^2]
 =\E[(1-|G_\delta^\ell|)^2]\,.
\end{equation}

\begin{lemma}
\label{lem:rounding-correction}
For every fixed $q<1$, $\delta=q/\ell$, and $\eta>0$,
\begin{equation}\label{eq:finite-mesh-rounding-bound}
 \lim_{d\to\infty}
 \Prob\left(
 \bigl\|\cA(\widehat x-m^\ell)\bigr\|_{\op}
 >
 2(1+\sqrt\alpha)\sqrt{e_{\delta,\ell}}+\eta
 \right)=0 \, .
\end{equation}
Moreover, as $\delta\to 0$ with $q/\delta\in\N$,
\begin{equation}\label{eq:continuum-rounding-error}
 e_{\delta,\ell}\longrightarrow e(q)\,,
 \qquad
 e(q)
 :=
 1+\frac{2}{\pi}\arcsin(q)
   -\frac{4}{\pi}\arcsin(\sqrt q)\,.
\end{equation}
In particular, $e(q)\to 0$ as $q\to 1$. 
\end{lemma}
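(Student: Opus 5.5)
The first claim \eqref{eq:finite-mesh-rounding-bound} is a direct application of the operator norm estimate \eqref{eq:terminal-comparison-opnorm} in Theorem~\ref{thm:main-general}. We take $T=\ell$ and $h:=\operatorname{sign}\circ g_\ell$, which is bounded and Borel, so that $\widehat m=\widehat x$ and $H=F_\delta$. By Lemma~\ref{lem:heat-control-admissibility}, Assumptions~\ref{ass:a1}, \ref{ass:g} and \ref{ass:transversality} hold. The initialization $u^0=0$ with $g_0\equiv\sqrt\delta\neq 0$ satisfies Assumption~\ref{ass:init}, and Assumption~\ref{ass:aspect} is given. It remains to check Assumption~\ref{ass:zero-discontinuity}. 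The function $h$ is discontinuous only on $\{g_\ell=0\}$, because $g_\ell$ is continuous. Under the coupling $U^j=B_{t_j}$ we have $g_\ell(U^0,\ldots,U^\ell)=G^\ell_\delta=\sqrt\delta+\sum_k b_k^\delta\xi_k$. Conditionally on $\mathcal F_{t_{\ell-1}}$, this equals a fixed quantity plus $b^\delta_{\ell-1}\xi_{\ell-1}$, where $b^\delta_{\ell-1}>0$ and $\xi_{\ell-1}\sim N(0,\delta)$ is independent. Hence $G^\ell_\delta$ has a conditional density, and $\Prob(G_\delta^\ell=0)=0$. Finally, since $H-G^T=F_\delta-G^\ell_\delta$ and $|F_\delta|=1$ with the same sign as $G^\ell_\delta$, we get $\E[(H-G^T)^2]=e_{\delta,\ell}$, which yields \eqref{eq:finite-mesh-rounding-bound}.

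For \eqref{eq:continuum-rounding-error}, Lemma~\ref{lem:time_discretization} with $j=\ell=q/\delta$ gives $\E|G^\ell_\delta-G_q|^2\le C\delta$. The map $y\mapsto 1-|y|$ is $1$-Lipschitz, so $\|(1-|G^\ell_\delta|)-(1-|G_q|)\|_{L^2}\le\sqrt{C\delta}$. The $L^2$ norms therefore converge, and $e_{\delta,\ell}\to\E[(1-|G_q|)^2]$. Here we use that $|G_q|\le1$, so $G_q$ is bounded.

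It remains to evaluate $e(q)=\E[(1-|G_q|)^2]=1-2\E|G_q|+\E[G_q^2]$ with $G_q=2\Phi(B_q/\sqrt{1-q})-1=\E[\operatorname{sign}(B_1)\mid B_q]$.
\begin{itemize}
\item For the second moment, write $G_q=\E[\operatorname{sign}(B_1)\mid B_q]=\E[\operatorname{sign}(B_1')\mid B_q]$, where $B_1'$ is an independent continuation from time $q$. Then $\E[G_q^2]=\E[\operatorname{sign}(B_1)\operatorname{sign}(B_1')]$. The pair $(B_1,B_1')$ is centered Gaussian with unit variances and correlation $q$, so Sheppard's formula gives $\E[G_q^2]=\frac{2}{\pi}\arcsin q$.
\item For the first moment, $G_q$ has the sign of $B_q$, so $\E|G_q|=\E[\operatorname{sign}(B_q)G_q]=\E[\operatorname{sign}(B_q)\operatorname{sign}(B_1)]$ by the tower property. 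The correlation of $B_q$ and $B_1$ is $\sqrt q$, hence $\E|G_q|=\frac{2}{\pi}\arcsin\sqrt q$.
\end{itemize}
Combining these gives $e(q)=1+\frac2\pi\arcsin q-\frac4\pi\arcsin\sqrt q$. As $q\to1$, $e(q)\to 1+1-2=0$.

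The only step requiring care is the null-discontinuity verification. It hinges on the positivity of the newest slope $b(t_{\ell-1},\cdot)$ and the Gaussian last increment, and these make it routine. The rest is bookkeeping plus the two Sheppard-formula identities.
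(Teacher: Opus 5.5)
Your proposal is correct and follows the paper's proof. It uses the same application of Theorem~\ref{thm:main-general} with $h=\operatorname{sign}\circ g_\ell$, the same null-discontinuity check via the positive newest slope and the Gaussian last increment, and the same Sheppard-formula evaluation of $\E[G_q^2]$ and $\E[\operatorname{sign}(B_q)G_q]=\E|G_q|$. The only minor difference is how you pass to the limit $e_{\delta,\ell}\to\E[(1-|G_q|)^2]$: you use that $y\mapsto 1-|y|$ is $1$-Lipschitz, which gives an explicit $O(\sqrt\delta)$ rate and avoids the paper's step of deducing $L^2$ convergence of the signs from $\Prob(G_q=0)=0$.
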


\begin{proof}
We use Theorem~\ref{thm:main-general} with $T=\ell$ and terminal map
\[
 h=\operatorname{sign}\circ g_{\ell}\,.
\]
We first verify its hypotheses. Lemma~\ref{lem:heat-control-admissibility} establishes the desired regularity of $g_j$. Next, the map $h$ has discontinuities 
\[
 \operatorname{Disc}(h) \subseteq\{g_{\ell}=0\}\,.
\]
Conditionally on $(U^0,\ldots,U^{\ell-1})$, the random variable
\[
 G_\delta^\ell
 =G_\delta^{\ell-1}
 +b(t_{\ell-1},U^{\ell-1})(U^\ell-U^{\ell-1})
\]
is affine in $U^\ell$, with strictly positive coefficient
$b(t_{\ell-1},U^{\ell-1})$. The law of the last increment $U^{\ell} - U^{\ell-1}$ is conditionally $N(0,\delta)$ by Proposition \ref{prop:orthogonal_increments} and ~\eqref{eq:discretized_bm}, so it is centered Gaussian independent of the past and so its law has no atom. In particular, $\Prob(G_\delta^\ell=0)=0$, which verifies the condition~\eqref{eq:terminal-comparison-null}.

By Theorem~\ref{thm:main-general},
\[
 \lim_{d\to\infty}
 \Prob\left(
 \bigl\|\cA(\widehat x-m^\ell)\bigr\|_{\op}
 >
 2(1+\sqrt\alpha)\sqrt{e_{\delta,\ell}}+\eta
 \right)=0\,.
\]
This proves \eqref{eq:finite-mesh-rounding-bound}.
It remains only to compute $e_{\delta,\ell}$. Lemma~\ref{lem:time_discretization} yields 
\[
 G_\delta^\ell\xrightarrow[\delta \to 0]{} G_q = g(q,B_q)=2 \Phi\Big(\frac{B_q}{\sqrt{1-q}}\Big)-1\,,
 \qquad\text{in }L^2.
\]
Since $\Prob(G_q=0)=0$, the corresponding signs converge in $L^2$, and therefore
\[
 e_{\delta,\ell}\xrightarrow[\delta \to 0]{}
 \E\bigl[(\operatorname{sign}(G_q)-G_q)^2\bigr]\,.
\]
The function $g(q,\cdot)$ is odd and strictly increasing, so
$\operatorname{sign}(G_q)=\operatorname{sign}(B_q)$ almost surely.
Let $B_1'$ be conditionally independent of $B_1$ given $B_q$, with the same conditional law as $B_1$. The representation $g(t,x) = \E[\operatorname{sign}(B_1)\,|\,B_t = x]$ from~\eqref{eq:heat_equation_solution} and the Gaussian sign-correlation identity thus yields
\begin{align*}
 \E[G_q^2]
 &=
 \E[\operatorname{sign}(B_1)\operatorname{sign}(B_1')]
 =
 \frac{2}{\pi}\arcsin(q)\,,\\
 \E[\operatorname{sign}(B_q)G_q]
 &=
 \E[\operatorname{sign}(B_q)\operatorname{sign}(B_1)]
 =
 \frac{2}{\pi}\arcsin(\sqrt q)\,.
\end{align*}
Expanding the square proves \eqref{eq:continuum-rounding-error}.  
\end{proof}

\begin{proposition}
\label{prop:rounded-by-comparison}
For $q<1$, define
\begin{equation}\label{eq:sigma-q-explicit}
 \sigma_q^2(\alpha)
 :=
 \frac{2}{\pi}
 \left[
 \arcsin(q)
 -2\sqrt\alpha\int_0^q(1-t^2)^{-1/4}\,\rmd t
 +\alpha q
 \right].
\end{equation}
For $\ell = q/\delta$, recall the sign rounding $\widehat x = \widehat x_{q,\delta} = \operatorname{sign}(m^\ell)$. For every $\eps>0$,
\begin{equation}\label{eq:rounded-comparison-bound}
 \lim_{\substack{\delta\to 0\\q/\delta\in\N}}
 \limsup_{d\to\infty}
 \Prob\left(
 \|\cA(\widehat x)\|_{\op}
 >
 2\sigma_q(\alpha)
 +2(1+\sqrt\alpha)\sqrt{e(q)}
 +\eps
 \right)=0\,.
\end{equation}
\end{proposition}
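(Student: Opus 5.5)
The approach is a triangle inequality, with each piece controlled by earlier results:
\[
\|\cA(\widehat x)\|_{\op}\le \|\cA(m^\ell)\|_{\op}+\|\cA(\widehat x-m^\ell)\|_{\op}.
\]
For the first term, Proposition~\ref{prop:m_A_limits} gives $\cA(m^\ell)\xrightarrow{\mathrm{str}} Y^\ell$. Here $Y^\ell$ is centered semicircular with $\|Y^\ell\|=2\sigma^\ell$, so strong convergence (the norm part of Definition~\ref{def:strong}, applied with the polynomial $P(x)=x$) yields $\|\cA(m^\ell)\|_{\op}\xto{p}2\sigma^\ell$. For the second term, Lemma~\ref{lem:rounding-correction} bounds it by $2(1+\sqrt\alpha)\sqrt{e_{\delta,\ell}}+\eta$ with probability tending to one. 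Hence, for fixed $q,\delta$ and any $\eta>0$,
\[
\limsup_{d\to\infty}\Prob\Big(\|\cA(\widehat x)\|_{\op}>2\sigma^\ell+2(1+\sqrt\alpha)\sqrt{e_{\delta,\ell}}+2\eta\Big)=0.
\]

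Next I would pass to the limit $\delta\to0$ in the two deterministic constants. By Lemma~\ref{lem:time_discretization}, $|(\sigma^\ell)^2-\sigma_q^2|\le C(\alpha,q)\delta$ with $\ell=q/\delta$, where $\sigma_q^2$ denotes the continuum variance \eqref{eq:continuous_variance} at time $t=q$. By \eqref{eq:continuum-rounding-error}, $e_{\delta,\ell}\to e(q)$. Since the square root is continuous on $[0,\infty)$, both $\sigma^\ell\to\sigma_q$ and $\sqrt{e_{\delta,\ell}}\to\sqrt{e(q)}$. So for $\delta$ small enough,
\[
2\sigma^\ell+2(1+\sqrt\alpha)\sqrt{e_{\delta,\ell}}\le 2\sigma_q+2(1+\sqrt\alpha)\sqrt{e(q)}+\eps/2,
\]
and choosing $\eta=\eps/4$ makes the probability in \eqref{eq:rounded-comparison-bound} vanish in the $\limsup_d$ for all such $\delta$. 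This gives the outer limit.

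It remains to check that the continuum variance \eqref{eq:continuous_variance} at $t=q$ equals the explicit formula \eqref{eq:sigma-q-explicit}. Using \eqref{eq:heat-control-moments} and \eqref{eq:continuous-variance-F}, I would expand $F_\alpha(s)=r(s)-2c_\alpha\rho(s)+c_\alpha^2$ and integrate each term:
\[
\int_0^q r=\tfrac{2}{\pi}\arcsin q,\qquad \int_0^q 2c_\alpha\rho=\tfrac{2}{\pi}\cdot 2\sqrt\alpha\int_0^q(1-t^2)^{-1/4}\,\rmd t,\qquad \int_0^q c_\alpha^2=\tfrac{2\alpha}{\pi}q.
\]
The middle identity uses $c_\alpha\sqrt{2/\pi}=\tfrac{2}{\pi}\sqrt\alpha$. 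Summing the three terms gives exactly \eqref{eq:sigma-q-explicit}.

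The only delicate point I expect is the norm convergence of $\cA(m^\ell)$. It relies on strong convergence from Theorem~\ref{thm:main-general}, via Proposition~\ref{prop:m_A_limits}, and on the Onsager remainder $\alpha_d\sum_j\Delta_j M^{j-1}$ vanishing in operator norm. That proposition has already handled both, so here the step reduces to citing it. The hypotheses of Theorem~\ref{thm:main-general} are guaranteed by Lemma~\ref{lem:heat-control-admissibility}.
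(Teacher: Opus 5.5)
Your proposal is correct and follows the paper's proof essentially step for step. Like the paper, you combine the norm limit $\|\cA(m^\ell)\|_{\op}\to 2\sigma^\ell$ from Proposition~\ref{prop:m_A_limits} with Lemma~\ref{lem:rounding-correction} through a triangle-inequality/union bound at fixed $(q,\delta)$, then let $\delta\to0$ via Lemma~\ref{lem:time_discretization} and \eqref{eq:continuum-rounding-error}, and finally identify $\int_0^q F_\alpha$ with \eqref{eq:sigma-q-explicit}. Your explicit $\eps/4$ bookkeeping and the term-by-term integration of $F_\alpha=r-2c_\alpha\rho+c_\alpha^2$ just spell out what the paper leaves as ``the bound follows'' and ``a substitution yields.''
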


\begin{proof}
Let
\[
 \sigma_{\delta,\ell}^2:= \tau \big( (Y^\ell)^2\big) = (\sigma^\ell)^2
\]
be the finite-$\delta$ variance from
\eqref{eq:semicircular_variance}.  Strong convergence in Proposition~\ref{prop:m_A_limits} implies, for every $\eta>0$,
\begin{equation}\label{eq:unrounded-sequential-limit}
 \lim_{d \to \infty} \Prob\left(
 \left|
 \|\cA(m^\ell)\|_{\op}-2\sigma_{\delta,\ell}
 \right|>\eta
 \right)= 0\,.
\end{equation}

Put $C_\alpha:=2(1+\sqrt\alpha)$. For fixed $\delta$ and $\eta>0$,
linearity and the triangle inequality give
\[
 \begin{aligned}
 &\Prob\Big(
 \|\cA(\widehat x)\|_{\op}
 >2\sigma_{\delta,\ell}
  +C_\alpha\sqrt{e_{\delta,\ell}}+2\eta
 \Big)\\
 &\quad\le
 \Prob\left(
 \|\cA(m^\ell)\|_{\op}
 >2\sigma_{\delta,\ell}+\eta
 \right) +
 \Prob\left(
 \|\cA(\widehat x-m^\ell)\|_{\op}
 >C_\alpha\sqrt{e_{\delta,\ell}}+\eta
 \right)\,.
 \end{aligned}
\]
After taking $\limsup_{d\to\infty}$, the first
term vanishes by \eqref{eq:unrounded-sequential-limit} and the second by
Lemma~\ref{lem:rounding-correction}.  Finally,
Lemma~\ref{lem:time_discretization} and
\eqref{eq:continuum-rounding-error} give
\[
 \sigma_{\delta,\ell}\xrightarrow[\delta \to 0]{} \sigma_q(\alpha)\,,
 \qquad
 e_{\delta,\ell}\xrightarrow[\delta \to 0]{} e(q)\,.
\]
Recall from \eqref{eq:continuous_variance} that
\begin{equation}\label{eq:continuous_variance2}
\sigma_q^2 = \int_{0}^{q}\Big(\sqrt{\E[b(t,B_t)^2]} - \sqrt{\alpha} \E[b(t,B_t)]\Big)^2 \rmd t\,.
\end{equation}
Here
\[
 \E[b(t,B_t)]=\sqrt{\frac2\pi}\,,
 \qquad\mbox{and}\qquad
 \E[b(t,B_t)^2]=\frac{2}{\pi\sqrt{1-t^2}}\,.
\]
A substitution yields \eqref{eq:sigma-q-explicit}. The bound \eqref{eq:rounded-comparison-bound} follows.
\end{proof}

\begin{proof}[Proof of Theorem~\ref{thm:random_matrix_discrepancy}]
At the endpoint,
\begin{align}
 \sigma_1^2(\alpha)
 :=
 \lim_{q\uparrow1}\sigma_q^2(\alpha)
 =
 1-\frac{2\sqrt\alpha}{\pi}
 B\left(\frac12,\frac34\right)
 +\frac{2\alpha}{\pi}\,,
 \label{eq:sigma-one-endpoint-rounding}
\end{align}
because
\[
 \int_0^1(1-t^2)^{-1/4}\,\rmd t
 =
 \frac{1}{2} B\left(\frac12,\frac34\right)\,.
\]
By \eqref{eq:continuum-rounding-error} and $e(q) \to 0$ as $q \to 1$ (Lemma~\ref{lem:rounding-correction}),
\[
\lim_{q \to 1} 2\sigma_q(\alpha)+2(1+\sqrt\alpha)\sqrt{e(q)}
 \longrightarrow2\sigma_1(\alpha)\,.
\]
Choose $q<1$ so that the left-hand side is at most $2\sigma_1(\alpha)+\varepsilon/2$. By Proposition~\ref{prop:rounded-by-comparison}, we may choose $\delta = q/\ell$ sufficiently small in $q, \eps$ so that 
\begin{align*}
&\limsup_{d\to\infty} \Prob\Big(
 \|\cA(\widehat x)\|_{\op}
 >
 2\sigma_1(\alpha)+\eps \Big) \\
 &\quad \le \limsup_{d\to\infty} \Prob\left(
 \|\cA(\widehat x)\|_{\op}
 >
 2\sigma_q(\alpha)
 +2(1+\sqrt\alpha)\sqrt{e(q)}
 +\eps/2
 \right) = 0 \,.
\end{align*} 
This proves~\eqref{eq:binary-final-bound}. Finally, the AMP iteration up to time $\ell-1$ uses $2\ell$ calls to $\cA$ and $\cA^*$, and $\ell$ is a function of $\eps$ and $\alpha$.
\end{proof}

\subsection{Discussion}

We discuss here a few interesting directions extending this approach. 

\paragraph{Analogy with the random perceptron} The vector version of this problem is the random symmetric binary perceptron problem where given a random matrix $A:\R^n \to \R^m$, $m/n \to \alpha$, of centered Gaussian entries with variance $1/n$, one is asked to find a binary point $x$ such that   
 \[\|Ax\|_{\infty} \le \sigma\,.\]
This problem and its spherical version where $x$ is only constrained to be on the sphere, have received a lot of attention, with the satisfiability threshold, structure of the solution space, and algorithmic tractability are all understood to an advanced degree; see for instance~\cite{aubin2019storage,gamarnik2022algorithms,montanari2021tractability,huang2026algorithmic} and references therein. We observe that the diffusion approach we used in this paper solves a slightly modified perceptron problem, namely if one asks for $x \in \{-1,+1\}^n$ such that 
\[\|Ax\|_2\le \sigma\,.\]           
Indeed it can be verified that an implementation of this choice of nonlinearities via classical state evolution~\cite{javanmard2013state} yields for every $\alpha>0$ and $q<1$,
\[Am^{\lfloor q/\delta\rfloor} \,\xrightarrow[\delta \to 0]{\mathrm{emp}}\,N(0,\sigma_q^2)\,,\]   
where $n \to \infty, m/n \to \alpha$ first and $\delta \to 0$ second, and  $\sigma_q^2$ is defined in~\eqref{eq:continuous_variance2}. This observation, together with the analogy between Gaussians and semicircular variables, was the source of our choice of controls for matrix discrepancy.    

\paragraph{Spectral non-linearities} The iteration~\eqref{eq:AMP0}-\eqref{eq:AMP1} constructs the ``matrix messages" $M^t$ as linear combinations of the iterates $V^s, s\le t$. This was enough to obtain an operator norm strictly smaller than 2 in the matrix discrepancy application, and was technically convenient in the proof of state evolution. However this choice constrains the matrix $M^t$, and therefore the matrix output $\cA(m^t)$ to always have a semicircular strong limit. To obtain a richer family of limiting operators one might consider spectral non linearities instead:       
\[M^t =F_t(V^0,\ldots,V^t)\,,\]
where $F_t$ is suitably regular self-adjoint map. Examples would be self-adjoint polynomials, incremental maps of the form 
\[F_t(a_0,\ldots,a_t) = \sum_{s \le t} b_s(a_{s+1}-a_s)b_s^*\,,\qquad b_s = b_s(a_0,\ldots,a_s)\,,\]
where $b_s$ is a regular map in the formal self-adjoint variables $a_s$, and linear combinations thereof (involving different choices of $b_s, s\le t$). The emerging objects  would then be free martingales requiring free stochastic calculus tools~\cite{biane1998stochastic}, and their spectra would go beyond the semicircle. 

We also mention that we imposed a simplifying condition on the vector-side non-linearities $g_t$ in our setting, Assumption~\ref{ass:transversality}. This is a purely technical point ensuring that the empirical Gram matrices $(\ip{m^t}{m^s}_n)_{s,t}$ are almost surely invertible at every finite $n,d$. One can dispose of this assumption and ensure invertibility via a different mechanism which perturbs the messages by a small additive Gaussian noise, then take the noise variance to zero at the end by a stability argument; see~\cite{berthier2017state}. 

\paragraph{Producing a typical spectrum} A by-product of Maillard's large deviation analysis~\cite{maillard2025average} is that the spectrum of $\cA(x)$ induced by a typical (u.a.r.) point $x \in \{-1,+1\}^n$ conditional on $\|\cA(x)\|_{\op}\le 2\sigma$, $\sigma<1$ is not semicircular, but with an explicit density, see Theorem 2.2 therein. For the reasons discussed above, our algorithm only produces semicircular spectra; i.e., highly atypical points in the solution space. What are the spectra one can obtain algorithmically? In particular, can one produce a point exhibiting a typical spectrum? The above proposal might be a viable approach.        
 
\paragraph{Ground state and algorithmic thresholds} What is the asymptotic value of 
\[\min_{x \in \{-1,+1\}^n} \|\cA(x)\|_{\op}\,,\qquad \frac{2n}{d^2} \to \alpha\,?\]        
And what is the smallest operator norm achievable by Lipschitz algorithms? Based on the analogy with the random perceptron problem, the latter question might be simpler than determining the ground state value~\cite{huang2026algorithmic}.  

\vspace{0.3cm}
The remaining of this paper concerns the proof of the state evolution Theorem~\ref{thm:main-general}. In Section~\ref{sec:gaussian-conditioning} we develop a Gaussian decomposition theorem for the operator $\cA$. In Section~\ref{sec:exact-rank} we prove almost sure invertibility of certain empirical Gram matrices. In Section~\ref{sec:proof-nondeg} we execute the induction argument proving state evolution without the terminal message $\widehat m$ and the stability bound~\eqref{eq:terminal-comparison-opnorm}; see Theorem~\ref{thm:direct_SE}.  This takes as input the previous two results. In Section~\ref{sec:adapted-terminal-query} we complete the proof of Theorem~\ref{thm:main-general} addressing the terminal message.

\section{Adaptive Gaussian conditioning}
\label{sec:gaussian-conditioning}
In this section we state and prove a Gaussian decomposition result after conditioning on a sequence of linear observations. This is one of the main inputs to the proof of the state evolution Theorem~\ref{thm:main-general}.  

The following will be used repeatedly.
For $M,N\in\Symm$ we have
\begin{equation}\label{eq:cov-goe}
  \E\,\taud(A_i M)\taud(A_i N)=\frac{2}{d^2}\ip{M}{N}_d\,.
\end{equation}

Recalling that $\cA^*$ is the adjoint of $\cA$ for the normalized inner products, we have for $x \in \R^n, M\in \Symm$,
\begin{equation}
 \ip{\cA(x)}{M}_d =  \ip{x}{\cA^*(M)}_n\,.
\end{equation}
From now on we will drop the parentheses from  our notation of $\cA$ and $\cA^*$: we will write $\cA x, \cA^*M$ instead of $\cA(x),\cA^*(M)$. 

Let
\begin{equation}
H_n=(\mathbb R^n,\langle\cdot,\cdot\rangle_n)\,, \qquad\qquad K_d=(\Symm,\langle\cdot,\cdot\rangle_d)\,.
\end{equation}
Fix $T\ge 0$. Suppose that the vectors
\[
  q^0,\ldots,q^T\in H_n\,,
  \qquad
  r^0,\ldots,r^T\in K_d\,,
\]
are generated by an alternating adaptive linear observation scheme as follows. 

Let $\mathscr F_0=\sigma(u^0)$. This sigma-field is independent of $\mathcal A$. 
For each $t=0,\ldots,T$, assume that $q^t$ is
$\mathscr F_t$-measurable, define
\begin{equation}\label{eq:sigma_fieldG}
  y^t:=\mathcal A q^t\,,
  \qquad
  \mathscr G_t:=\mathscr F_t\vee \sigma(y^t)\,,
\end{equation}
assume that $r^t$ is $\mathscr G_t$-measurable, and define
\begin{equation}\label{eq:sigma_fieldF}
  x^{t}:=\mathcal A^* r^t\,,
  \qquad
  \mathscr F_{t+1}:=\mathscr G_t\vee \sigma(x^{t})\,.
\end{equation}
In our application, we will take
\[
q^t = m^t = g_t(u^0,\ldots,u^t)\,,
\qquad
r^t = M^t = \sum_{s=0}^t a_{t,s} V^{s}\,.
\]  
 It can then be easily shown by induction that $V^t$ is $\mathscr G_t$-measurable and $u^{t}$ is $\mathscr F_t$-measurable for all $0 \le t\le T$.

\begin{lemma}\label{lem:adaptive-conditioning}
Let
\[
  Q_T:\mathbb R^{T+1}\to H_n,
  \qquad
  Q_Te_t=q^t\,,~~~~~ t \le T 
\]
and
\[
  R_T:\mathbb R^{T+1}\to K_d,
  \qquad
  R_Te_t=r^t\,,~~~~~ t \le T
\]
where $(e_t)_{t=0}^{T}$ is the standard basis of $\mathbb R^{T+1}$.
Let
\[
  Y_T:=\mathcal A Q_T,
  \qquad
  X_T:=\mathcal A^*R_T\,.
\]
Assume that $Q_{T}$ and $R_T$ have full rank almost surely. Define
\[
  G_{Q}:=Q_T^*Q_T\,,
  \qquad
  G_{R}:=R_T^*R_T\,,
\]
and
\[
  P_{Q}:=Q_TG_{Q}^{-1}Q_T^*\,,
  \qquad
  P_{R}:=R_TG_{R}^{-1}R_T^*\,.
\] 
Let $\widetilde{\mathcal A}$ be a copy of $\mathcal{A}$ independent of
$\sigma(\mathcal{A}) \vee \mathscr{F}_{T+1}$. If
\[
  q=Q_T\beta+h,
  \qquad
  h\perp \operatorname{Ran}(Q_T)\,,
\]
where $q$ is $\mathscr F_{T+1}$-measurable, then, conditionally on
$\mathscr F_{T+1}$,
\begin{equation}\label{eq:adaptive-domain-vector}
  \mathcal A q
  \stackrel{\mathrm d}{=}
  Y_T\beta
  +
  R_{T}G_{R}^{-1}X_{T}^*h
  +
  P_{R}^\perp\widetilde{\mathcal A}h\,.
\end{equation}
Here and below, $R_{T-1}$ and $X_{T-1}$ denote $R_T$ and $X_T$
with their last columns deleted; when $T=0$, the corresponding terms are
absent.  If
\[
  k=R_{T-1}\gamma+\ell\,,
  \qquad
  \ell\perp \operatorname{Ran}(R_{T-1})\,,
\]
where $k$ is $\mathscr G_T$-measurable, then, conditionally on
$\mathscr G_T$,
\begin{equation}\label{eq:adaptive-range-vector}
  \mathcal A^* k
  \stackrel{\mathrm d}{=}
  X_{T-1}\gamma
  +
  Q_{T}G_{Q}^{-1}Y_{T}^*\ell
  +
  P_{Q}^\perp\widetilde{\mathcal A}^*\ell\,.
\end{equation}
\end{lemma}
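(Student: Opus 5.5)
Our plan is the standard Bolthausen-type conditioning argument, adapted to the pair of Hilbert spaces $H_n$ and $K_d$. The starting point is isotropy: by \eqref{eq:cov-goe}, for fixed $x\in H_n$ and $M\in K_d$, the scalar $\ip{\cA x}{M}_d=\ip{x}{\cA^*M}_n$ is centered Gaussian with variance $\norm{x}_n^2\norm{M}_{2,d}^2$ (the factors $n^{-1}$ and $2/d^2$ combine with the normalizations so that the constant is exactly one). Hence $\cA$ is an isonormal Gaussian operator from $H_n$ to $K_d$: for orthogonal decompositions $H_n=E\oplus E^\perp$ and $K_d=F\oplus F^\perp$, the four blocks of $\cA$ are independent, and each is again isonormal between the corresponding subspaces.

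\textbf{Step 1 (deterministic-direction conditioning).} Fix deterministic full-rank $Q,R$ and condition on the linear event $\{\cA Q=Y,\ \cA^*R=X\}$. This event constrains exactly the blocks $P_R\cA$ and $\cA P_Q$, with the consistency requirement $R^*Y=X^*Q$, which holds automatically by adjointness. We decompose
\[
\cA = \cA P_Q + P_R\cA P_Q^\perp + P_R^\perp\cA P_Q^\perp\,.
\]
The first term is $YG_Q^{-1}Q^*$. The second is $P_R\cA P_Q^\perp = RG_R^{-1}(\cA^*R)^*P_Q^\perp = RG_R^{-1}X^*P_Q^\perp$. The third term is independent of the conditioning and distributed as $P_R^\perp\widetilde\cA P_Q^\perp$. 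Applying this decomposition to $q=Q\beta+h$ with $h\perp \operatorname{Ran}(Q)$ gives \eqref{eq:adaptive-domain-vector}. For the adjoint statement we use $\mathscr G_T$, which contains $Y_T$ but only $X_{T-1}$, so we replace $R$ by $R_{T-1}$; the analogous decomposition of $\cA^*$ then yields \eqref{eq:adaptive-range-vector}.

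\textbf{Step 2 (adaptivity).} Since $q^t$ and $r^t$ are measurable functions of earlier observations, we must show that conditioning on $\mathscr F_{T+1}$ coincides with conditioning on $\{\cA Q_T=Y_T,\ \cA^*R_T=X_T\}$ with $Q_T,R_T$ frozen at their realized values. We would argue by induction on $t$, in the spirit of Bolthausen and Bayati--Montanari. Suppose that, conditionally on $\mathscr F_t$ (respectively $\mathscr G_t$), $\cA$ has the law of Step 1 with the current $Q,R$. The next query direction is then measurable with respect to the conditioning, so the new observation is a Gaussian linear functional of the residual independent block. Computing its conditional density shows that the joint density of all observations factorizes, and the posterior of $\cA$ is the Gaussian measure restricted to the affine subspace cut out by all constraints. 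This is the same as Step 1 applied to the final $(Q_T,R_T)$. The independence of $u^0$ from $\cA$ (through $\mathscr F_0$) and the almost sure full-rank hypothesis on $Q_T$ and $R_T$ keep the Gram inverses well defined throughout.

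\textbf{Main obstacle.} We expect the difficulty to lie in Step 2: making rigorous that regular conditional laws given adaptively chosen directions equal the frozen-direction Gaussian conditional laws. We would handle this by working with regular conditional distributions. At each step we condition on the random directions, use their measurability with respect to the past, and invoke the deterministic Step 1 pointwise, so that no density computation on an infinite-dimensional space is needed. A secondary check is the $\cA^*$ variant, where $\mathscr G_T$ includes $y^T$ but not $x^T$; this is why $R_{T-1}$ appears together with the full $Q_T$.
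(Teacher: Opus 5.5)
Your proposal is correct and follows essentially the same route as the paper. The paper uses the same block decomposition $\cA = \cA P_Q + P_R\cA P_Q^\perp + P_R^\perp\cA P_Q^\perp$, with the revealed blocks identified as $YG_Q^{-1}Q^*$ and $RG_R^{-1}X^*P_Q^\perp$. It justifies this by an alternating induction in which each past-measurable direction only probes the still-fresh block $P_{L_t}^\perp\cA P_{S_t}^\perp$, and it stops at $\mathscr G_T$ (with $R_{T-1}$) for the adjoint formula.

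One harmless slip: by \eqref{eq:cov-goe}, the variance of $\ip{\cA x}{M}_d$ is $\frac{2}{d^2}\norm{x}_n^2\norm{M}_{2,d}^2$, not exactly $\norm{x}_n^2\norm{M}_{2,d}^2$. This does not affect the argument, since only isotropy up to a scalar is used.
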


\begin{proof}
For brevity write $Q=Q_T$, $R=R_T$, $Y=Y_T$, and $X=X_T$. We will first prove that conditionally on $\mathscr F_{T+1}$,
\begin{equation}\label{eq:adaptive-conditioning}
  \mathcal A
  \stackrel{\mathrm d}{=}
  Y G_Q^{-1}Q^* + R G_R^{-1}X^*P_Q^\perp + P_R^\perp\widetilde{\mathcal A}P_Q^\perp\,.
\end{equation}
This will prove \eqref{eq:adaptive-domain-vector}.  We prove the range-side formula~\eqref{eq:adaptive-range-vector} separately below by stopping the same conditioning argument at $\mathscr G_T$, before the final observation $x^T$ is revealed.

Let $\mathscr L(H_n,K_d)$ be the vector space of linear operators from $H_n$ to $K_d$ equipped with its Hilbert--Schmidt inner product: 
\[\langle B, C\rangle = \sum_{i=1}^n \ip{Bu_i}{Cu_i}_{d},\]
where $(u_i)$ is an orthonormal basis of $H_n$.
If $(F_j)$ is an orthonormal basis of $K_d$, then the coefficients
\[
  \langle \mathcal A u_i,F_j\rangle_d
\]
are independent centered Gaussian variables with common variance $2/d^2$. Indeed, we have $\mathcal A u_i \stackrel{\mathrm d}{=} A_i$, therefore each $\langle \cA u_i,F_j\rangle_d$ is a centered Gaussian. Moreover, we have by~\eqref{eq:cov-goe} that
\begin{align*}
\E\big[ \langle \cA u_i, F_j \rangle_d \langle \cA u_{i'}, F_{j'} \rangle_d \big] &= \frac1n \sum_{1 \le k, k' \le n} (u_i)_k (u_{i'})_{k'} \E\big[ \langle A_k, F_j \rangle_d \langle A_{k'}, F_{j'} \rangle_d \big] \\
&= \frac1n \one\{i=i', j=j' \} \sum_{k=1}^n \E\big[ \langle A_k, F_j \rangle_d^2 \big] = \frac2{d^2} \one\{i=i', j=j' \}\,,
\end{align*}
proving the assertion. Thus $\cA$ is a centered isotropic Gaussian element of $\mathscr L(H_n,K_d)$, up to the scalar variance $2/d^2$.

Next, we shall repeatedly use the following Gaussian fact. Let $E,F$ be finite-dimensional Hilbert spaces and let $B:E\to F$ be a centered isotropic Gaussian linear operator. If $h\in E$ is deterministic, then conditionally on $Bh$, the restriction of $B$ to $h^\perp$ is independent of $Bh$ and has an isotropic Gaussian law:
\begin{equation}\label{eq:one-step-domain}
  BP_{h^\perp}
  \stackrel{\mathrm d}{=}
  \widetilde B P_{h^\perp}\,,
\end{equation}
where $\widetilde B$ is an independent copy of $B$. Similarly, if $\ell\in F$ is deterministic, then conditionally on $B^*\ell$, 
\begin{equation}\label{eq:one-step-range}
  P_{\ell^\perp}B
  \stackrel{\mathrm d}{=}
  P_{\ell^\perp}\widetilde B\,.
\end{equation}
We now prove the adaptive conditioning statement. For $t=0,\ldots,T+1$, let
\[
  S_t:=\operatorname{span}\{q^0,\ldots,q^{t-1}\}\subset H_n\,,
  \qquad
  L_t:=\operatorname{span}\{r^0,\ldots,r^{t-1}\}\subset K_d\,,
\]
with the convention $S_0=L_0=\{0\}$. Let $P_{S_t}$ and $P_{L_t}$ denote the corresponding orthogonal projections.

We prove by induction that, conditionally on $\mathscr F_t$,
\begin{align}\label{eq:gaussianconditioning_induction_measurability}
  \mathcal A P_{S_t}
  \quad\text{and}\quad
  P_{L_t}\mathcal A
\end{align}
are $\mathscr F_t$-measurable, and the ``unrevealed block" satisfies
\begin{equation}\label{eq:residual-invariant}
  P_{L_t}^\perp \mathcal A P_{S_t}^\perp
  \stackrel{\mathrm d}{=}
  P_{L_t}^\perp \widetilde{\mathcal A} P_{S_t}^\perp\,,
\end{equation}
where $\widetilde{\mathcal A}$ is an independent copy of $\mathcal A$, independent of $\mathscr F_t$.

For $t=0$, this is immediate from the independence of $\mathscr F_0$ and $\mathcal A$, because $S_0=L_0=\{0\}$.

Now we assume~\eqref{eq:gaussianconditioning_induction_measurability},~\eqref{eq:residual-invariant} hold at time $t$, and decompose
\[
  q^t=P_{S_t}q^t+h^t\,, \qquad h^t:=P_{S_t}^\perp q^t\,.
\]
Therefore we may write
\begin{align*}
y^t=\mathcal A q^t = \mathcal A P_{S_t}q^t + P_{L_t}\mathcal A h^t + P_{L_t}^\perp \mathcal A h^t\,.
\end{align*}
Since $q^t$ and $S_t$ are $\mathscr F_t$-measurable, $h^t$ and $P_{S_t}q^t$ are $\mathscr F_t$-measurable. Thus by the inductive hypothesis, $\mathcal A P_{S_t}q^t$ and $P_{L_t}\mathcal A h^t$ are $\mathscr F_t$-measurable. Hence the only remaining random term conditionally on $\mathscr F_t$ is
\[
  P_{L_t}^\perp \mathcal A h^t
  =
  \big(P_{L_t}^\perp \mathcal A P_{S_t}^\perp\big)h^t\,.
\]

By the induction hypothesis, conditionally on $\mathscr F_t$, the operator $P_{L_t}^\perp \mathcal A P_{S_t}^\perp$ is a fresh isotropic Gaussian operator from $S_t^\perp$ to $L_t^\perp$. Applying the Gaussian conditioning fact~\eqref{eq:one-step-domain} to the vector $h^t$, we obtain that conditionally on $\mathscr F_t \vee \sigma( P_{L_t}^\perp \cA P_{S_t}^\perp h^t)= \mathscr F_t\vee\sigma(y^t) = \mathscr G_t$,
\begin{align}\label{eq:gaussianconditioning_induction_intermediatefreshgaussian}
  P_{L_t}^\perp \mathcal A P_{S_{t+1}}^\perp
  \stackrel{\mathrm d}{=}
  P_{L_t}^\perp \widetilde{\mathcal A}P_{S_{t+1}}^\perp\,,
\end{align}
with $\widetilde{\mathcal A}$ independent of $\mathscr G_t$. (The equality of the $\sigma$-algebras follows by our earlier remarks.) Moreover, $\mathcal A P_{S_{t+1}}$ is now $\mathscr G_t$-measurable, because $\mathcal A P_{S_t}$ is  $\mathscr F_t$-measurable and $y^t=\mathcal A q^t$ reveals the action of $\mathcal A$ in the new direction $q^t$. The operator $P_{L_t}\mathcal A$ is $\mathscr F_t$- and therefore $\mathscr G_t$-measurable.

Next, decompose
\[
  r^t=P_{L_t}r^t+\ell^t,
  \qquad
  \ell^t:=P_{L_t}^\perp r^t\,.
\]
Therefore
\begin{align*}
x^t = \mathcal A^*r^t = \mathcal A^*P_{L_t}r^t + P_{S_{t+1}} \cA^* \ell^t  + P_{S_{t+1}}^\perp \cA^* \ell^t\,.
\end{align*}
Recall that $r^t$ and $P_{L_t}\mathcal A$ are $\mathscr G_t$-measurable. Thus $\mathcal A^*P_{L_t}r^t$ is $\mathscr G_t$-measurable as well. In addition, the projection of $\mathcal A^*\ell^t$ onto $S_{t+1}$ is $\mathscr G_t$-measurable: for every $s\in S_{t+1}$,
\[
  \langle s,\mathcal A^*\ell^t\rangle_{n}
  =
  \langle \mathcal A s,\ell^t\rangle_{d}\,,
\]
and $\mathcal A s$, as well as $r^t, L_t$ and therefore $\ell^t$, are all $\mathscr G_t$-measurable. Therefore the only remaining random term conditionally on $\mathscr G_t$ is
\[
  P_{S_{t+1}}^\perp\mathcal A^*\ell^t
  =
  \big(P_{L_t}^\perp\mathcal A P_{S_{t+1}}^\perp\big)^*\ell^t\,.
\]

As shown by~\eqref{eq:gaussianconditioning_induction_intermediatefreshgaussian}, conditioned on $\mathscr G_t$, $P_{L_t}^\perp \widetilde{\mathcal A} P_{S_{t+1}}^\perp$ is a fresh isotropic Gaussian operator from $S_{t+1}^\perp$ to $L_t^\perp$. Applying the Gaussian conditioning fact~\eqref{eq:one-step-range} in the range direction $\ell^t$, we have that conditionally on
$\mathscr G_t\vee \sigma(\cA^* \ell^t) = \mathscr G_t\vee\sigma(x^t) = \mathscr F_{t+1}$,
\[
  P_{L_{t+1}}^\perp \mathcal A P_{S_{t+1}}^\perp
  \stackrel{\mathrm d}{=}
  P_{L_{t+1}}^\perp \widetilde{\mathcal A}P_{S_{t+1}}^\perp\,,
\]
with $\widetilde{\mathcal A}$ independent of $\mathscr F_{t+1}$.

Furthermore, $P_{L_{t+1}}\mathcal A$ is now $\mathscr F_{t+1}$-measurable. Indeed, $P_{L_t}\mathcal A$ is $\mathscr G_t$ and therefore $\mathscr F_{t+1}$-measurable, hence $\mathcal A^*P_{L_t}r^t$ is $\mathscr F_{t+1}$-measurable. Since $x^t=\mathcal A^*r^t$ together with $\mathcal A^*P_{L_t}r^t$ determines $\mathcal A^*\ell^t$, it follows that the component of $\mathcal A$ in the new range direction $\ell^t$, and hence $P_{L_{t+1}}\mathcal A$, is $\mathscr F_{t+1}$-measurable. As shown earlier, the operator $\mathcal A P_{S_{t+1}}$ is $\mathscr G_t$- and therefore $\mathscr F_{t+1}$-measurable. This completes the induction, proving~\eqref{eq:gaussianconditioning_induction_measurability},~\eqref{eq:residual-invariant}.

We now take $t=T+1$ in~\eqref{eq:residual-invariant}. Note we have
\[
  S_{T+1}=\operatorname{Ran}(Q)\,,
  \qquad
  L_{T+1}=\operatorname{Ran}(R)\,.
\]
Thus, conditionally on $\mathscr F_{T+1}$, we have
\begin{align}\label{eq:residual-invariant-result}
  P_R^\perp \mathcal A P_Q^\perp
  \stackrel{\mathrm d}{=}
  P_R^\perp\widetilde{\mathcal A}P_Q^\perp\,,
\end{align}
with $\widetilde{\mathcal A}$ independent of $\mathscr F_{T+1}$.

We now prove~\eqref{eq:adaptive-conditioning},~\eqref{eq:adaptive-domain-vector}. We first identify the deterministic, already revealed part of $\cA$. Since $Y=\mathcal A Q$,
\[
  \mathcal A P_Q
  =
  \mathcal A QG_Q^{-1}Q^*
  =
  YG_Q^{-1}Q^*\,.
\]
Similarly, since $X=\mathcal A^*R$, we have $X^*=R^*\mathcal A$, and hence
\[
  P_R\mathcal A
  =
  RG_R^{-1}R^*\mathcal A
  =
  RG_R^{-1}X^*\,.
\]
Therefore
\[
  P_R\mathcal A P_Q^\perp
  =
  RG_R^{-1}X^*P_Q^\perp\,.
\]
Using the orthogonal block decomposition
\[
  \mathcal A
  =
  \mathcal A P_Q
  +
  P_R\mathcal A P_Q^\perp
  +
  P_R^\perp\mathcal A P_Q^\perp\,,
\]
we obtain from~\eqref{eq:residual-invariant-result} that conditionally on $\mathscr F_{T+1}$,
\[
  \mathcal A
  \stackrel{\mathrm d}{=}
  YG_Q^{-1}Q^*
  +
  RG_R^{-1}X^*P_Q^\perp
  +
  P_R^\perp\widetilde{\mathcal A}P_Q^\perp\,.
\]
This proves \eqref{eq:adaptive-conditioning}.

Now let
\[
  q=Q\beta+h\,,
  \qquad
  h\perp\operatorname{Ran}(Q)\,.
\]
Then $Q^*h=0$ and $P_Q^\perp q=h$. Applying \eqref{eq:adaptive-conditioning} to $q$, we obtain
\[
\begin{aligned}
  \mathcal A q
  &\stackrel{\mathrm d}{=}
  YG_Q^{-1}Q^*(Q\beta+h)
  +
  RG_R^{-1}X^*P_Q^\perp(Q\beta+h)
  +
  P_R^\perp\widetilde{\mathcal A}P_Q^\perp(Q\beta+h) \\
  &=
  Y\beta
  +
  RG_R^{-1}X^*h
  +
  P_R^\perp\widetilde{\mathcal A}h\,.
\end{aligned}
\]
This proves \eqref{eq:adaptive-domain-vector}.

Finally, we prove~\eqref{eq:adaptive-range-vector}. Let
\[
 R_-:=R_{T-1}\,,\qquad X_-:=X_{T-1}\,,
 \qquad G_{R,-}:=R_-^*R_-\,,
 \qquad P_{R,-}:=R_-G_{R,-}^{-1}R_-^*\,.
\]
When $T=0$, all four objects are interpreted as the corresponding zero maps, and the terms containing them are absent. We stop the alternating conditioning induction after revealing $y^T=\cA q^T$, and before revealing $x^T=\cA^*r^T$.  At this point the known domain and range spaces are $\operatorname{Ran}(Q_T)$ and $\operatorname{Ran}(R_-)$, respectively. The same calculation used for \eqref{eq:adaptive-conditioning} gives, conditionally on $\mathscr G_T$,
\begin{equation}\label{eq:adaptive-conditioning-range-stop}
 \cA
 \stackrel{\mathrm d}{=}
 YG_Q^{-1}Q^*
 +R_-G_{R,-}^{-1}X_-^*P_Q^\perp
 +P_{R,-}^\perp\widetilde{\cA}P_Q^\perp\,,
\end{equation}
where $\widetilde{\cA}$ is independent of $\mathscr G_T$.  In our current notation, we may write
\[
  k=R_-\gamma+\ell\,, \qquad \ell\perp\operatorname{Ran}(R_-)\,.
\]
Taking adjoints in~\eqref{eq:adaptive-conditioning-range-stop} yields
\[
  \mathcal A^*
  \stackrel{\mathrm d}{=}
  QG_Q^{-1}Y^*
  +P_Q^\perp X_-G_{R,-}^{-1}R_-^*
  +P_Q^\perp\widetilde{\mathcal A}^*P_{R,-}^\perp\,.
\]
Therefore
\[
\begin{aligned}
  \mathcal A^*k
  &\stackrel{\mathrm d}{=}
  QG_Q^{-1}Y^*(R_-\gamma+\ell)
  +
  P_Q^\perp X_-G_{R,-}^{-1}R_-^*(R_-\gamma+\ell)
  +
  P_Q^\perp\widetilde{\mathcal A}^*P_{R,-}^\perp(R_-\gamma+\ell)  \\
  &=
  QG_Q^{-1}Y^*R_-\gamma
  +
  QG_Q^{-1}Y^*\ell
  +
  P_Q^\perp X_-\gamma
  +
  P_Q^\perp\widetilde{\mathcal A}^*\ell\,.
\end{aligned}
\]
Now observe that
\[
  Y^*R_-=Q^*\mathcal A^*R_-=Q^*X_-\,.
\]
This implies
\[
  QG_Q^{-1}Y^*R_-\gamma
  =
  QG_Q^{-1}Q^*X_-\gamma
  =
  P_QX_-\gamma\,.
\]
Combining with the earlier display thus gives
\begin{align}
\mathcal A^*k
  &\stackrel{\mathrm d}{=}
  P_QX_-\gamma
  +
  P_Q^\perp X_-\gamma
  +
  QG_Q^{-1}Y^*\ell
  +
  P_Q^\perp\widetilde{\mathcal A}^*\ell \notag \\
  &= X_-\gamma
  +
  QG_Q^{-1}Y^*\ell
  +
  P_Q^\perp\widetilde{\mathcal A}^*\ell\,. \notag
\end{align}
This proves \eqref{eq:adaptive-range-vector}, completing the proof of Lemma \ref{lem:adaptive-conditioning}.
\end{proof}

\begin{lemma}\label{lem:projected-innovation}
Let $Q:\R^p\to H_n$ and $R:\R^q\to K_d$ be random linear
operators where $p,q$ are fixed, and let $P_Q,P_R$ be the
orthogonal projections onto their ranges, and $h \in H_n$ and $k \in K_d$ be random elements. Let $\widetilde\cA$ be independent of $(Q,R,h,k)$ and have the same law as $\cA$. Then conditionally on $(Q,R,h,k)$,
\[
 \|P_R\widetilde\cA h\|_{\op}
 =O_{\Prob}\big(d^{-1/2}\|h\|_n\big)\,,
 \qquad
 \|P_Q\widetilde\cA^*k\|_n
 =O_{\Prob}\big(d^{-1}\|k\|_{2,d}\big)\,.
\]
Moreover, conditionally on $h$, $\widetilde\cA h$ has the law
$\|h\|_nW_d$, where $W_d$ is standard GOE. Conditionally on $k$,
the coordinates of $\widetilde\cA^*k$ are independent centered
Gaussians with variance $\alpha_d\|k\|_{2,d}^2$.
\end{lemma}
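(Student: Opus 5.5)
The plan is to condition on $(Q,R,h,k)$ throughout. Since $\widetilde\cA$ is independent of these, we may treat $Q,R,h,k$ as deterministic. Each claim then reduces to a Gaussian computation driven by the covariance identity \eqref{eq:cov-goe} and the isotropy of $\cA$ established in the proof of Lemma~\ref{lem:adaptive-conditioning}. I would prove the two distributional statements first and then derive the projected bounds from them.

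\emph{Laws.} For fixed $h$, $\widetilde\cA h=n^{-1/2}\sum_i h_iA_i$ is a linear combination of independent GOE matrices. It is therefore a centered Gaussian symmetric matrix whose entries are independent up to symmetry, with variances scaled by $n^{-1}\sum_i h_i^2=\|h\|_n^2$. This gives $\widetilde\cA h\stackrel{\mathrm d}{=}\|h\|_nW_d$; the case $h=0$ is trivial. For fixed $k$, the coordinates $\sqrt n\,\ip{A_i}{k}_d$ of $\widetilde\cA^*k$ are independent across $i$ because the $A_i$ are independent. They are centered Gaussian, and by \eqref{eq:cov-goe} each has variance $n\cdot\frac{2}{d^2}\|k\|_{2,d}^2=\alpha_d\|k\|_{2,d}^2$.

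\emph{Range-side projection.} Take an orthonormal basis $F_1,\dots,F_{q'}$ of $\operatorname{Ran}(R)$ in $K_d$, with $q'\le q$. Then
\[
P_R\widetilde\cA h=\sum_{j\le q'}\ip{F_j}{\widetilde\cA h}_d\,F_j .
\]
Two facts control this sum. First, $\tau_d(F_j^2)=1$, so every eigenvalue satisfies $\lambda^2\le d$, which gives $\|F_j\|_{\op}\le\sqrt d$. Second, $\ip{F_j}{\widetilde\cA h}_d$ is centered Gaussian with variance $\|h\|_n^2\cdot\frac{2}{d^2}$; this follows from the GOE law above together with \eqref{eq:cov-goe} applied to a single GOE matrix. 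Combining these with the triangle inequality,
\[
\|P_R\widetilde\cA h\|_{\op}\le\sum_{j\le q}|\ip{F_j}{\widetilde\cA h}_d|\,\sqrt d=O_{\Prob}\big(q\,d^{-1}\sqrt d\,\|h\|_n\big)=O_{\Prob}\big(d^{-1/2}\|h\|_n\big),
\]
since $q$ is fixed.

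\emph{Domain-side projection.} Take an orthonormal basis $e_1,\dots,e_{p'}$ of $\operatorname{Ran}(Q)$ in $H_n$. Then $\|P_Q\widetilde\cA^*k\|_n\le\sum_j|\ip{e_j}{\widetilde\cA^*k}_n|=\sum_j|\ip{\widetilde\cA e_j}{k}_d|$. Since $\|e_j\|_n=1$, the first part shows $\widetilde\cA e_j$ is a standard GOE. Hence $\ip{\widetilde\cA e_j}{k}_d$ is centered Gaussian with variance $\frac{2}{d^2}\|k\|_{2,d}^2$, and summing $p$ such terms gives $O_{\Prob}(d^{-1}\|k\|_{2,d})$.

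The only delicate point is the conditioning: the bases $F_j$ and $e_j$ are random but measurable with respect to $(Q,R)$, so they are independent of $\widetilde\cA$. The Gaussian variance computations are therefore valid conditionally. The $O_{\Prob}$ bounds then hold uniformly, because the conditional tail of each Gaussian coefficient does not depend on the conditioning. The step that needs the most care is the operator-norm estimate $\|F_j\|_{\op}\le\sqrt d$. This is where the loss of $\sqrt d$, and hence the rate $d^{-1/2}$ rather than $d^{-1}$, enters; the argument rests on the crude bound, which is enough here.
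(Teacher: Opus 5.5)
Your proposal is correct and takes essentially the same route as the paper. Both arguments expand in orthonormal bases of the finite-rank ranges, use \eqref{eq:cov-goe} to show the coefficients are centered Gaussians with variance $2\|h\|_n^2/d^2$ (resp.\ $2\|k\|_{2,d}^2/d^2$), and use the crude bound $\|F_j\|_{\op}\le\sqrt d$ on the matrix side. The only differences are cosmetic: you establish the two laws first and derive the bounds from them, and on the vector side you use the triangle inequality where the paper uses Pythagoras.
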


\begin{proof}
Condition on $(Q,R,h,k)$. Let $B_1,\ldots,B_r$ be an
$\langle\cdot,\cdot\rangle_d$-orthonormal basis of
$\operatorname{Ran}(R)$, where $r\le q$. Then
\[
 P_R\widetilde\cA h
 =\sum_{j=1}^r
 \langle B_j,\widetilde\cA h\rangle_d B_j\,.
\]
By \eqref{eq:cov-goe}, each coefficient is centered Gaussian with variance $2\|h\|_n^2/d^2$. Indeed, we have
\begin{align*}
\langle B_j, \widetilde \cA h \rangle_d = \frac1{\sqrt{n}} \sum_{i=1}^n h_i \langle B_j, \widetilde A_i \rangle_d \,,\qquad\qquad \Var\big( \langle B_j, \widetilde A_i \rangle_d \big) = \frac2{d^2} \langle B_j, B_j \rangle_d = \frac2{d^2} \,,
\end{align*}
where the second equality uses~\eqref{eq:cov-goe}. Combining these displays yields $\Var \langle B_j,\widetilde\cA h\rangle_d = 2\|h\|_n^2/d^2$. Moreover, $\|B_j\|_{\op}\le\sqrt d\|B_j\|_{2,d}=\sqrt d$. Since $r$ is fixed, we obtain
\[
 \|P_R\widetilde\cA h\|_{\op}
 \le\sum_{j=1}^r
 \big|\langle B_j,\widetilde\cA h\rangle_d \big| \cdot \|B_j\|_{\op}
 =O_{\Prob}\big(d^{-1/2}\|h\|_n \big)\,.
\]

Likewise, let $b_1,\ldots,b_s$ be an
$\langle\cdot,\cdot\rangle_n$-orthonormal basis of
$\operatorname{Ran}(Q)$, where $s\le p$. By properties of the adjoint,
\[
 P_Q\widetilde\cA^*k
 =\sum_{j=1}^s\langle\widetilde\cA b_j,k\rangle_d b_j\,.
\]
By the same rationale as above, using~\eqref{eq:cov-goe}, we obtain that each coefficient is centered Gaussian with variance
$2\|k\|_{2,d}^2/d^2$. Thus as $s$ is fixed,
\[
 \|P_Q\widetilde\cA^*k\|_n^2
 =\sum_{j=1}^s \big|\langle\widetilde\cA b_j,k\rangle_d \big|^2
 =O_{\Prob}\big(d^{-2}\|k\|_{2,d}^2 \big)\,.
\]

The distributional assertions follow from the same rationale as the variance calculations above, using~\eqref{eq:cov-goe}, and as $\widetilde \cA$ is independent of $h$ and $k$.
\end{proof}

\section{Gram matrix invertibility}
\label{sec:exact-rank}
The conditioning proof uses inverses of empirical and limiting Gram
matrices. In this section we prove their almost-sure invertibility.  

For any $t\ge 1$ let
\[
  S_t:=\Span\{m^0,\ldots,m^{t-1}\}\,,
 \qquad
  L_t:=\Span\{M^0,\ldots,M^{t-1}\}\,,
\]
 and recall the sigma-fields $\mathscr F_t, \mathscr G_t$ produced by the alternating observation scheme \eqref{eq:sigma_fieldG}--\eqref{eq:sigma_fieldF}.
The proof of Lemma~\ref{lem:adaptive-conditioning} gives the following statement.

\begin{lemma}
\label{lem:rank-free-block}
For any $t$, conditionally on the
revealed sigma-field $\mathscr F_{t-1}$,
\begin{equation}\label{eq:subspace-fresh-block}
 P_{\mathcal S_t}^{\perp}\cA P_{\mathcal S_t}^{\perp}
 \stackrel{\mathrm d}{=}
 P_{L_t}^{\perp}\widetilde\cA P_{S_t}^{\perp}\,,
\end{equation}
where $\widetilde\cA$ is an independent copy of $\cA$.  
\end{lemma}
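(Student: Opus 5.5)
The plan is to read this off the induction in the proof of Lemma~\ref{lem:adaptive-conditioning}, specialised to $q^t=m^t$, $r^t=M^t$, rather than build a new argument. I read the left-hand side of \eqref{eq:subspace-fresh-block} as $P_{L_t}^\perp\cA P_{S_t}^\perp$; the first projection is on the range side. In that proof, $S_t=\Span\{q^0,\ldots,q^{t-1}\}$ and $L_t=\Span\{r^0,\ldots,r^{t-1}\}$, which are exactly the subspaces here. The invariant \eqref{eq:residual-invariant} says that, conditionally on $\mathscr F_t$, the unrevealed block $P_{L_t}^\perp\cA P_{S_t}^\perp$ has the law of $P_{L_t}^\perp\widetilde\cA P_{S_t}^\perp$, with $\widetilde\cA$ independent of $\mathscr F_t$. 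Conditioning on the coarser $\mathscr F_{t-1}\subseteq\mathscr F_t$ is then a matter of averaging, which gives the claim.

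\textbf{Step 1.} Recheck the inductive step without the full-rank hypothesis on $Q_T,R_T$, which Lemma~\ref{lem:adaptive-conditioning} assumed and which this section is trying to establish. The induction itself only uses orthogonal projections onto spans and the one-step Gaussian facts \eqref{eq:one-step-domain}--\eqref{eq:one-step-range}. If $h^t=P_{S_t}^\perp q^t$ vanishes, then $S_{t+1}=S_t$ and nothing new is revealed, so the invariant persists trivially. The same holds if $\ell^t=0$. No inverse Gram matrices appear until the final assembly step, which we never use here. Hence \eqref{eq:residual-invariant} holds for every $t\le T+1$ unconditionally.

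\textbf{Step 2.} Pass from $\mathscr F_t$ to $\mathscr F_{t-1}$ via the tower property. For bounded measurable $\phi$ on operators and bounded $\mathscr F_t$-measurable $Z$, Step~1 gives
\[
\E\bigl[\phi(P_{L_t}^\perp\cA P_{S_t}^\perp)Z\bigr]=\E\Bigl[Z\,\E_{\widetilde\cA}\,\phi\bigl(P_{L_t}^\perp\widetilde\cA P_{S_t}^\perp\bigr)\Bigr].
\]
We take $Z$ to be $\mathscr F_{t-1}$-measurable. The inner expectation is a function of $(S_t,L_t)$ only, and $\widetilde\cA$ is independent of $\mathscr F_t\supseteq\mathscr F_{t-1}$. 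So this is the conditional joint law statement given $\mathscr F_{t-1}$, with $S_t,L_t$ regarded as random. This is the natural use of the lemma: given $\mathscr F_{t-1}$, the new directions $m^{t-1}$ and $M^{t-1}$ are still random, and the fresh block on their complements is an independent copy.

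The only real obstacle is Step~1: the proof of Lemma~\ref{lem:adaptive-conditioning} was stated under the full-rank assumption. I expect it to go through because the degenerate cases only shrink the revealed information. I would also check the claimed $\sigma$-algebra identity $\mathscr F_t\vee\sigma(P_{L_t}^\perp\cA h^t)=\mathscr G_t$ in those cases, and it holds since $y^t$ differs from $P_{L_t}^\perp\cA h^t$ by $\mathscr F_t$-measurable terms.
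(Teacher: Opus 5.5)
Your proposal is correct and is essentially the paper's argument. The paper gives no proof beyond noting that the statement is the invariant \eqref{eq:residual-invariant} from the induction in the proof of Lemma~\ref{lem:adaptive-conditioning}. Your Step~1 fills in a detail the paper leaves implicit: that induction never uses the full-rank hypothesis, which matters because Proposition~\ref{prop:exact-rank} uses this lemma to establish full rank. Your Step~2, the tower property from $\mathscr F_t$ down to $\mathscr F_{t-1}$, fills in the second such detail.

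One small correction to your closing remark: $m^{t-1}=q^{t-1}$ is already $\mathscr F_{t-1}$-measurable, so it is not still random given $\mathscr F_{t-1}$. Moreover, the uses in Proposition~\ref{prop:exact-rank} need the $\mathscr F_t$ version from your Step~1, together with its half-step analogue given $\mathscr G_{t-1}$ with range space $L_{t-1}$. The $\mathscr F_{t-1}$ in the statement is therefore best read as an index slip, not as the ``natural use'' you describe.
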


\begin{proposition}
\label{prop:exact-rank}
Assume \ref{ass:init}, \ref{ass:a1}, and
\ref{ass:transversality}.  If
$n\ge T+1$ and $D_d = \dim K_d \ge T+1$, then almost surely for every
$0\le t\le T$,
\begin{equation}\label{eq:exact-rank}
 \rank[m^0|\cdots|m^t]
 =\rank[M^0|\cdots|M^t]=t+1\,.
\end{equation}
\end{proposition}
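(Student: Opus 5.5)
We argue by induction on $t$, proving jointly that $\rank[m^0|\cdots|m^t]=t+1$ and $\rank[M^0|\cdots|M^t]=t+1$ almost surely. In the base case, $m^0=g_0(u^0)$ has nonzero entries by Assumption~\ref{ass:transversality}, so $m^0\ne0$. Next, $M^0=a_{0,0}V^0=a_{0,0}\cA m^0$ with $a_{0,0}\neq0$ by Assumption~\ref{ass:a1}. Conditionally on $u^0$, $\cA m^0$ has the law $\|m^0\|_n W_d$ by Lemma~\ref{lem:projected-innovation}, which is nonzero almost surely. Hence $M^0\ne0$.

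For the inductive step on the matrix side, assume $m^0,\ldots,m^t$ and $M^0,\ldots,M^{t-1}$ are linearly independent. We must show $M^t\notin L_t=\Span\{M^0,\ldots,M^{t-1}\}$. Write $M^t=a_{t,t}V^t+(\text{terms in }L_t)$, which uses the fact that $\ons_V^t\in L_t$. It therefore suffices to show $P_{L_t}^\perp\cA m^t\neq 0$ almost surely. Decompose $m^t=P_{S_t}m^t+h^t$ with $h^t\perp S_t$. By the induction hypothesis $h^t\ne0$, and $h^t$ is $\mathscr F_t$-measurable. Conditionally on $\mathscr F_t$, the proof of Lemma~\ref{lem:adaptive-conditioning} (see \eqref{eq:residual-invariant}) shows that $P_{L_t}^\perp\cA h^t \stackrel{\mathrm d}{=} P_{L_t}^\perp\widetilde\cA h^t$, where $\widetilde\cA h^t\sim\|h^t\|_nW_d$ is a nondegenerate Gaussian on $K_d$. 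The remaining part, $P_{L_t}^\perp\cA P_{S_t}m^t$, is $\mathscr F_t$-measurable. Hence $P_{L_t}^\perp\cA m^t$ is a nondegenerate Gaussian on $L_t^\perp$ shifted by a fixed vector. Since $\dim L_t^\perp=D_d-t\ge1$, this vector is nonzero almost surely. Because $a_{t,t}\ne0$, it follows that $M^t\notin L_t$.

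For the vector side, assume $m^0,\ldots,m^t$ and $M^0,\ldots,M^t$ are independent, and show $m^{t+1}\notin S_{t+1}$. Here Assumption~\ref{ass:transversality} enters: $m^{t+1}=r_{t+1}(u^0,\ldots,u^t)+c_{t+1}(u^0,\ldots,u^t)\odot u^{t+1}$ with $c_{t+1}>0$ entrywise. Conditionally on $\mathscr G_t$, apply \eqref{eq:adaptive-range-vector} to $k=M^t$, whose component $\ell^t\perp L_t$ is nonzero. This gives
\[u^{t+1}=\cA^*M^t-\ons_u^t=\xi+P_{S_{t+1}}^\perp\widetilde\cA^*\ell^t,\]
where $\xi$ is $\mathscr G_t$-measurable. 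By Lemma~\ref{lem:projected-innovation}, $\widetilde\cA^*\ell^t$ has i.i.d.\ $N(0,\alpha_d\|\ell^t\|_{2,d}^2)$ coordinates. So $u^{t+1}$ is a $\mathscr G_t$-measurable vector plus a nondegenerate Gaussian supported on the $(n-t-1)$-dimensional space $S_{t+1}^\perp$. Then $m^{t+1}=\rho+D\,u^{t+1}$, where $D=\diag(c_{t+1})$ is invertible and $\rho$ and $D$ are $\mathscr G_t$-measurable. The event $m^{t+1}\in S_{t+1}$ is the event $u^{t+1}\in D^{-1}(S_{t+1}-\rho)$, an affine subspace of dimension at most $t+1$.

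The main obstacle is showing this affine subspace meets the Gaussian's support $\xi+S_{t+1}^\perp$ in a set of lower dimension. The set $\{\xi+z: z\in S_{t+1}^\perp,\ D(\xi+z)+\rho\in S_{t+1}\}$ is an affine subspace of $\xi+S_{t+1}^\perp$. It is proper unless every $z\in S_{t+1}^\perp$ satisfies $Dz\in S_{t+1}$, that is, unless $D S_{t+1}^\perp\subseteq S_{t+1}$. Since $D$ is injective, this forces $n-t-1\le t+1$. When $n-t-1>t+1$ this cannot happen. In the small-$n$ case, or for a careful argument in general, I would instead use a dimension count: $DS_{t+1}^\perp$ has dimension $n-t-1$, while $S_{t+1}$ has dimension $t+1$ and $S_{t+1}\cap S_{t+1}^\perp=0$. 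Containment would then require $D S_{t+1}^\perp= S_{t+1}$ exactly, which is possible only if $n=2t+2$. I expect this edge case to be the delicate point. It might need the hypothesis used differently, for instance by exploiting that $\xi$ generic also randomizes $D$ through $u^t$ at the previous step. A proper affine subspace has Gaussian measure zero, so $m^{t+1}\notin S_{t+1}$ almost surely, which closes the induction.
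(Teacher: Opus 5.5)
The base case and the matrix step match the paper's argument. Keeping the $\mathscr F_t$-measurable shift $P_{L_t}^\perp\cA P_{S_t}m^t$, rather than observing $\cA(S_t)\subseteq L_t$, is harmless.

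The gap is in the vector step. There you only use that $D=\diag\bigl(c_{t+1}(u^0,\ldots,u^t)\bigr)$ is injective. From injectivity alone, the dimension count excludes $DS_{t+1}^\perp\subseteq S_{t+1}$ only when $n-t-1>t+1$. Your reduction of the remaining case to $n=2t+2$ is not correct. The inclusion does not force $DS_{t+1}^\perp=S_{t+1}$; it only requires $n-t-1\le t+1$. So the whole range $t+2\le n\le 2t+2$ is left open, and the hypothesis $n\ge T+1$ permits this range (e.g.\ $n=T+1$ at the last step $t+1=T$).

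Nor can injectivity close it. In $\R^2$ with $S=\Span\{(1,1)\}$, the invertible diagonal matrix $\diag(1,-1)$ maps $S^\perp$ onto $S$. As written, your argument proves the proposition only for $n\ge 2T+1$. The suggested remedy of randomizing $D$ through $u^t$ is not what is needed.

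The missing ingredient is the sign condition $c_{t+1}>0$ in Assumption~\ref{ass:transversality}, i.e.\ $D\succ0$. For nonzero $z\in S_{t+1}^\perp$,
\[
\ip{z}{Dz}_n=\frac1n\sum_{i=1}^n c_{t+1}(u_i^0,\ldots,u_i^t)\,z_i^2>0\,,
\]
whereas every element of $S_{t+1}$ is orthogonal to $z$. Hence $Dz\notin S_{t+1}$. Equivalently, the compression $z\mapsto P_{S_{t+1}}^\perp Dz$ is a positive-definite, hence invertible, linear map of $S_{t+1}^\perp$ into itself. Consequently
\[
P_{S_{t+1}}^\perp m^{t+1}
=P_{S_{t+1}}^\perp(\rho+D\xi)
+P_{S_{t+1}}^\perp D\,P_{S_{t+1}}^\perp\widetilde\cA^*\ell^t
\]
is, conditionally on $\mathscr G_t$, a $\mathscr G_t$-measurable shift of a nondegenerate Gaussian on $S_{t+1}^\perp$. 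That space has dimension $n-t-1\ge1$, since $t+1\le T$ and $n\ge T+1$. So $P_{S_{t+1}}^\perp m^{t+1}$ is nonzero almost surely, for every admissible $n$ and without any case distinction.

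This is exactly the step the paper compresses into the assertion that $h^t=P_{S_t}^\perp m^t$ has a conditional density on $S_t^\perp$. It is the positivity of $D_t$, not merely its invertibility, that makes that assertion true.
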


\begin{proof}
At time $t=0$, $m^0 \neq 0$ by Assumption~\ref{ass:transversality}, and
$M^0=a_{0,0}\cA(m^0)\ne0$ almost surely.

Suppose $1\le t\le T$ and the histories preceding time $t$ have full
rank.  Conditionally on the ``half-step" $\mathscr G_{t-1}$, Lemma~\ref{lem:rank-free-block}
gives
\begin{equation}\label{eq:exact-rank-vector-innovation}
 u^t\stackrel{\mathrm d}{=}
 \mu^t+P_{S_{t}}^{\perp}\widetilde\cA^*K^{t-1},
 \qquad
 K^{t-1}:=P_{L_{t-1}}^{\perp}M^{t-1}\ne0,
\end{equation}
where $\mu^t$ is $\mathscr G_{t-1}$-measurable and $\widetilde\cA$ is independent
of $\mathscr G_{t-1}$.  The second term is centered Gaussian
on $ S_t^\perp$, with conditional covariance
\[
 \alpha_d\|K^{t-1}\|_{2,d}^2P_{ S_t}^{\perp}\,.
\]
It is therefore nondegenerate on that space.  By
\eqref{eq:transverse-form} we have
\[
 m^t=r_t(u^0,\ldots,u^{t-1})+D_tu^t\,,
 \qquad
 D_t:=\diag\bigl(c_t(u_i^0,\ldots,u_i^{t-1})\bigr)_{i\le n}\succ 0\,.
\]
in the coordinatewise sense. Let $h^t=P_{\mathcal S_t}^\perp m^t$.
Since $D_t$ is invertible, $h^t$ has a conditional density on $S_t^\perp$.  It is therefore nonzero almost surely.
Moreover, triangularity and
$a_{t,t}\ne0$ imply
\[
 \Span\{M^0,\ldots,M^{t-1}\}
 =\Span\{V^0,\ldots,V^{t-1}\}\,,
 \qquad \cA(S_t)\subseteq L_t\,.
\]
The matrix Onsager correction $\ons_V^t$ also belongs to $\mathcal L_t$,
hence
\begin{equation}\label{eq:exact-rank-matrix-innovation}
 P_{L_t}^\perp M^t
 =a_{t,t}P_{L_t}^\perp\cA h^t.
\end{equation}
By Lemma~\ref{lem:rank-free-block}, the right-hand side is conditionally
a nondegenerate Gaussian in $\mathcal L_t^\perp$, so it is nonzero
almost surely.  This closes the induction.  
\end{proof}

\begin{lemma}
\label{lem:limiting-Gram-positivity}
For the state evolution under Assumptions~\ref{ass:init},
\ref{ass:a1}, \ref{ass:g}, and \ref{ass:transversality}, for every
$t\le T$,
\[
 \Gamma_{G,t}:=(\E[G^rG^s])_{0\le r,s\le t}\succ0\,,
 \qquad
 \Gamma_{W,t}:=(\taup(W^rW^s))_{0\le r,s\le t}\succ0\,.
\]
\end{lemma}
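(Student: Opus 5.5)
We prove both positivity statements simultaneously by induction on $t$, mirroring the finite-$d$ argument of Proposition~\ref{prop:exact-rank} but at the level of the limiting process. The key observation is that $\Gamma_{G,t}$ is the Gram matrix of $G^0,\ldots,G^t$ in $L^2(\Prob)$, and $\Gamma_{W,t}$ is the Gram matrix of $W^0,\ldots,W^t$ in $L^2(\mathcal N,\taup)$. Positive definiteness is therefore equivalent to linear independence of these families. Since the matrix $(a_{t,s})$ is lower triangular with $|a_{t,t}|\ge a_*>0$ (Assumption~\ref{ass:a1}), $\Gamma_{W,t}\succ0$ is equivalent to linear independence of $S^0,\ldots,S^t$. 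By \eqref{eq:covS}, the Gram matrix of $S^0,\ldots,S^t$ is exactly $\Gamma_{G,t}$. Hence $\Gamma_{W,t}=A_t\Gamma_{G,t}A_t^\top$ with $A_t=(a_{r,s})_{r,s\le t}$ invertible, and it suffices to prove $\Gamma_{G,t}\succ0$ for all $t\le T$.

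\textbf{Base case.} At $t=0$ we have $\E[(G^0)^2]>0$, because $g_0\ne0$ everywhere by Assumption~\ref{ass:transversality}.

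\textbf{Inductive step.} Assume $\Gamma_{G,t-1}\succ0$, so that $\Gamma_{W,t-1}\succ0$ as well. We must show that $G^t$ is not in the $L^2$-span of $G^0,\ldots,G^{t-1}$.
\begin{enumerate}[label=(\roman*)]
\item First we show that $U^t$ has a nondegenerate Gaussian component independent of $(U^0,\ldots,U^{t-1})$. By \eqref{eq:covU}, the covariance of $(U^1,\ldots,U^t)$ is $\alpha\Gamma_{W,t-1}\succ0$. Hence the conditional law of $U^t$ given $(U^1,\ldots,U^{t-1})$ is $N(\mu,v)$, where $\mu$ is linear in the conditioning variables and $v>0$ is the Schur complement. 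Since $U^0$ is independent of $(U^1,\ldots,U^{T+1})$, this remains the conditional law given $(U^0,\ldots,U^{t-1})$.
\item Next we use the form \eqref{eq:transverse-form}: $G^t=r_t(\cdot)+c_t(\cdot)U^t$ with $c_t>0$, where $r_t,c_t$ are evaluated at $(U^0,\ldots,U^{t-1})$. Suppose, for contradiction, that $G^t=\sum_{s<t}\beta_sG^s$ almost surely. Then $c_t(U^0,\ldots,U^{t-1})U^t$ would be almost surely equal to a measurable function of $(U^0,\ldots,U^{t-1})$.
\item Conditioning on $(U^0,\ldots,U^{t-1})$, the variable $c_tU^t$ has law $N(c_t\mu,c_t^2v)$ with $c_t^2v>0$. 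A non-degenerate Gaussian is not almost surely constant, so this is a contradiction. Therefore $G^t$ is linearly independent of $G^0,\ldots,G^{t-1}$, which gives $\Gamma_{G,t}\succ0$ and, by the factorization above, $\Gamma_{W,t}\succ0$.
\end{enumerate}

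\textbf{Well-posedness and main obstacle.} We also need the construction to be well posed. The covariances \eqref{eq:covU}--\eqref{eq:covS} define positive semidefinite matrices at each step, so the Gaussian vector and the semicircular family exist, and $G^t\in L^2$ by Assumption~\ref{ass:g} together with the moment assumptions on $U^0$. The main technical point is step (i). It relies on the independence of $U^0$ from the Gaussian block, and on the observation that only $\Gamma_{W,t-1}$, not $\Gamma_{W,t}$, is needed to make $U^t$ nondegenerate. This is what allows the induction to close in the interleaved order $G\to W\to U\to G$.
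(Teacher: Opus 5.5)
Your proposal is correct and follows essentially the same route as the paper. The paper also proceeds by induction on $t$, uses the factorization $\Gamma_{W,t-1}=A_{t-1}\Gamma_{G,t-1}A_{t-1}^{\top}$ to make $U^t$ nondegenerate given $(U^0,\ldots,U^{t-1})$, and then uses the affine form $G^t=r_t+c_tU^t$ with $c_t>0$; the only difference is that the paper bounds the Schur complement quantitatively via the law of total variance, $\E[(G^t-\sum_{s<t}b_sG^s)^2]\ge v_t\E[c_t^2]>0$, whereas you argue by contradiction that a nondegenerate conditional Gaussian cannot almost surely equal a function of the past.
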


\begin{proof}
The first Gram matrix is positive at time zero because $G^0\ne0$ almost
surely.  If $\Gamma_{G,t-1}\succ0$, then
\[
 \Gamma_{W,t-1}=A_{t-1}\Gamma_{G,t-1}A_{t-1}^{\top}\succ0
\]
where $A_{t-1}=(a_{r,s})_{0\le s\le r<t}$.  This matrix is triangular
with nonzero diagonal.  The covariance identity~\eqref{eq:covU} implies that  
the Gaussian vector $(U^1,\ldots,U^t)$ has positive-definite
covariance, and
\[
 v_t:=\Var(U^t\mid U^1,\ldots,U^{t-1})>0\,.
\]
For every $b\in\R^t$, the law of total variance implies
\[\E\left[\left(G^t-\sum_{s<t}b_sG^s\right)^2\right]
 \ge \E[\Var(G^t\mid U^0,\ldots,U^{t-1})]\,,\]
while from \eqref{eq:transverse-form} we have
\begin{equation*}
 G^t =r_t(U^0,\ldots,U^{t-1})
  +c_t(U^0,\ldots,U^{t-1})U^t\,,
 \qquad c_t>0\,.
\end{equation*}
Therefore the above variance is no less than
\[v_t\E[c_t(U^0,\ldots,U^{t-1})^2]>0\,.\]
This proves positivity of the next Schur complement of $\Gamma_{G,t}$ (by taking $b = \Gamma_{G,t-1}^{-1}(\E[G^sG^t])_{s<t}$).
Induction proves both claims.
\end{proof}

\section{Proof of state evolution}
\label{sec:proof-nondeg}
This is the section where we prove ``one half" of Theorem~\ref{thm:main-general} stated below: 
\begin{theorem}
\label{thm:direct_SE}
The following holds under Assumptions \ref{ass:aspect}, \ref{ass:init},
\ref{ass:a1}, \ref{ass:g}, and \ref{ass:transversality}.  For fixed $T \ge 1$, let $(u^{t+1},m^{t},V^{t},M^{t})_{t \le T}$ be generated by
the AMP iteration \eqref{eq:AMP0}--\eqref{eq:AMP1}.  Then, 
\begin{align}
  \bigl(u_i^0,\ldots,u_i^{T+1},m_i^0,\ldots,m_i^{T}\bigr)_{i\le n}
  &\xrightarrow{\mathrm{emp}}
  \bigl(U^0,\ldots,U^{T+1},G^0,\ldots,G^T\bigr)\,,\label{eq:vector-SE-main1}\\
\mbox{and}\qquad
  (V^0,\ldots,V^T,M^0,\ldots,M^T)
  &\xrightarrow{\mathrm{str}}
  (S^0,\ldots,S^T,W^0,\ldots,W^T)\,,\label{eq:matrix-SE-main1}
\end{align}
where the limiting process
$(U^0,\ldots,U^{T+1},S^0,\ldots,S^T)$ has covariances given by
\eqref{eq:covU}--\eqref{eq:covS}.
\end{theorem}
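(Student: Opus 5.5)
We will prove Theorem~\ref{thm:direct_SE} by induction on $t$, following the Bolthausen conditioning scheme. The induction hypothesis $\mathcal H_t$ has four parts. First, the vector-side empirical convergence of $(u^0,\ldots,u^t,m^0,\ldots,m^{t-1})$. Second, strong convergence of $(V^0,\ldots,V^{t-1},M^0,\ldots,M^{t-1})$. Third, convergence of the empirical Gram matrices $(\ip{m^r}{m^s}_n)$ and $(\ip{M^r}{M^s}_d)$ to $\Gamma_{G}$ and $\Gamma_W$. Fourth, convergence of the empirical cross inner products of $u^s$ with $m^r$ to their limits.

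Each step is split into a $V$-half-step and a $u$-half-step. Proposition~\ref{prop:exact-rank} makes all Gram inverses well defined almost surely. Lemma~\ref{lem:limiting-Gram-positivity} makes the limiting inverses bounded, so the empirical inverses converge.

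\emph{$V$-half-step.} We write $m^t=Q_{t-1}\beta+h^t$ with $h^t\perp S_t$. By $\mathcal H_t$, $\beta\to\beta_\infty:=\Gamma_{G,t-1}^{-1}(\E[G^sG^t])_s$ and $\|h^t\|_n^2\to$ the Schur complement, which is positive. Applying~\eqref{eq:adaptive-domain-vector} conditionally on $\mathscr F_t$ gives
\[
\cA m^t \stackrel{\mathrm d}{=} Y\beta + R G_R^{-1}X^*h^t + \widetilde\cA h^t - P_R\widetilde\cA h^t\,.
\]
The last term is negligible in operator norm by Lemma~\ref{lem:projected-innovation}. The term $Y\beta=\sum_s\beta_s\cA m^s$ is a fixed linear combination of earlier $V^s$ and $M^{s-1}$. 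In the middle term, $X^*h^t$ consists of inner products $\ip{u^{s+1}+\ons_u^s}{h^t}_n$. Using the vector-side hypothesis and Stein's lemma in the form $\E[U^{s+1}G^r]=\sum_k\E[U^{s+1}U^k]\E[\partial_kG^r]$, these inner products produce exactly $\alpha_d\sum_r d_{t,r}M^{r-1}$ up to $o_P(1)$ coefficients. This is the Onsager cancellation, and it is the algebraic heart of the step.

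The outcome is
\[
V^t = \sum_{s<t}c_s V^s + \|h^t\|_n W_d + E_t\,,
\]
where $W_d$ is a GOE matrix independent of $\mathscr F_t$, $\|E_t\|_{\op}\to0$ in probability, and $c_s\to$ deterministic limits. We then invoke strong asymptotic freeness: an independent GOE matrix is strongly asymptotically free from any deterministic family that converges strongly. This is the Male / Collins--Guionnet--Parraud extension of Haagerup--Thorbj{\o}rnsen, applied conditionally on $\mathscr F_t$. It gives strong convergence of $(V^0,\ldots,V^t)$ to $(S^0,\ldots,S^{t-1},\,\sum_s c_sS^s+\sqrt{\mathrm{Schur}}\,S')$ with $S'$ free. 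Checking the covariance identifies this limit as the semicircular family with covariance~\eqref{eq:covS}. The $M^t$ are fixed linear combinations, and small operator-norm perturbations preserve strong convergence.

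\emph{$u$-half-step.} This is symmetric, using~\eqref{eq:adaptive-range-vector} with $k=M^t=R_{t-1}\gamma+\ell$. The fresh term $\widetilde\cA^*\ell$ has i.i.d.\ $N(0,\alpha_d\|\ell\|_{2,d}^2)$ coordinates by Lemma~\ref{lem:projected-innovation}, and $P_Q\widetilde\cA^*\ell$ is $O_P(d^{-1})$. The term $QG_Q^{-1}Y^*\ell$ involves $\ip{\cA m^r}{\ell}_d$. Expanding $\cA m^r=V^r+\ons_V^r$ and using the freeness just established, this reproduces $\sum_r a_{t,r}m^r$ up to vanishing error, which cancels $\ons_u^t$. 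The remaining $X_{t-1}\gamma$ is a combination of earlier $u^{s+1}+\ons_u^s$. So $u^{t+1}$ equals a linear combination of previous iterates plus an independent Gaussian vector plus a vanishing error. The standard pseudo-Lipschitz argument (law of large numbers conditional on $\mathscr G_t$, plus Cauchy--Schwarz for errors, using Assumption~\ref{ass:g}) gives the empirical convergence with covariance~\eqref{eq:covU}. Since $d_{t,r}$ are averages of the pseudo-Lipschitz $\partial_rg_t$, they converge to $\E\partial_rG^t$.

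\emph{Main obstacle.} The hard step is the strong-convergence half-step. Only finitely many error terms arise, but one must show that errors such as $RG_R^{-1}X^*h^t$ minus the Onsager term vanish in \emph{operator norm}, not merely in normalized trace. The difficulty is that $\|M^s\|_{\op}$ is bounded while the coefficient errors are $o_P(1)$; I will bound each error by (coefficient error)$\times\max_s\|M^s\|_{\op}$ and use that $\|M^s\|_{\op}$ is tight by the induction hypothesis. The other delicate point is applying strong asymptotic freeness conditionally on a random, data-dependent family. For that I will use a subsequence/almost-sure coupling argument, so that the conditioning family converges strongly almost surely along subsequences.
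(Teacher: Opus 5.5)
Your proposal follows essentially the paper's proof: a Bolthausen-type induction built on the adaptive conditioning Lemma~\ref{lem:adaptive-conditioning}, Stein's lemma for the matrix-side Onsager cancellation, the identity $Y_t^*K^t=G_{V,t}\widetilde a_t$ on the vector side, strong asymptotic freeness of an independent GOE from a strongly converging family, and the conditional law of large numbers (Lemma~\ref{lem:conditional-empirical}); your extra Gram and cross-product hypotheses already follow from the empirical and strong convergence parts. Two small corrections: first, $RG_R^{-1}X^*h^t$ alone equals $\alpha_d R_{t-1}\bigl(d_t^{(t)}-\sum_{k<t}(\beta_t)_k d_k^{(t)}\bigr)+o_{\op}(1)$, so $\ons_V^t$ appears only after adding the $\ons_V^k$ pieces of $Y\beta$, and symmetrically $QG_Q^{-1}Y^*\ell\approx Q\widetilde a_t$ must be combined with the $\ons_u^s$ pieces of $X_{t-1}\gamma$ (otherwise leftover terms spoil the covariance check); second, Male and Collins--Guionnet--Parraud are GUE results, and the GOE statement you need is Schultz / Fan--Sun--Wang (Proposition~\ref{prop:strong-free}), which already allows a random independent family and so makes your subsequence argument unnecessary.
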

The terminal step $\widehat m$ together with the stability of $\cA(\widehat m - m)$ will be proved separately in Section~\ref{sec:adapted-terminal-query}.

We will use the strong convergence of independent GOE matrices jointly with an already strongly converging array of matrices.   
The statement used is Theorem 4.3 in Fan, Sun and Wang~\cite{fan2021principal} which we reproduce here:

\begin{proposition}[\cite{fan2021principal}]\label{prop:strong-free}
Let $W_1^d,\ldots,W_k^d$ be independent standard GOE matrices with diagonal and off-diagonal variances $2/d$ and $1/d$ respectively, independent of deterministic or random matrices $B_1^d,\ldots,B_\ell^d$. Suppose strong convergence of the latter array in noncommutative distribution: 
\[(B_1^d,\ldots,B_\ell^d) \xto{\mathrm{str}} (b_1,\ldots,b_\ell)\,.\]   
Then we have the joint strong convergence
\[
  (W_1^d,\ldots,W_k^d,B_1^d,\ldots,B_\ell^d)
\xrightarrow{\mathrm{str}} 
  (s_1,\ldots,s_k,b_1,\ldots,b_\ell)\,,
\]
where $s_1,\ldots,s_k$ are freely independent standard semicircular variables, free from $(b_1,\ldots,b_\ell)$.
\end{proposition}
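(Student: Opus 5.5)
Since this is a known result, the plan is to reduce it to the deterministic case and then quote or reprove the strong asymptotic freeness theorem for Gaussian matrices with deterministic coefficients, in the spirit of Male and Collins--Male, with Schultz's extension to the orthogonal ensemble.

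\textbf{Reduction to deterministic matrices.} Both conclusions in Definition~\ref{def:strong} are statements about convergence in probability of countably many scalar sequences, one per noncommutative polynomial with rational coefficients; real coefficients then follow by continuity. A diagonal subsequence argument therefore reduces the claim to the following: along any subsequence there is a further subsequence on which $(B^d_j)$ converges strongly almost surely. Since the $W_i^d$ are independent of the $B^d_j$, we may condition on the $B$'s and assume them deterministic, with $\taud(P(B^d))\to\taup(P(b))$ and $\|P(B^d)\|_{\op}\to\|P(b)\|$ for every $P$. By the triangle inequality $\|B^d_j\|_{\op}$ is uniformly bounded, and convergence in probability can then be recovered by dominated convergence applied to the conditional probabilities.

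\textbf{Trace convergence and the lower bound on the norm.} Convergence of $\taud P(W^d,B^d)$ to $\taup P(s,b)$ is Voiculescu's asymptotic freeness of GOE matrices from deterministic uniformly bounded matrices, in Dykema's form for the real case. We first check it in expectation by Wick expansion: only non-crossing pairings survive at order $d^0$, and the orthogonal correction terms are $O(1/d)$. Gaussian concentration for the Lipschitz function $\taud P$, restricted to the high-probability event $\{\max_i\|W_i^d\|_{\op}\le 3\}$, upgrades this to convergence in probability. The lower bound $\liminf\|P(W^d,B^d)\|_{\op}\ge\|P(s,b)\|$ then follows from trace convergence applied to $(P^*P)^p$ together with faithfulness of $\taup$ and the formula~\eqref{eq:norm_N}.

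\textbf{Upper bound on the norm: the main obstacle.} It suffices to show $\limsup\|P(W^d,B^d)\|_{\op}\le\|P(s,b)\|$. We plan to follow Male's route.
\begin{enumerate}[label=(\roman*)]
\item Apply the linearization trick to reduce to degree-one matrix-valued polynomials $L=a_0\otimes I+\sum_i a_i\otimes W_i^d+\sum_j c_j\otimes B_j^d$ with self-adjoint coefficients of fixed size.
\item Prove that for $\lambda$ at fixed positive distance from the spectrum of the limiting operator, the matrix-valued Stieltjes transform $\E(\lambda-L)^{-1}$ is within $O(d^{-2})$ of its free (operator-valued subordination) counterpart $G(\lambda)$. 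For the GOE this uses the Schwinger--Dyson equation obtained from Gaussian integration by parts; the orthogonal term contributes $O(1/d)$ corrections that must be tracked, as in Schultz's argument.
\item Combine this with the fact that $G$ extends analytically across a neighbourhood of any $\lambda$ outside the support. Here we use that the deterministic part converges strongly, so it produces no outliers. The Haagerup--Thorbj\o rnsen smooth test-function argument then shows that no eigenvalues of $L$ lie at distance more than $\eps$ from the limiting spectrum, with probability tending to one.
\end{enumerate}

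Alternatively, and probably more cleanly, one can invoke the recent intrinsic-freeness or interpolation methods (Bandeira--Boedihardjo--van Handel, Parraud). These control $\E\|P(W^d,B^d)\|_{\op}$ directly against $\|P(s,b)\|$ up to $o(1)$, and concentration then finishes the argument. The delicate points in either route are the $1/d$ bookkeeping specific to GOE and the uniformity of the resolvent estimates in the deterministic matrices $B^d$.
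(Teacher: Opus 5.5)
The paper gives no proof of its own. It imports the statement as Theorem~4.3 of \cite{fan2021principal}, and also points to Male \cite{male2012norm} for the GUE case and to Schultz \cite{schultz2005non} for the GOE version of Haagerup--Thorbj{\o}rnsen. So your ``quote'' option is exactly what the paper does. Your reduction to deterministic $B^d$ is correct: rational polynomials, sub-subsequences along which $B^d$ converges strongly almost surely, conditioning via independence, and dominated convergence of the conditional probabilities. It usefully makes explicit how a deterministic statement yields the in-probability notion of Definition~\ref{def:strong}. Trace convergence and the norm lower bound are also fine. Read as a reproof, however, the upper-bound sketch has two genuine gaps.

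\textbf{The $1/d$ term for GOE.} Your claim that $\E(\lambda-L)^{-1}$ is within $O(d^{-2})$ of the free resolvent is false for the GOE. Already $\E\,\taud\big((W^d)^2\big)=1+1/d$ under~\eqref{eq:GOE}, and in general $\E[W^dMW^d]=\taud(M)I+d^{-1}M^{\top}$. So the discrepancy is of exact order $1/d$, and this is not bookkeeping. The Haagerup--Thorbj{\o}rnsen counting step bounds the probability of an eigenvalue in a compact $K$ away from the limiting spectrum by $\E\Tr\phi(L)$, where $\phi\ge\ind_K$ is smooth and vanishes near that spectrum. This quantity is of order $d$ times a normalized trace, so an $O(1/d)$ error gives nothing. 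What must be proved is an expansion whose $1/d$ coefficient, generated by the transpose term, vanishes asymptotically on such $\phi$. In Schultz's setting (no $B^d$), this is shown by proving the correction is a distribution supported on the limiting spectrum. That is precisely what must be redone in the presence of $B^d$. Also, the resolvent estimates must be proved for $\operatorname{Im}\lambda>0$ with polynomial dependence on $1/\operatorname{Im}\lambda$. At real $\lambda$ away from the limiting spectrum, the random resolvent is not a priori controlled; that control is the conclusion you are after.

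\textbf{Transfer from $B^d$ to $b$.} Since $B^d\to b$ comes with no rate, no $O(d^{-2})$ comparison with a free model built on the limit $b$ is possible. The comparison object must be the free model over $B^d$ itself, with the $W_i^d$ replaced by semicirculars free from $M_d$. You then still need the spectrum of this finite-$d$ free model to lie eventually in an $\eps$-neighbourhood of the limiting one. That is, strong convergence of $B^d$ must be inherited by $(s,B^d)$ with $s$ free from $B^d$: strong convergence must be stable under free products with a semicircular family. Your sentence ``the deterministic part converges strongly, so it produces no outliers'' hides this step, which needs its own argument. Your intrinsic-freeness route has the same missing link. Those bounds compare $\|L\|_{\op}$ with the norm of the free model built on $B^d$, not with $\|P(s,b)\|$, and that model moreover carries the GOE $d^{-1}$ transpose term in its covariance. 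So either cite \cite{fan2021principal} as the paper does, or supply these two steps.
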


The GUE version of this result was proved earlier by Male~\cite{male2012norm}, and it upgrades the breakthrough result of strong convergence to GUE matrices by Haagerup--Thorbjornsen~\cite{haagerup2005new}. Schultz~\cite{schultz2005non} then proved the GOE version of the result in~\cite{haagerup2005new}.

\subsection{The induction argument}
 We prove Theorem~\ref{thm:direct_SE} by induction on time, following Bolthausen's conditioning argument~\cite{bolthausen2014iterative}.
We spell out our induction hypothesis:  
\begin{assumption}[Induction hypothesis $\mathcal{H}_t$]\label{ass:induction-hypotheses}
At time $t \ge 0$ assume
\begin{enumerate}[label=(\roman*)]
\item empirical convergence with all polynomial moments of the enlarged
coordinate array:
\[
  (u_i^0,\ldots,u_i^t,m_i^0,\ldots,m_i^t)_{i\le n}
\xto{\mathrm{emp}}
(U^0,\ldots,U^t,G^0,\ldots,G^t)\,;
\]
\item strong convergence of the matrix array in noncommutative distribution: 
\[
(V^0,\ldots,V^{t-1},M^0,\ldots,M^{t-1}) 
\xto{\mathrm{str}}
(S^0,\ldots,S^{t-1},W^0,\ldots,W^{t-1})\,.
\]
\end{enumerate}
These assumptions will be collectively denoted by $\mathcal{H}_t$.
\end{assumption}


Let us now record a few lemmas for future reference. 

\begin{lemma}
\label{lem:emp-calc}
Let $X_1^d,\ldots,X_n^d\in\R^q$ and suppose
\[
  (X_i^d)_{i\le n}\xto{\mathrm{emp}}X\,,
\]
Let
$f:\R^k\to\R$ be such that $f$ and the functions
$\partial_1 f,\ldots,\partial_{k}f$ are pseudo-Lipschitz of finite
order.  Then
\[
 (X_i^d,f(X_i^d))_{i\le n}
 \xrightarrow{\mathrm{emp}}(X,f(X))
\]
and, for every $1\le j\le k$,
\[
 \frac1n\sum_{i=1}^n\partial_j f(X_i^d)
 \xrightarrow{p}\E[\partial_j f(X)]\,.
\]
In particular, all empirical polynomial moments of $(X_i^d,f(X_i^d))_{i \le n}$ are $O_{\Prob}(1)$.
\end{lemma}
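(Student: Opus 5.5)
The plan is to verify Definition~\ref{def:emp-conv} directly for the enlarged array. Note a dimension mismatch in the statement: $X_i^d\in\R^q$ while $f:\R^k\to\R$. I read this as $q=k$, so $f$ acts on $X_i^d$ itself.

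\textbf{Empirical convergence of $(X,f(X))$.} Fix $m$ and $\psi\in\PL_m(\R^{k+1})$, and set $\phi(x):=\psi(x,f(x))$. The aim is to show $\phi\in\PL_{m'}(\R^k)$ for some finite $m'$; then $n^{-1}\sum_i\phi(X_i^d)\xto{p}\E\phi(X)=\E\psi(X,f(X))$ follows from the hypothesis. Suppose $f\in\PL_{k_f}$. Then $f$ has polynomial growth, $|f(x)|\le C(1+\|x\|_2^{k_f})$, obtained by comparing with $y=0$. Using the pseudo-Lipschitz bound for $\psi$,
\[
|\phi(x)-\phi(y)|\le L\bigl(1+\|(x,f(x))\|_2^{m-1}+\|(y,f(y))\|_2^{m-1}\bigr)\bigl(\|x-y\|_2+|f(x)-f(y)|\bigr).
\]
Inserting the growth bound and the pseudo-Lipschitz bound for $f$ gives $|\phi(x)-\phi(y)|\le C(1+\|x\|_2^{m'-1}+\|y\|_2^{m'-1})\|x-y\|_2$ with $m'=(m-1)k_f+k_f+1$, say. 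This uses only elementary inequalities such as $a^pb^q\le a^{p+q}+b^{p+q}$ to symmetrize the mixed products. The derivatives of $f$ are not needed for this part.

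\textbf{Averages of derivatives.} Each $\partial_jf$ is pseudo-Lipschitz of finite order, so it is itself an admissible test function in Definition~\ref{def:emp-conv}. Hence $n^{-1}\sum_i\partial_jf(X_i^d)\xto{p}\E[\partial_jf(X)]$ follows immediately. The expectation is finite because $\partial_jf$ has polynomial growth and $X$ must have all moments. The latter holds because $\|x\|_2^p$ is pseudo-Lipschitz, so the limits $\E\|X\|_2^p$ appear as limits of the empirical moments. Strictly, one should check finiteness; this follows since the definition requires $\E\psi(X)$ to be a well-defined limit for every $\psi\in\PL_p$.

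\textbf{Moment bound.} Every polynomial moment, for instance $\|(x,f(x))\|_2^p$, is pseudo-Lipschitz of some finite order by the first step. Its empirical average therefore converges in probability to a finite limit, and a sequence converging in probability to a finite constant is $O_{\Prob}(1)$.

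\textbf{Main obstacle.} The only step requiring care is the bookkeeping of exponents in the first step. That is where I would spend the effort, bounding all cross terms by $1+\|x\|^{m'-1}+\|y\|^{m'-1}$. Everything else is a direct application of the definition.
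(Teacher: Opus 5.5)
Your proposal is correct and follows the paper's proof. Like the paper, you show that $x\mapsto\psi(x,f(x))$ is pseudo-Lipschitz of finite order, use each $\partial_jf$ directly as a test function, and get the moment bound by testing against powers. Your extra remarks (reading $q=k$, the finiteness of $\E\psi(X)$, and the explicit exponent, where $m'=k_fm$ would already suffice) are harmless refinements that the paper leaves implicit.
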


\begin{proof}
A pseudo-Lipschitz function of order $r$ has growth at most
$C(1+\|x\|^r)$.  Hence, if $\Psi$ is pseudo-Lipschitz of order
$s$, the composition
\[
 x\longmapsto\Psi(x,f(x))
\]
is pseudo-Lipschitz of some finite order depending only on $r$ and
$s$.  The first assertion therefore follows directly from empirical
convergence.  The second follows by using $\partial_jf$ as a
test function.  Applying the first assertion to even powers of each
coordinate gives the moment statement.
\end{proof}

\begin{lemma}
\label{lem:conditional-empirical}
Let $\mathscr H_d$ be a sigma-field, and let
$X_1^d,\ldots,X_n^d\in\R^q$ be $\mathscr H_d$-measurable.  Suppose
\[
  (X_i^d)_{i\le n}\xto{\mathrm{emp}}X\,,
\]
where $X$ has moments of all orders.  Let $b_d\in\R^q$ and
$\sigma_d\ge0$ be $\mathscr H_d$-measurable and assume
\[
  b_d\xto{p}b,\qquad \sigma_d\xto{p}\sigma
\]
for deterministic $b\in\R^q$ and $\sigma\ge0$.  Let
$Z_1,\ldots,Z_n$ be independent standard Gaussians, independent of
$\mathscr H_d$, and let $r^d\in\R^n$ satisfy
\begin{equation}\label{eq:conditional-empirical-error}
  \big\|r^d\big\|_n\xto{p}0,
  \qquad
  \frac1n\sum_{i=1}^n\abs{r_i^d}^{2p}=O_{\Prob}(1)
  \quad\text{for every fixed }p\ge1\,.
\end{equation}
Then, with
\[
  Y_i^d=b_d^\top X_i^d+\sigma_dZ_i+r_i^d\,,
\]
one has
\[
  (X_i^d,Y_i^d)_{i\le n}
  \xto{\mathrm{emp}}
  (X,b^\top X+\sigma Z),
\]
where $Z\sim N(0,1)$ is independent of $X$.
\end{lemma}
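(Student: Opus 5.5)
The plan is to reduce to the case $r^d=0$ by a pseudo-Lipschitz perturbation estimate, and then prove the reduced statement by conditioning on $\mathscr H_d$ and using a law of large numbers in the independent Gaussians $Z_i$.

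\textbf{Reduction to $r^d=0$.} Set $\widetilde Y_i^d:=b_d^\top X_i^d+\sigma_dZ_i$ and fix $\psi\in\PL_m(\R^{q+1})$. By the pseudo-Lipschitz bound and Cauchy--Schwarz,
\[
\Big|\frac1n\sum_i\psi(X_i^d,Y_i^d)-\frac1n\sum_i\psi(X_i^d,\widetilde Y_i^d)\Big|
\le L\Big(\frac1n\sum_i\big(1+\|(X_i^d,Y_i^d)\|^{m-1}+\|(X_i^d,\widetilde Y_i^d)\|^{m-1}\big)^2\Big)^{1/2}\|r^d\|_n .
\]
The first factor is $O_\Prob(1)$. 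This follows from three facts: the empirical moments of $X^d$ are bounded, because empirical convergence applied to polynomial test functions gives convergence of moments; the empirical moments of $Z$ are bounded by the law of large numbers; and the empirical moments of $r^d$ are bounded by \eqref{eq:conditional-empirical-error}. Since $b_d,\sigma_d$ are tight, so are the moments of $Y^d$ and $\widetilde Y^d$. As $\|r^d\|_n\to0$ in probability, the difference tends to $0$ in probability, and it suffices to treat $\widetilde Y^d$.

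\textbf{Replacing the coefficients by their limits.} Let $\bar Y_i^d:=b^\top X_i^d+\sigma Z_i$. Then $|\widetilde Y_i^d-\bar Y_i^d|\le\|b_d-b\|\,\|X_i^d\|+|\sigma_d-\sigma|\,|Z_i|$. Hence $\|\widetilde Y^d-\bar Y^d\|_n\to0$ in probability, and the same pseudo-Lipschitz estimate reduces the problem to showing
\[
\frac1n\sum_i\psi(X_i^d,b^\top X_i^d+\sigma Z_i)\xrightarrow{p}\E\,\psi(X,b^\top X+\sigma Z).
\]

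\textbf{The reduced statement.} Condition on $\mathscr H_d$ and define $\phi(x):=\E_Z\psi(x,b^\top x+\sigma Z)$. This function is pseudo-Lipschitz of order $m$: take the expectation of the $\PL$ bound and use the finite moments of $Z$. Empirical convergence of $X^d$ then gives $\frac1n\sum_i\phi(X_i^d)\to\E\phi(X)$, which equals the target by independence of $Z$ and $X$.

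It remains to control the fluctuation $\frac1n\sum_i[\psi(X_i^d,\bar Y_i^d)-\phi(X_i^d)]$. Conditionally on $\mathscr H_d$, this is an average of independent centered terms. Its conditional variance is at most $\frac1{n^2}\sum_i\E_Z\psi(X_i^d,\bar Y_i^d)^2\le\frac Cn\cdot\frac1n\sum_i\big(1+\|X_i^d\|^{2m}\big)$. The last empirical moment is $O_\Prob(1)$, so the conditional variance is $O_\Prob(1/n)$. Conditional Chebyshev, applied on the event where this moment is at most $K$ and followed by $K\to\infty$, shows the fluctuation tends to $0$ in probability.

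The only delicate step is this last tightness-plus-conditional-Chebyshev argument, which turns conditional convergence into unconditional convergence in probability; everything else is routine bookkeeping with the pseudo-Lipschitz bound.
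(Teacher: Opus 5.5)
Your proposal is correct and follows essentially the same route as the paper: you remove $r^d$ by the pseudo-Lipschitz plus Cauchy--Schwarz estimate, then use conditional independence of the $Z_i$ given $\mathscr H_d$ with an $O_{\Prob}(1/n)$ conditional variance bound, and finally apply empirical convergence of $X^d$ to the Gaussian-averaged test function. The only difference is the order of steps: you replace $(b_d,\sigma_d)$ by $(b,\sigma)$ at the sample level before the conditional law of large numbers, whereas the paper first averages over $Z$ with the random coefficients and then replaces $\overline\Psi_d$ by $\overline\Psi$; this makes no substantive difference.
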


\begin{proof}
Fix $k\ge1$ and $\Psi\in\PL_k(\R^{q+1})$ and let
$Y_{i,0}^d=b_d^\top X_i^d+\sigma_dZ_i$.  The pseudo-Lipschitz inequality
and Cauchy--Schwarz give
\[
\begin{aligned}
 &\frac1n\sum_{i=1}^n
 \abs{\Psi(X_i^d,Y_i^d)-\Psi(X_i^d,Y_{i,0}^d)}\\
 &\quad\le C_\Psi
 \left\{\frac1n\sum_{i=1}^n
 \left(1+\norm{X_i^d}^{k-1}+\abs{Y_{i,0}^d}^{k-1}
       +\abs{r_i^d}^{k-1}\right)^2\right\}^{1/2}
 \big\|r^d\big\|_n=o_{\Prob}(1)\,.
\end{aligned}
\]
Conditionally on $\mathscr H_d$, the variables
$\Psi(X_i^d,Y_{i,0}^d)$ are independent.  Their conditional empirical
variance is bounded by
\[
  \frac{C}{n}\left(1+\frac1n\sum_i\norm{X_i^d}^{2k}
      +\norm{b_d}_2^{2k}+\sigma_d^{2k}\right)=o_{\Prob}(1)\,.
\]
Thus the empirical average may be replaced by
$\frac1n\sum_i\overline\Psi_d(X_i^d)$, with 
$\overline\Psi_d(x) :=\E_Z\Psi(x,b_d^\top x+\sigma_dZ)$, 
which can in turn be replaced by 
$\frac1n\sum_i\overline\Psi(X_i^d)$, with $\overline\Psi(x) :=\E_Z\Psi(x,b^\top x+\sigma Z)$ 
via the same pseudo-Lipschitz control, leveraging $b_d \to b$ and $\sigma_d \to \sigma$. Empirical convergence of $X_i^d$ therefore yields the claim.
\end{proof}

Now we dive into the induction argument. 
The base step is straightforward:
By Assumption~\ref{ass:init} and Lemma~\ref{lem:emp-calc},
\[
 (u_i^0,m_i^0)_{i\le n}
 \xrightarrow{\mathrm{emp}}(U^0,G^0)\,,
 \qquad G^0=g_0(U^0)\,.
\]
This proves $\mathcal H_0$. 
%
%

\subsection{The matrix update}
\label{sec:matrix-update}
We now assume $\mathcal{H}_{t}$, $t\ge 1$. 
We prove the matrix step by writing the conditional law of $\cA(m^t)$ using Lemma~\ref{lem:adaptive-conditioning}. 
Define the operators
\[
  Q_{t-1}c:=\sum_{k=0}^{t-1}c_km^k,
  \qquad
  R_{t-1}b:=\sum_{r=0}^{t-1}b_rM^r,
\]
and their Gram matrices
\[
  G_{m,t-1}:=Q_{t-1}^*Q_{t-1}\,,
  \qquad
  G_{M,t-1}:=R_{t-1}^*R_{t-1}\,,
\]
and let
\begin{equation}
      Y_{t-1}:=\cA Q_{t-1},
  \qquad
  X_{t-1}:=\cA^* R_{t-1}.
\end{equation}

By $\mathcal{H}_{t}$ and Lemma~\ref{lem:limiting-Gram-positivity},
the empirical Gram matrices
converge to positive-definite deterministic limits. By Proposition~\ref{prop:exact-rank} their
 inverses are defined almost surely, and the continuous mapping
theorem gives
\[
  \|G_{m,t-1}^{-1}\|_{\op}
  +\|G_{M,t-1}^{-1}\|_{\op}
  \le O_{\Prob}(1)\,.
\]  
Let
\begin{equation}\label{eq:beta-h-def}
  \beta_t:=G_{m,t-1}^{-1}Q_{t-1}^*m^t\,,
  \qquad
  h^t:=m^t-Q_{t-1}\beta_t\,.
\end{equation}
Then $h^t\perp\operatorname{Ran}(Q_{t-1})$.  For $0\le k\le t$, define a vector $d_k^{(t)}\in\R^{t}$ by
\begin{equation}\label{eq:derivative-vectors}
  (d_k^{(t)})_r
  :=d_{k,r+1}\ind_{\{r<k\}} = \ind_{\{r<k\}}
  \left\langle\partial_{r+1}g_k(u^0,\ldots,u^k)\right\rangle_n,
  \qquad 0\le r<t\,.
\end{equation}
  In this notation,
\begin{equation}\label{eq:onsv-matrix-notation}
  \ons_V^k=\alpha_dR_{t-1}d_k^{(t)},
  \qquad k\le t\,,
\end{equation}
as can be verified from the definition of $\ons_V^k$ in~\eqref{eq:onsdef}. 
Also define
\begin{equation}\label{eq:delta-t-def}
  \delta_t:=d_t^{(t)}-\sum_{k<t}(\beta_t)_k d_k^{(t)}\,.
\end{equation}

\begin{lemma}\label{lem:matrix-side-regression}
Under the induction hypothesis $\mathcal{H}_{t}$,
\begin{equation}\label{eq:matrix-side-regression}
  \norm{X_{t-1}^*h^t-\alpha_dG_{M,t-1}\delta_t}_{\R^t}
  \xto{p} 0 \,.
\end{equation}
Consequently,
\begin{equation}\label{eq:matrix-side-regression-after-inverse}
  R_{t-1}G_{M,t-1}^{-1}X_{t-1}^*h^t
  =\alpha_dR_{t-1}\delta_t+o_{\op}(1)\,.
\end{equation}
\end{lemma}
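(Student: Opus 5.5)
The plan is to compute each coordinate of $X_{t-1}^*h^t$ exactly, using the AMP recursion and the orthogonality $h^t\perp\operatorname{Ran}(Q_{t-1})$. We then pass to the limit through $\mathcal H_t$ and identify the limit with Gaussian integration by parts (Stein's lemma).

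\textbf{Exact identity for each coordinate.} Since $X_{t-1}=\cA^*R_{t-1}$, the $r$-th coordinate ($0\le r\le t-1$) is
\[
(X_{t-1}^*h^t)_r=\ip{\cA^*M^r}{h^t}_n .
\]
By \eqref{eq:AMP0}, $\cA^*M^r=u^{r+1}+\sum_{s\le r}a_{r,s}m^s$. Every $m^s$ with $s\le r\le t-1$ lies in $\operatorname{Ran}(Q_{t-1})$, so the Onsager part drops out exactly. This gives
\[
(X_{t-1}^*h^t)_r=\ip{u^{r+1}}{m^t}_n-\sum_{k<t}(\beta_t)_k\ip{u^{r+1}}{m^k}_n .
\]
On the other side, $(G_{M,t-1}\delta_t)_r=(G_{M,t-1}d_t^{(t)})_r-\sum_{k<t}(\beta_t)_k(G_{M,t-1}d_k^{(t)})_r$, and
\[
(G_{M,t-1}d_k^{(t)})_r=\sum_{s=1}^{k}\ip{M^r}{M^{s-1}}_d\, d_{k,s}.
\]
It therefore suffices to show, for each $k\le t$ and $r\le t-1$,
\[
\ip{u^{r+1}}{m^k}_n-\alpha_d\sum_{s=1}^{k}\ip{M^r}{M^{s-1}}_d\,d_{k,s}\xto{p}0,
\qquad\text{and}\qquad \beta_t=O_{\Prob}(1).
\]

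\textbf{Limits of the three ingredients.} By $\mathcal H_t$(i) (note $r+1\le t$) applied with the $\PL_2$ test function $(x,y)\mapsto xy$, we get $\ip{u^{r+1}}{m^k}_n\to\E[U^{r+1}G^k]$. By $\mathcal H_t$(ii), traces of products converge, so $\ip{M^r}{M^{s-1}}_d\to\taup(W^rW^{s-1})$. By Lemma~\ref{lem:emp-calc} together with Assumption~\ref{ass:g}, $d_{k,s}\to\E[\partial_sg_k(U^0,\ldots,U^k)]$. Finally, $\beta_t=G_{m,t-1}^{-1}Q_{t-1}^*m^t$ converges to $\Gamma_{G,t-1}^{-1}(\E[G^kG^t])_k$, using Lemma~\ref{lem:limiting-Gram-positivity} and the $O_{\Prob}(1)$ bound on $\|G_{m,t-1}^{-1}\|_{\op}$ already recorded. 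Hence it remains to verify the limit identity
\[
\E[U^{r+1}G^k]=\alpha\sum_{s=1}^k\taup(W^rW^{s-1})\,\E[\partial_sg_k].
\]

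\textbf{Stein step.} Condition on $U^0$, which is independent of the centered Gaussian vector $(U^1,\ldots,U^{T+1})$. Gaussian integration by parts gives
\[
\E[U^{r+1}g_k(U^0,\ldots,U^k)]=\sum_{s=1}^k\E[U^{r+1}U^s]\,\E[\partial_sg_k].
\]
This is justified because $g_k$ and its partial derivatives are $C^1$ with polynomial growth (Assumption~\ref{ass:g}) and $U^0$ has all moments. There is no $s=0$ term, since $\E[U^{r+1}U^0\mid U^0]$ is not a Gaussian covariance and $U^{r+1}$ is independent of $U^0$ with mean zero. Substituting $\E[U^{r+1}U^s]=\alpha\,\taup(W^rW^{s-1})$ from \eqref{eq:covU}, and using $\alpha_d\to\alpha$, closes \eqref{eq:matrix-side-regression}. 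I expect this step to be the main point requiring care: one must check that the index shift $s\mapsto s-1$ in \eqref{eq:covU} matches exactly the shift built into the definition \eqref{eq:derivative-vectors} of $d_k^{(t)}$.

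\textbf{Consequence.} For \eqref{eq:matrix-side-regression-after-inverse}, write
\[
R_{t-1}G_{M,t-1}^{-1}X_{t-1}^*h^t-\alpha_dR_{t-1}\delta_t=R_{t-1}G_{M,t-1}^{-1}e_t ,
\qquad \|e_t\|\xto{p}0 .
\]
Its operator norm is at most $\sum_r\bigl|(G_{M,t-1}^{-1}e_t)_r\bigr|\,\|M^r\|_{\op}$. Here $\|G_{M,t-1}^{-1}\|_{\op}=O_{\Prob}(1)$, and $\|M^r\|_{\op}\to\|W^r\|<\infty$ by the norm part of strong convergence in $\mathcal H_t$(ii). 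Hence the right-hand side is $o_{\Prob}(1)$, which proves \eqref{eq:matrix-side-regression-after-inverse}.
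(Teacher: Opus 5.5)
Your proposal is correct and follows essentially the same route as the paper. Both arguments reduce exactly to $(X_{t-1}^*h^t)_r=\ip{u^{r+1}}{h^t}_n$ using orthogonality of $h^t$ to the vector Onsager term, pass to the limit under $\mathcal H_t$ with $\beta_t\to\beta_t^\infty$, identify the limit through Gaussian integration by parts and the covariance recursion \eqref{eq:covU}, and obtain the consequence from the $O_{\Prob}(1)$ bound on $G_{M,t-1}^{-1}$ together with bounded $\|M^r\|_{\op}$. Your coordinatewise handling of the index shift $s\mapsto s-1$ between \eqref{eq:covU} and \eqref{eq:derivative-vectors} is correct, and slightly cleaner than the paper's version, which indexes $G_{W,t-1}$ from $1$ instead of $0$.
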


\begin{proof}
From the definition of $X_{t-1}$ we have for $1\le r<t$, 
\begin{equation}
  (X_{t-1}^*h^t)_r = \ip{X_{t-1}^*h^t}{e_r}_{\R^t} = \ip{h^t}{X_{t-1}e_r}_{n} 
  =\ip{h^t}{\cA^*M^r}_n\,.
\end{equation}
Since $\cA^*M^r=u^{r+1}+\ons_u^r$, and by \eqref{eq:onsdef}, $\ons_u^r\in\Span\{m^0,\ldots,m^r\}$, while $h^t$ is orthogonal to $\Span\{m^0,\ldots,m^{t-1}\}$, we have
\begin{equation}\label{eq:Xh-equals-uh}
(X_{t-1}^*h^t)_r =\ip{u^{r+1}}{h^t}_n\,.
\end{equation}
By $\mathcal{H}_{t}$ and Gram matrix invertibility, the projection coefficients $\beta_t$ converge to the coefficients $\beta_t^\infty$ of the $L^2$-projection of $G^t$ onto $\Span\{G^0,\ldots,G^{t-1}\}$. Therefore the right side of \eqref{eq:Xh-equals-uh} converges to
\begin{equation}\label{eq:limit-Xh}
  \E\left[U^{r+1}
  \left(G^t-\sum_{k<t}(\beta_t^\infty)_kG^k\right)\right]\,.
\end{equation}
 The vector $(U^1,\ldots,U^t)$ is centered Gaussian and independent of $U^0$. The additive variable $\eps Z^k$ in $G^k$ is independent of all
$U$-fields, and hence contributes zero to $\E[U^{r+1}G^k]$. Multivariate Gaussian integration by parts and the covariance recursion~\eqref{eq:covU} gives, for $k\le t$,
\begin{align}
  \E[U^{r+1}G^k]
  &=\sum_{s=0}^{k-1}
    \E[U^{r+1}U^{s+1}]
    \E\bigl[\partial_{s+1}g_k(U^0,\ldots,U^k)\bigr] \notag\\
  &=\alpha\sum_{s=0}^{k-1}
    \taup(W^r W^s)
    \E\bigl[\partial_{s+1}g_k(U^0,\ldots,U^k)\bigr]\,.
  \label{eq:limiting-Stein}
\end{align}
Using \eqref{eq:limiting-Stein} in \eqref{eq:limit-Xh}, we find that 
\[
  X_{t-1}^*h^t \,\xto{p} \,\alpha\, G_{W,t-1}
  \left(d_t^\infty-\sum_{k<t}(\beta_t^\infty)_kd_k^\infty\right)\,,
\]
where $G_{W,t-1}=(\taup(W^rW^s))_{1\le r,s<t}$ and the vectors $d_k^\infty$ are the limits of \eqref{eq:derivative-vectors}.  On the other hand, $\mathcal{H}_t$ and Lemma~\ref{lem:emp-calc} imply
\[
  G_{M,t-1}\xto{p}G_{W,t-1},
  \qquad
  \delta_t\xto{p}
  d_t^\infty-\sum_{k<t}(\beta_t^\infty)_kd_k^\infty\,,
  \qquad
  \alpha_d\to\alpha\,.
\]
This proves \eqref{eq:matrix-side-regression}.  Since the inverse Gram matrix is bounded and the columns of $R_{t-1}$ are uniformly bounded in operator norm, \eqref{eq:matrix-side-regression-after-inverse} follows.
\end{proof}

Consider the sigma-field 
\begin{equation}\label{eq:past-minus}
  \mathcal F_t^-:= \sigma\bigl(u^0,R_{t-1},X_{t-1},Q_{t-1},Y_{t-1}\bigr) = \mathscr F_{t}\,,
\end{equation}
as per the notation of~\eqref{eq:sigma_fieldF}.

We now apply Lemma \ref{lem:adaptive-conditioning} to the decomposition \eqref{eq:beta-h-def}.  Conditionally on $\mathcal F_t^-$,
\begin{equation}\label{eq:conditional-Am-t}
  \cA m^t
  \overset{\mathrm d}{=}
  Y_{t-1}\beta_t
  +R_{t-1}G_{M,t-1}^{-1}X_{t-1}^*h^t
  +P_{R_{t-1}}^\perp\widetilde{\cA} h^t.
\end{equation}
We combine the first two terms before the Onsager subtraction.  By the definition of $Y_{t-1}$ and \eqref{eq:onsv-matrix-notation},
\begin{align}
  Y_{t-1}\beta_t
  &=\sum_{k<t}(\beta_t)_k\cA (m^k) \notag\\
  &=\sum_{k<t}(\beta_t)_kV^k
    +\alpha_dR_{t-1}\sum_{k<t}(\beta_t)_k d_k^{(t)}\,.
  \label{eq:first-conditional-mean}
\end{align}
By Lemma \ref{lem:matrix-side-regression}, the second conditional-mean term in \eqref{eq:conditional-Am-t} is
\begin{equation}\label{eq:second-conditional-mean}
  \alpha_dR_{t-1}
  \left(d_t^{(t)}-\sum_{k<t}(\beta_t)_kd_k^{(t)}\right)
  +o_{\op}(1)\,.
\end{equation}
Adding \eqref{eq:first-conditional-mean} and \eqref{eq:second-conditional-mean} yields the sought-after cancellation:
\begin{equation}\label{eq:combined-conditional-mean-matrix}
  Y_{t-1}\beta_t
  +R_{t-1}G_{M,t-1}^{-1}X_{t-1}^*h^t
  =\sum_{k<t}(\beta_t)_kV^k
   +\ons_V^t+o_{\op}(1)\,.
\end{equation}
Subtracting $\ons_V^t$ from \eqref{eq:conditional-Am-t}, and using Lemma \ref{lem:projected-innovation} we obtain that conditionally on $\mathcal F_t^-$,
\begin{equation}\label{eq:matrix-innovation-decomposition}
  V^t
  \stackrel{d}{=}\sum_{k<t}(\beta_t)_kV^k
   +\norm{h^t}_nW_t^d
   +o_{\op}(1)\,,
\end{equation}
where $W_t^d$ is a standard GOE matrix independent of the past.  By empirical convergence,
\begin{equation}\label{eq:h-var-limit-detailed}
  \norm{h^t}_n^2 ~\xto{p} ~
  q_{V,t} := 
  \dist_{L^2(\Omega,\Prob)}\left(G^t,\Span\{G^0,\ldots,G^{t-1}\}\right)^2\,,
\end{equation}
the squared distance of $G^t$ from $\Span\{G^s: s< t\}$ in $L^2(\Omega,\Prob)$, and $\beta_t\to\beta_t^\infty$.  Hence Proposition \ref{prop:strong-free} applied to \eqref{eq:matrix-innovation-decomposition} gives the strong joint limit
\[
  S^t=\sum_{k<t}(\beta_t^\infty)_kS^k+\sqrt{q_{V,t}}\,s_t\,,
\]
where $s_t$ is a standard semicircular variable free from the past. The regression coefficients $(\beta_t^\infty)_k$ and the residual variance $q_{V,t}$ are those of a centered semicircular family  $(S^0,\ldots,S^t)$ with covariance $\taup(S^tS^r)=\E[G^tG^r]$: this can be seen from the fact that free cumulants of order 2 are multilinear~\cite{mingo2017free}, or by considering a correlated family of GOE matrices and passing to the large-size limit.      
Finally, since
\[
  M^t=\sum_{s=0}^t a_{t,s} V^{s}
\]
is a fixed linear combination of $(V^0,\ldots,V^t)$,  joint strong convergence holds:
\[
  (V^0,\ldots,V^t,M^0,\ldots,M^t)
  \xto{\mathrm{str}}
  (S^0,\ldots,S^t,W^0,\ldots,W^t)\,.
\]

\subsection{The vector update}\label{sec:vector-update-detailed}

We next prove the vector step under $\mathcal{H}_{t}$.  Recall
\[
  Q_tc:=\sum_{r=0}^tc_rm^r\,,
  \qquad
  R_{t-1}b:=\sum_{s=0}^{t-1}b_sM^s\,,
\]
and
\[
  G_{m,t}:=Q_t^*Q_t\,,
  \qquad
  G_{M,t-1}:=R_{t-1}^*R_{t-1}\,,
  \qquad
  Y_t:=\cA Q_t\,,
  \qquad
  X_{t-1}:=\cA ^*R_{t-1}\,,
\]
and
\[
 \|G_{m,t}^{-1}\|_{\op}
 +\|G_{M,t-1}^{-1}\|_{\op}=O_{\Prob}(1)\,.
\]
The relevant past sigma-field is
\begin{equation}\label{eq:past-plus}
  \mathcal F_t^+:= \sigma\bigl(u^0,R_{t-1},X_{t-1},Q_t,Y_t\bigr) = \mathscr{G}_{t}\,, %
\end{equation}
as per the notation of~\eqref{eq:sigma_fieldG}.

Decompose the new matrix message relative to its predecessors:
\begin{equation}\label{eq:gamma-k-def}
  \gamma_t:=G_{M,t-1}^{-1}R_{t-1}^*M^t\,,
  \qquad
  K^t:=M^t-R_{t-1}\gamma_t\,.
\end{equation}
Then $K^t\perp\operatorname{Ran}(R_{t-1})$.  For $0\le s\le t$, let
\[
   a_s^{(t)}
  :=(a_{s,0},\ldots,a_{s,s},0,\ldots,0)\in\R^{t+1}
\]
be the coefficient vector of $M^s$ in $V^0,\ldots,V^t$, and define
\begin{equation}\label{eq:lambda-innovation}
  \widetilde{a}_t
  := a_t^{(t)}
  -\sum_{0\le s<t}(\gamma_t)_s a_s^{(t)}\,.
\end{equation}
Then we have the exact identity
\begin{equation}\label{eq:K-as-V-combination}
  K^t=\sum_{r=0}^t
  (\widetilde{a}_t)_rV^r\,.
\end{equation}

We apply Lemma \ref{lem:adaptive-conditioning} to the decomposition \eqref{eq:gamma-k-def} gives, conditionally on $\mathcal F_t^+$,
\begin{equation}\label{eq:conditional-Astar-Mt}
  \cA^* M^t
  \overset{\mathrm d}{=}
  X_{t-1}\gamma_t
  +Q_tG_{m,t}^{-1}Y_t^*K^t
  +P_{Q_t}^\perp\widetilde{\cA}^*K^t\,.
\end{equation}
We now identify the second conditional-mean term.  The $r$-th column of $Y_t$ is
\[
  \cA m^r=V^r+\ons_V^r\,.
\]
Since $\ons_V^r\in\Span\{M^0,\ldots,M^{r-1}\}\subseteq\operatorname{Ran}(R_{t-1})$ and $K^t\perp\operatorname{Ran}(R_{t-1})$, we obtain
\begin{equation}\label{eq:YstarK-exact}
  (Y_t^*K^t)_r
  =\ip{V^r}{K^t}_d\,,
\end{equation}
similarly to the argument presented for the matrix side. 
Combining \eqref{eq:YstarK-exact} with \eqref{eq:K-as-V-combination},
\begin{equation}\label{eq:YstarK-GramV}
  Y_t^*K^t
  =G_{V,t}\widetilde{a}_t\,,
  \qquad
  G_{V,t}:=\bigl(\ip{V^r}{V^s}_d\bigr)_{0\le r,s\le t}\,.
\end{equation}
By the just-proved matrix step and assertion $(i)$ of $\mathcal{H}_t$ on empirical convergence,
\[
  G_{V,t}\xto{p}
  \bigl(\E[G^rG^s]\bigr)_{r,s\le t}\,,
  \qquad
  G_{m,t}\xto{p}
  \bigl(\E[G^rG^s]\bigr)_{r,s\le t}\,.
\]
It follows that
\begin{equation}\label{eq:vector-coefficient-identity}
  G_{m,t}^{-1}Y_t^*K^t
  =\widetilde{a}_t + o_{\Prob}(1)\,.
\end{equation}

For the first conditional-mean term in \eqref{eq:conditional-Astar-Mt}, recall that $\ons_u^t =\sum_{r=0}^{t} a_{t,r} m^r$, so
\begin{align}
  \cA^* M^s &= u^{s+1} + \ons_u^s \notag\\
  &=u^{s+1} + Q_t a_s^{(t)}\,,
  \qquad 0\le s<t\,.\label{eq:X-columns-vector}
\end{align}
Together, \eqref{eq:vector-coefficient-identity} and \eqref{eq:X-columns-vector} yield the vector-side Onsager cancellation
\begin{align}
  X_{t-1}\gamma_t
  +Q_tG_{m,t}^{-1}Y_t^*K^t
  &=\sum_{0\le s<t}(\gamma_t)_s u^{s+1}
    +Q_t\sum_{0\le s<t}(\gamma_t)_s a_s^{(t)}
    +Q_t\widetilde{a}_t
    +Q_t o_{\Prob}(1) \notag\\
  &=\sum_{0\le s<t}(\gamma_t)_su^{s+1}
    +Q_t a_t^{(t)}+ Q_t o_{\Prob}(1) \notag\\
  &=\sum_{0\le s<t}(\gamma_t)_su^{s+1}
    +\ons_u^t+Q_t o_{\Prob}(1)\,.
  \label{eq:combined-conditional-mean-vector}
\end{align}

Subtracting $\ons_u^t$ in \eqref{eq:conditional-Astar-Mt}, and invoking Lemma \ref{lem:projected-innovation} we obtain that conditionally on $\mathcal F_t^+$,
\begin{equation}\label{eq:vector-innovation-decomposition}
  u^{t+1}
  \stackrel{d}{=}\sum_{0\le s<t}(\gamma_t)_su^{s+1}
   +\sqrt{\alpha_d \taud((K^t)^2)}\, z^{t+1}
   +e^{t+1}\,,
\end{equation}
 where the coordinates of $z^{t+1}$ are independent standard Gaussian variables, independent of $\mathcal F_t^+$.  
 The error $e^{t+1}$ is
the sum of two terms: $Q_t o_{\Prob}(1)$, coming from
\eqref{eq:combined-conditional-mean-vector}, and the negative of the
finite-rank projection
$P_{Q_t}\widetilde\cA^*K^t$.  The first term is negligible in
$\norm{\cdot}_n$ since the number of columns is fixed and the columns
of $Q_t$ have bounded normalized norms.  The second is negligible by
Lemma~\ref{lem:projected-innovation}. For the higher moments, write
\[
 P_{Q_t}\widetilde\cA^*K^t
 =
 Q_tG_{m,t}^{-1}Q_t^*\widetilde\cA^*K^t\,.
\]
Conditionally on the past, the vector
$Q_t^*\widetilde\cA^*K^t \in \R^{t+1}$ is $O_{\Prob}(n^{-1/2})$.
Since $G_{m,t}^{-1}=O_{\Prob}(1)$ and the finitely many columns of
$Q_t$ have bounded empirical moments of every order, the projection
has bounded empirical moments as well.  The same is immediate for
$Q_t o_{\Prob}(1)$.  
Hence for every $p\ge1$,
\begin{equation}\label{eq:error_vec}
 \norm{e^{t+1}}_n\xto{p}0,
 \qquad
 \frac1n\sum_{i=1}^n\abs{e_i^{t+1}}^{2p}=O_{\Prob}(1)\,.
\end{equation}

The coefficients $\gamma_t$ converge to the coefficients $\gamma_t^\infty$ of the $L^2(\mathcal{N}, \tau)$-projection of $W^t$ onto
$\Span\{W^1,\ldots,W^{t-1}\}$, and
\begin{equation}\label{eq:K-var-limit}
  \taud((K^t)^2) ~\xto{p}~
  q_{u,t}:=
  \dist_{L^2(\mathcal{N},\tau)}
  \left(W^t,\Span\{W^1,\ldots,W^{t-1}\}\right)^2\,.
\end{equation}
Therefore \eqref{eq:vector-innovation-decomposition} mirrors the ordinary Gaussian regression construction
\[
  U^{t+1}
  =\sum_{0\le s<t}(\gamma_t^\infty)_sU^{s+1}
   +\sqrt{\alpha q_{u,t}}\,Z_{t+1}\,,
\]
where $Z_{t+1}\sim N(0,1)$ is independent of $(U^0,\ldots,U^t)$.  In particular,
\[
  \E[U^{t+1}U^{r+1}]
  =\alpha\taup(W^t W^r)\,,
  \qquad 0\le r\le t\,,
\]
and $U^{t+1}$ is independent of $U^0$ jointly with the other Gaussian random variables. 
Apply Lemma~\ref{lem:conditional-empirical} to
\eqref{eq:vector-innovation-decomposition}, with the past coordinate vector
$(u_i^0,\ldots,u_i^t,m_i^0,\ldots,m_i^t)$.
The regression coefficients and variance converge by the preceding
calculations, and the error satisfies \eqref{eq:error_vec} together with the
required empirical moment bounds.  We obtain
\[
  (u_i^0,\ldots,u_i^{t+1},m_i^0,\ldots,m_i^t)_{i\le n}
  \xto{\mathrm{emp}}
  (U^0,\ldots,U^{t+1},G^0,\ldots,G^t)\,.
\]  
Now append
\[
 m_i^{t+1}=g_{t+1}(u_i^0,\ldots,u_i^{t+1})\,,
 \qquad
 G^{t+1}=g_{t+1}(U^0,\ldots,U^{t+1})\,,
\]
on each side of the empirical convergence above using Lemma~\ref{lem:emp-calc}. This closes the induction $\mathcal H_{t+1}$.
This completes the proof of Theorem~\ref{thm:direct_SE}.


\section{The terminal query}
\label{sec:adapted-terminal-query}
Here we finish the proof of Theorem~\ref{thm:main-general}. It remains to append the terminal message $\widehat m$ to the joint empirical convergence~\eqref{eq:vector-SE-main1}, and prove a stability of the difference $\cA (\widehat m - m^T)$.  

\begin{lemma}
\label{lem:ae-empirical-transform}
Suppose $(X_i^d)_{i\le n}\xrightarrow{\mathrm{emp}}X$, where $X$
has moments of all orders. Let $f:\R^k\to\R$ be bounded and Borel,
and assume
\[
 \Prob\bigl(X\in\operatorname{Disc}(f)\bigr)=0\,.
\]
Then
\[
 (X_i^d,f(X_i^d))_{i\le n}
 \xrightarrow{\mathrm{emp}}(X,f(X))\,.
\]
In particular, if $f_a$ is a bounded continuous function, then 
\[
 \frac1n\sum_{i=1}^n
 |f_a(X_i^d)-f(X_i^d)|^2
 \xrightarrow{p}
 \E|f_a(X)-f(X)|^2\,.
\]
\end{lemma}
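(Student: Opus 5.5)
The proof is a truncation-and-approximation argument that reduces the claim to the pseudo-Lipschitz definition of empirical convergence. Fix $\Psi\in\PL_m(\R^{k+1})$. We must show $\frac1n\sum_i\Psi(X_i^d,f(X_i^d))\xto{p}\E\Psi(X,f(X))$. Let $K:=\sup|f|$. The plan is to replace $f$ by a bounded continuous (in fact Lipschitz) function $f_a$ with $|f_a|\le K$ and $\E|f_a(X)-f(X)|$ small. We then control the error uniformly in $d$ via the discontinuity hypothesis.

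\textbf{Step 1 (continuous case).} If $f_a$ is bounded and Lipschitz, then $x\mapsto\Psi(x,f_a(x))$ is pseudo-Lipschitz of order $m$, so the claim for $f_a$ follows directly from Definition~\ref{def:emp-conv}. If $f_a$ is merely bounded and continuous, we instead use the following fact. Empirical convergence implies that the empirical measures $\widehat\mu_d$ of $(X_i^d)$ converge weakly in probability to $\Law(X)$, with uniformly integrable polynomial moments in probability. This holds because bounded Lipschitz test functions are in $\PL_1$, and $\|x\|^p$ is pseudo-Lipschitz.

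\textbf{Step 2 (discontinuous $f$).} We argue along subsequences. Every subsequence has a further subsequence along which, almost surely, $\widehat\mu_d\Rightarrow\Law(X)$ and $\int\|x\|^p\,\rmd\widehat\mu_d\to\E\|X\|^p$ for all rational $p$. This uses a diagonal argument over a countable convergence-determining family of bounded Lipschitz functions. On that event, the map $x\mapsto\Psi(x,f(x))$ is continuous at every point outside $\operatorname{Disc}(f)$, which is a $\Law(X)$-null set. Moreover it is bounded by $C(1+\|x\|^m+K^m)$. The extended continuous mapping theorem (portmanteau for functions continuous $\Law(X)$-a.e.), combined with uniform integrability from the moment convergence, then gives $\int\Psi(x,f(x))\,\rmd\widehat\mu_d\to\E\Psi(X,f(X))$. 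Since every subsequence has a sub-subsequence along which convergence holds a.s., convergence in probability follows. This yields the first assertion. It also makes the approximating $f_a$ of the opening paragraph unnecessary, though the same argument works with it.

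\textbf{Step 3 (the ``in particular'').} The function $x\mapsto|f_a(x)-f(x)|^2$ is bounded and has discontinuities contained in $\operatorname{Disc}(f)$. The same a.e.-continuous portmanteau argument applies to it, with the growth issue absent entirely.

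\textbf{Main obstacle.} The delicate point is Step 2: passing from convergence for pseudo-Lipschitz tests to convergence for tests that are discontinuous on a null set and have polynomial growth. This requires extracting a.s.\ weak convergence along subsequences together with uniform integrability. The subsequence reduction is the standard tool for this, and choosing a countable determining class handles measurability.
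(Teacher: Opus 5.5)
Your argument is correct and follows essentially the same route as the paper. Both rest on the continuous mapping theorem for $x\mapsto(x,f(x))$, which is continuous off the $\Law(X)$-null set $\operatorname{Disc}(f)$, together with higher-moment control for the polynomial growth of $\Psi$. The only difference is packaging: you make the in-probability statement rigorous by the subsequence/almost-sure reduction and then invoke uniform integrability. The paper instead applies the continuous mapping theorem directly to the push-forward of the empirical measure, and removes compactly supported cutoffs $\chi_R(x)\vartheta(y)$ using the explicit tail bound $R^{-r}\int\|x\|^{m+r}\,\rmd\widehat\mu$.
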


\begin{proof}
Let $\widehat \mu$ denote the empirical measure of $(X_i^d)_{i\le n}$ and $\mu = \operatorname{Law}(X)$. Empirical convergence implies weak convergence in probability of $\widehat{\mu}$ to $\mu$ and convergence of every polynomial moment.
Letting $F(x) = (x,f(x)) \in \R^{k+1}$, we have $\operatorname{Disc}(F) = \operatorname{Disc}(f)$, therefore $\mu(\operatorname{Disc}(F))=0$. By the continuous mapping theorem applied to $F$ we have  
 $\widehat \nu := F_{\#}\widehat\mu\to \nu := F_\# \nu$.  
Now let $m\ge1$ and $\Psi\in\mathrm{PL}_m(\mathbb R^{k+1})$ and consider its cutoff 
\[\Psi_R(x,y) = \chi_R(x)\vartheta(y)\Psi(x,y)\] 
where $\chi_R \in C(\R^k,\R)$ with 
\[\chi_R(x)=1\,,~~~ \|x\|\le R\,,~~~  \chi_R(x)=0\,,~~~ \|x\|\ge 2R\,,\]
and similarly let $\vartheta \in C(\R,\R)$ be compactly supported with $\vartheta(y)=1$ for $|y|\le B := \|f\|_{\infty}$.
On the resulting compact set, $\Psi_R$ is bounded and
we have $\int \Psi_R \mathrm{d}\widehat \nu \to \int \Psi_R \mathrm{d} \nu $.
Now we remove the cutoff. 
$\Psi$ has at most polynomial growth: $|\Psi(w)|\le C_{\Psi}(1+\|w\|^m)$, $w \in \R^{k+1}$, therefore $|\Psi(x,f(x))|\le C_{\Psi,B}(1+\|x\|^m)$, $x\in\R^k$. Choose $r>0$. We have for $R \ge 1$, 
\[
\left|\int (\Psi_R - \Psi)\rmd \widehat \nu\right| \le C_{\Psi,B}\int_{\|x\|>R} (1+\|x\|^m) \rmd \widehat\mu
\le \frac{C_{\Psi,B}}{R^{r}}\int \|x\|^{m+r}\rmd \widehat\mu\,.
\]
Since $X$ has moments of all orders, the above empirical moment is $O_{\Prob}(1)$.  
Similarly,
\[\left|\int (\Psi_R - \Psi)\rmd \nu\right| \le \frac{C_{\Psi,B}}{R^{r}}\E[\|X\|^{m+r}]\,.\] 
The triangle inequality then sending $R \to \infty$ after $n\to \infty$ finishes the proof.
\end{proof}

Recall the sigma-field $\mathscr F_T$ produced by the alternating observation scheme \eqref{eq:sigma_fieldG}--\eqref{eq:sigma_fieldF}.
\begin{lemma}
\label{lem:terminal-query-continuity}
For $T$, let $z\in H_n$ be $\mathscr F_T$-measurable. Then there exists a family of random variables $C_{d,T}$ such that
\begin{align*}\label{eq:terminal-query-continuity}
\|\cA z\|_{\op}
 \le C_{d,T}\|z\|_n\,,\qquad 
 \mbox{and} \qquad \lim_{L \to \infty}\limsup_{d\to\infty}
 \Prob\bigl(C_{d,T}>  L\bigr) = 0\,.
\end{align*}
\end{lemma}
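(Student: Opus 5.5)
We decompose $\cA z$ with the conditioning identity of Lemma~\ref{lem:adaptive-conditioning}, taken relative to the information available before $z$ is queried. Since $z$ is $\mathscr F_T$-measurable, we take $Q=Q_{T-1}=[m^0|\cdots|m^{T-1}]$ and $R=R_{T-1}=[M^0|\cdots|M^{T-1}]$; these generate exactly the observations defining $\mathscr F_T$. Write
\[
z=Q_{T-1}\beta+h\,,\qquad h\perp\operatorname{Ran}(Q_{T-1})\,,\qquad \beta=G_{m,T-1}^{-1}Q_{T-1}^*z\,.
\]
Then, conditionally on $\mathscr F_T$,
\[
\cA z\stackrel{\mathrm d}{=}Y_{T-1}\beta+R_{T-1}G_{M,T-1}^{-1}X_{T-1}^*h+P_{R_{T-1}}^\perp\widetilde\cA h\,.
\]
Since $\cA z$ is a deterministic function of $z$ and $\cA$, and the bound we want is on its conditional law given $\mathscr F_T$, working with this equality in law is enough. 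We bound the three terms separately and take $C_{d,T}$ to be the sum of the resulting random constants.

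\textbf{First term.} We have $\|\beta\|_2\le\|G_{m,T-1}^{-1}\|_{\op}\,\big(\sum_k\|m^k\|_n^2\big)^{1/2}\|z\|_n$. The columns of $Y_{T-1}$ are $\cA m^k=V^k+\ons_V^k$, which are finite combinations of $V^j$ and $M^j$ with coefficients $d_{k,r}$. By Theorem~\ref{thm:direct_SE} (strong convergence) together with Lemma~\ref{lem:emp-calc}, their operator norms are $O_{\Prob}(1)$. Gram invertibility (Lemma~\ref{lem:limiting-Gram-positivity} and Proposition~\ref{prop:exact-rank}) gives $\|G_{m,T-1}^{-1}\|_{\op}=O_\Prob(1)$. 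Hence $\|Y_{T-1}\beta\|_{\op}\le O_\Prob(1)\|z\|_n$.

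\textbf{Second term.} The entries of $X_{T-1}^*h$ are $\ip{\cA^*M^r}{h}_n=\ip{u^{r+1}}{h}_n$, because the Onsager part of $\cA^*M^r$ lies in $\operatorname{Ran}(Q)$, or is handled by Cauchy--Schwarz if it involves $m^T$. This gives $\|X_{T-1}^*h\|_2\le(\sum_r\|u^{r+1}\|_n^2)^{1/2}\|h\|_n=O_\Prob(1)\|z\|_n$, using $\|h\|_n\le\|z\|_n$ and the empirical moment bounds. The columns $M^r$ have $O_\Prob(1)$ operator norm and $G_{M,T-1}^{-1}=O_\Prob(1)$, so this term is also $O_\Prob(1)\|z\|_n$.

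\textbf{Third term: the main obstacle.} Here $h$ is only $\mathscr F_T$-measurable, not independent of $\cA$, so we need the conditional freshness of $\widetilde\cA$ from Lemma~\ref{lem:adaptive-conditioning}. By Lemma~\ref{lem:projected-innovation}, conditionally on $\mathscr F_T$ the matrix $\widetilde\cA h$ has law $\|h\|_nW_d$ with $W_d$ a standard GOE. Write $P^\perp_{R}\widetilde\cA h=\widetilde\cA h-P_R\widetilde\cA h$, which gives
\[
\|P_R^\perp\widetilde\cA h\|_{\op}\le\|h\|_n\big(\|W_d\|_{\op}+O_\Prob(d^{-1/2})\big)\,.
\]
We cannot use the Frobenius-type bound for the projection $P_R^\perp$, since it loses a factor $\sqrt d$ in operator norm; this is why the decomposition $\widetilde\cA h-P_R\widetilde\cA h$ is needed. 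The random factor $\|W_d\|_{\op}$ is independent of $\mathscr F_T$ and tends to $2$ in probability. The $O_\Prob$ factor is uniform in the conditioning, because its law, after dividing by $\|h\|_n$, is a fixed-rank Gaussian computation that does not depend on the conditioning variables beyond the rank bound.

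Collecting the three bounds, we set $C_{d,T}$ equal to the sum of the three random factors. Each factor is a continuous function of quantities that are tight as $d\to\infty$, namely Gram inverses, operator norms of the $V^k$ and $M^k$, empirical moments, and $\|W_d\|_{\op}$. A finite sum of tight families is tight, which gives $\lim_L\limsup_d\Prob(C_{d,T}>L)=0$. The inequality $\|\cA z\|_{\op}\le C_{d,T}\|z\|_n$ holds in law jointly with $z$, and we realize it on a coupled probability space, which is all that is needed downstream.
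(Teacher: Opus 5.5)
Your proof is correct and follows essentially the same route as the paper. Both condition on $\mathscr F_T$ relative to $Q_{T-1},R_{T-1}$. Both bound the two revealed terms by Cauchy--Schwarz, using $O_\Prob(1)$ Gram inverses and $O_\Prob(1)$ norms of $\cA m^s$ and $\cA^*M^s$. Both bound the fresh term by a GOE operator norm.

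Your handling of the fresh term, writing it as $\widetilde\cA h-P_R\widetilde\cA h$ and applying Lemma~\ref{lem:projected-innovation}, is more careful than the paper's write-up. The paper passes from $\widetilde\cA r$ to its projected version directly to get $\|W_d\|_{\op}\|r\|_n$. It does not address that the Hilbert--Schmidt projection $P_R^\perp$ is not a contraction in operator norm.
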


\begin{proof}
Let
\[
 Qe_s=m^s\,,\qquad Re_s=M^s\,,\qquad 0\le s<T\,,
\]
and write
\[
 G_Q=Q^*Q\,,\qquad G_R=R^*R\,,\qquad
 Y=\cA Q\,,\qquad X=\cA^*R\,.
\]
For $z$ $\mathscr F_T$-measurable, let
\[
 \beta=G_Q^{-1}Q^*z\,,
 \qquad
 r=P_Q^\perp z\,.
\]
The inverses exist almost surely by Proposition~\ref{prop:exact-rank}.
Lemma~\ref{lem:adaptive-conditioning} yields
\begin{equation}\label{eq:terminal-continuity-conditioning}
 \cA z 
 \stackrel{\mathrm d}{=}
 Y\beta+RG_R^{-1}X^*r
 +P_R^\perp\widetilde\cA r\,.
\end{equation}
State evolution and Lemma~\ref{lem:limiting-Gram-positivity} imply
\[
 \|G_Q^{-1}\|_{\op}+\|G_R^{-1}\|_{\op}=O_{\Prob}(1)\,.
\]
They also imply
\[
 \max_{s<T}
 \left\{
 \|m^s\|_n,\|M^s\|_{\op},
 \|\cA m^s\|_{\op},\|\cA^*M^s\|_n
 \right\}
 =O_{\Prob}(1)\,.
\]
 Since $T$ is fixed, Cauchy--Schwarz yields, uniformly in $z$,
\begin{equation}\label{eq:terminal-continuity-past}
 \|Y\beta\|_{\op}
 +\|RG_R^{-1}X^*r\|_{\op}
 \le C_{d,T}^{\rm past}\|z\|_n\,,
 \qquad
 C_{d,T}^{\rm past}=O_{\Prob}(1)\,.
\end{equation}

Conditionally on the history and on $r\ne0$,
$\widetilde\cA r/\|r\|_n$ is a standard GOE matrix $W_d$, and one has under a coupling 
\[\|P_R\widetilde\cA r\|_{\op} \le \|W_d\|_{\op} \|r\|_n\,.\]
 Combining this with
\eqref{eq:terminal-continuity-conditioning} and
\eqref{eq:terminal-continuity-past} proves 
the required bound $\|\cA z\|_{\op} \le C_{d,T} \|z\|_n$ with 
\[C_{d,T} = C_{d,T}^{\rm past} + \|W_d\|_{\op}\,. \]
Uniform tightness follows since $C_{d,T}^{\rm past} = O_{\Prob}(1)$ and $\|W_d\|_{\op}$ has a decaying tail in $d$.
\end{proof}

As we mentioned in the introduction, the fact that $z$ is $\mathscr F_T$-measurable (i.e., obtained by at most $2T$ linear measurements of $\cA$ and $\cA^*$) is crucial to the above lemma: a worst case bound via the norm $\|\cA\|_{(\R^n,\|\cdot\|_n)\to(\Symm,\|\cdot\|_{\op})}$ is useless since the later is of order $\sqrt d$.
    
Now we characterize the limit of an extra ``vector-to-matrix half-step" of the AMP iteration. 
Run the AMP iteration~\eqref{eq:AMP0}-\eqref{eq:AMP1} up to time $T-1$. The output trajectory is $(u^0\ldots,u^T)$, $(V^0,\ldots,V^{T-1})$.
 At the next step let $k:\R^{T+1}\to\R$ be $C^1$, and suppose that
$k,\partial_0k,\ldots,\partial_Tk$ are pseudo-Lipschitz of finite
order.  Define
\[
 \widehat z_i:=k(u_i^0,\ldots,u_i^T)\,,
 \qquad
 K:=k(U^0,\ldots,U^T)\,,
\]
and
\[
 d_{k,r}:=\frac1n\sum_{i=1}^n
 \partial_rk(u_i^0,\ldots,u_i^T),
 \qquad
 c_{k,r}:=\E[\partial_rk(U^0,\ldots,U^T)],
 \quad 1\le r\le T\,.
\]
Let the extra half-step be
\begin{equation}\label{eq:extra-half-step}
 \widehat{V}:=
 \cA \widehat z - \alpha_d\sum_{r=1}^T d_{k,r}M^{r-1}\,.
\end{equation}
A consequence of the just-proved state evolution is the following joint limit. 
\begin{lemma}
\label{lem:smooth-terminal-half-step}
There is a centered semicircular variable $\widehat S$, jointly
semicircular with $S^0,\ldots,S^{T-1}$, such that
\[
 \taup(\widehat S^2)=\E[K^2]\,,
 \qquad
 \taup(\widehat S S^s)=\E[KG^s]\,,
 \quad 0\le s<T\,,
\]
and
\[
 (V^0,\ldots,V^{T-1},\widehat V,M^0,\ldots,M^{T-1})
 \xrightarrow{\mathrm{str}}
 (S^0,\ldots,S^{T-1},\widehat S, W^0,\ldots,W^{T-1})\,.
\]
Consequently,
\begin{equation}\label{eq:smooth-terminal-strong-limit}
 \cA \widehat z
 \xrightarrow{\mathrm{str}}
 \widehat S + \alpha\sum_{r=1}^T c_{k,r}W^{r-1}\,.
\end{equation}
\end{lemma}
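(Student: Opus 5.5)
The plan is to repeat the matrix update of Section~\ref{sec:matrix-update} verbatim with $g_T$ replaced by $k$. Theorem~\ref{thm:direct_SE}, run to time $T-1$, gives the hypothesis $\mathcal H_T$ restricted to the $u$-side up to $u^T$ and the matrix side up to $V^{T-1},M^{T-1}$. By Lemma~\ref{lem:emp-calc} it also gives
\[
(u_i^0,\ldots,u_i^T,m_i^0,\ldots,m_i^{T-1},\widehat z_i)\xrightarrow{\mathrm{emp}}(U^0,\ldots,U^T,G^0,\ldots,G^{T-1},K)
\quad\text{and}\quad d_{k,r}\xrightarrow{p}c_{k,r}.
\]
Note that $\widehat z$ is $\mathscr F_T$-measurable. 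We decompose $\widehat z=Q_{T-1}\widehat\beta+\widehat h$ with $\widehat\beta=G_{m,T-1}^{-1}Q_{T-1}^*\widehat z$, using the Gram invertibility from Proposition~\ref{prop:exact-rank} and Lemma~\ref{lem:limiting-Gram-positivity}. Lemma~\ref{lem:adaptive-conditioning} then gives, conditionally on $\mathscr F_T$,
\[
\cA\widehat z\stackrel{d}{=}Y_{T-1}\widehat\beta+R_{T-1}G_{M,T-1}^{-1}X_{T-1}^*\widehat h+P_{R_{T-1}}^\perp\widetilde\cA\widehat h.
\]

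Next I would rerun Lemma~\ref{lem:matrix-side-regression} with $g_t$ replaced by $k$. Since $(X_{T-1}^*\widehat h)_r=\ip{u^{r+1}}{\widehat h}_n$, Gaussian integration by parts (the Stein identity \eqref{eq:limiting-Stein} applied to $K$) shows that
\[
R_{T-1}G_{M,T-1}^{-1}X_{T-1}^*\widehat h=\alpha_dR_{T-1}\Big(\widehat d-\sum_{j<T}\widehat\beta_j d_j^{(T)}\Big)+o_{\op}(1),
\qquad (\widehat d)_r=d_{k,r+1}.
\]
This is exactly the regularity of $k$ that the statement assumes. Adding this to $Y_{T-1}\widehat\beta=\sum_j\widehat\beta_jV^j+\alpha_dR_{T-1}\sum_j\widehat\beta_jd_j^{(T)}$ cancels against the Onsager term in \eqref{eq:extra-half-step}, so that
\[
\widehat V\stackrel{d}{=}\sum_{j<T}\widehat\beta_jV^j+\|\widehat h\|_nW^d+o_{\op}(1).
\]
Here $W^d$ is a fresh GOE matrix, and the projection $P_{R_{T-1}}\widetilde\cA\widehat h$ is $o_{\op}(1)$ by Lemma~\ref{lem:projected-innovation}. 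Proposition~\ref{prop:strong-free} then gives joint strong convergence to $\widehat S=\sum_j\widehat\beta^\infty_jS^j+\|K-\Pi K\|_{L^2}\,s$, with $s$ free from the past. The stated covariances follow because the regression coefficients and residual variance are those of $K$ against $(G^s)_{s<T}$, and second-order free cumulants are bilinear, exactly as in the matrix update. The strong convergence is preserved after adjoining the $M^s$, since these are fixed linear combinations of the $V^s$.

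Finally, \eqref{eq:smooth-terminal-strong-limit} follows from $\cA\widehat z=\widehat V+\alpha_d\sum_r d_{k,r}M^{r-1}$ together with $d_{k,r}\to c_{k,r}$ and $\alpha_d\to\alpha$. The random coefficients converge in probability and the $M^{r-1}$ have bounded norm, so replacing them by their limits changes norms and traces of polynomials by $o_{\Prob}(1)$.

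The main obstacle I expect is the Stein computation in the regression step: one must check that $\E[U^{r+1}K]=\alpha\sum_s\taup(W^rW^s)c_{k,s+1}$. This uses that $(U^1,\ldots,U^T)$ is Gaussian and independent of $U^0$, and that $k$ has pseudo-Lipschitz derivatives, so that the empirical derivative averages converge. A second point needing care is the degenerate case $K\in\Span\{G^s\}$. Then $\|\widehat h\|_n\to0$ and the fresh term is simply $o_{\op}(1)$ because $\|W^d\|_{\op}=O_{\Prob}(1)$, so the argument still goes through.
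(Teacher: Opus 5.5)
Your proposal is correct. It is essentially the argument the paper has in mind when it calls this lemma a consequence of the just-proved state evolution: you rerun the matrix update of Section~\ref{sec:matrix-update} with $g_T$ replaced by $k$. That means conditioning via Lemma~\ref{lem:adaptive-conditioning}, the Stein/regression step of Lemma~\ref{lem:matrix-side-regression}, the Onsager cancellation, and Proposition~\ref{prop:strong-free}, after which the covariances of $\widehat S$ are read off from the $L^2$-projection of $K$ onto $\Span\{G^0,\ldots,G^{T-1}\}$. You also rightly observe that only the Gram matrices up to time $T-1$ are inverted, so $k$ need not satisfy Assumption~\ref{ass:transversality} and the degenerate case $\|\widehat h\|_n\to0$ causes no trouble; this is why one reruns the matrix step instead of invoking Theorem~\ref{thm:direct_SE} with $g_T$ replaced by $k$.
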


\subsection{Proof of Theorem~\ref{thm:main-general}}
We are now ready to finish the proof of our main theorem.
Let
\[
 X=(U^0,\ldots,U^T),
 \qquad
 x_i=(u_i^0,\ldots,u_i^T),
 \qquad
 K=H-G^T.
\]
To verify joint empirical convergence, we apply
Lemma~\ref{lem:ae-empirical-transform} to the tuple 
\[\bigl(u_i^0,\ldots,u_i^{T},m_i^0,\ldots,m_i^T\bigr)_{i\le n}\] 
appearing in \eqref{eq:vector-SE-main}, with $f$ applying $h$ to the first $T$ coordinates and ignoring the rest. This is a bounded measurable function, and by Assumption~\ref{ass:zero-discontinuity}, $\operatorname{Disc}(f)$ has zero probability. Hence we can adjoin $h(x_i)$ to the empirical convergence:
\[\bigl(u_i^0,\ldots,u_i^{T},m_i^0,\ldots,m_i^T,\widehat m_i\bigr)_{i\le n}
  \xrightarrow{\mathrm{emp}}
  \bigl(U^0,\ldots,U^{T},G^0,\ldots,G^T, H\bigr)\,.\]
Now let $\mu=\Law(X)$.  We choose
$h_a\in C_c^\infty(\R^{T+1})$ such that
\begin{equation}\label{eq:approx_h}
\|h_a-h\|_{L^2(\mu)}\longrightarrow 0 \,.	
\end{equation}
Define
\[
 k_a=h_a-g_T\,,
 \qquad
 z_i^{(a)}=k_a(x_i)\,,
 \qquad
 K_a=k_a(X)\,,
 \qquad
 \widehat m_i = h(x_i)\,,\qquad 
 z=\widehat m-m^T\,.
\]
Lemma~\ref{lem:ae-empirical-transform} yields
\[
 \|z^{(a)}-z\|_n^2
 \xrightarrow{p}
 \E[(h_a(X)-h(X))^2]\,.
\]
The right-hand side tends to 0 as $a \to \infty$ by~\eqref{eq:approx_h}. Therefore Lemma~\ref{lem:terminal-query-continuity} implies, for every $\eta>0$,
\begin{equation}\label{eq:terminal-smooth-operator-error}
 \lim_{a\to\infty}\limsup_{d\to\infty}
 \Prob\bigl(\|\cA(z^{(a)}-z)\|_{\op}>\eta\bigr)=0\,.
\end{equation}
Fix $a$.  The map $k_a$ and its first derivatives are
pseudo-Lipschitz of finite order.  By
Lemma~\ref{lem:smooth-terminal-half-step} with $\widehat z = z^{(a)}$,
\begin{equation}\label{eq:strong_z_a}
 \cA z^{(a)}
 \xrightarrow{\mathrm{str}}
 L_a:=
 S_a+\alpha\sum_{r=1}^Tc_{a,r}W^{r-1},
 \qquad
 c_{a,r}:=\E[\partial_rk_a(X)]\,.
\end{equation}
We claim that 
\[
 \|L_a\| \le 2(1+\sqrt\alpha)\|K_a\|_{L^2}\,.
\]
Indeed, $\|L_a\| \le \|S_a\| + \|B_a\|$ with 
\[
 B_a:=\alpha\sum_{r=1}^Tc_{a,r}W^{r-1}\,.
\]
Since each operator is semicircular, it suffices to their variances.
By lemma~\ref{lem:smooth-terminal-half-step} we have $\tau(S_a^2) = \E[K^2]$, and    
\[
	\tau(B_a^2) = \alpha^2 \sum_{r,r'} c_{a,r}c_{a,r'} \tau(W^{r-1}W^{r'-1})
	= \alpha \sum_{r,r'} c_{a,r}c_{a,r'} \E[U^{r}U^{r'}]\,,
\]
where the last inequality follows from the covariance recursion~\eqref{eq:covU}.
By Gaussian integration by parts, 
\begin{align*}
	\tau(B_a^2) &= \alpha \sum_{r} c_{a,r} \E[U^{r}K_a] \\
	&\le \alpha \E\Big[\big(\sum_{r} c_{a,r} U^{r}\big)^2\Big]^{1/2} \E[K_a^2]^{1/2} \\
    &= \sqrt{\alpha}\tau(B_a^2)^{1/2}\E[K_a^2]^{1/2}\,.
\end{align*}
Therefore, 
\[\tau(B_a^2) \le \alpha\E[K_a^2]\,.\]
This proves the bound on $\|L_a\|$.
The strong convergence~\eqref{eq:strong_z_a} now implies
\[
 \Prob\left(
 \|\cA z^{(a)}\|_{\op}
 >
 2(1+\sqrt\alpha)\|K_a\|_{L^2}+\eta
 \right)\longrightarrow 0\,.
\]
Finally,
\[
 \|K_a-K\|_{L^2}
 =\|h_a-h\|_{L^2(\mu)}
 \longrightarrow 0 \,.
\]
Combining this fact, \eqref{eq:terminal-smooth-operator-error}, and the
triangle inequality proves 
\[
 \Prob\left(
 \|\cA z\|_{\op}
 >
 2(1+\sqrt\alpha)\|H - G^T\|_{L^2}+\eta
 \right)\longrightarrow 0\,.
\]
With $z =\widehat m - m$, this is the claimed bound~\eqref{eq:terminal-comparison-opnorm}.

\paragraph{Use of AI:} The authors interacted extensively with AI to experiment with different proof strategies and adapt classical arguments to our setting, particularly in extending the Gaussian conditioning technique and the induction argument of~\cite{berthier2017state}. The stability Lemma~\ref{lem:terminal-query-continuity} was suggested to us by GPT 5.6 Sol Pro. 
The ideas of considering and designing the AMP iteration, a guess of the limiting state evolution Gaussian-semicircular process, the use of the strong convergence result~\cite{fan2021principal} in the induction argument, and the specific choices of the non-linearities designed to solve the random matrix discrepancy problem are the authors'. The paper was written by the authors. All potential errors are our own. 

\paragraph{Acknowledgements:} AE was supported by the National Science Foundation grant DMS-2450867.

\bibliographystyle{alpha}

\begin{thebibliography}{KOWZ24}

\bibitem[AMS21]{ams2020}
Ahmed~El Alaoui, Andrea Montanari, and Mark Sellke.
\newblock Optimization of mean-field spin glasses.
\newblock {\em The Annals of Probability}, 49(6):2922--2960, 2021.

\bibitem[APZ19]{aubin2019storage}
Benjamin Aubin, Will Perkins, and Lenka Zdeborova.
\newblock Storage capacity in symmetric binary perceptrons.
\newblock {\em Journal of Physics A: Mathematical and Theoretical}, 52(29):294003, 2019.

\bibitem[AS22]{alaoui2022algorithmic}
Ahmed~El Alaoui and Mark Sellke.
\newblock Algorithmic pure states for the negative spherical perceptron.
\newblock {\em Journal of Statistical Physics}, 189(2):27, 2022.

\bibitem[AS26]{akbas2026algebraic}
Emrullah Akbas and Suvrit Sra.
\newblock An algebraic matrix spencer theorem.
\newblock {\em arXiv preprint arXiv:2606.16005}, 2026.

\bibitem[BB26]{bandeira2026matrix}
Afonso~S Bandeira and Helmut B{\"o}lcskei.
\newblock Matrix discrepancy for representations of finite groups.
\newblock {\em arXiv preprint arXiv:2606.12181}, 2026.

\bibitem[BJM23]{bansal2023resolving}
Nikhil Bansal, Haotian Jiang, and Raghu Meka.
\newblock Resolving matrix spencer conjecture up to poly-logarithmic rank.
\newblock In {\em Proceedings of the 55th Annual ACM Symposium on Theory of Computing}, pages 1814--1819, 2023.

\bibitem[BMN20]{berthier2017state}
Raphael Berthier, Andrea Montanari, and Phan-Minh Nguyen.
\newblock State evolution for approximate message passing with non-separable functions.
\newblock {\em Information and Inference: A Journal of the IMA}, 9:33--79, 2020.

\bibitem[Bol14]{bolthausen2014iterative}
Erwin Bolthausen.
\newblock {An iterative construction of solutions of the TAP equations for the Sherrington--Kirkpatrick model}.
\newblock {\em Communications in Mathematical Physics}, 325(1):333--366, 2014.

\bibitem[BS98]{biane1998stochastic}
Philippe Biane and Roland Speicher.
\newblock Stochastic calculus with respect to free brownian motion and analysis on wigner space.
\newblock {\em Probability Theory and Related Fields}, 112(3):373--409, 1998.

\bibitem[BS13]{bansal2013deterministic}
Nikhil Bansal and Joel Spencer.
\newblock Deterministic discrepancy minimization.
\newblock {\em Algorithmica}, 67(4):451--471, 2013.

\bibitem[DJR22]{dadush2022new}
Daniel Dadush, Haotian Jiang, and Victor Reis.
\newblock A new framework for matrix discrepancy: Partial coloring bounds via mirror descent.
\newblock In {\em Proceedings of the 54th Annual ACM SIGACT Symposium on Theory of Computing}, pages 649--658, 2022.

\bibitem[FSW21]{fan2021principal}
Zhou Fan, Yi~Sun, and Zhichao Wang.
\newblock Principal components in linear mixed models with general bulk.
\newblock {\em The Annals of Statistics}, 49(3):1489--1513, 2021.

\bibitem[GKPX22]{gamarnik2022algorithms}
David Gamarnik, Eren~C K{\i}z{\i}lda{\u{g}}, Will Perkins, and Changji Xu.
\newblock Algorithms and barriers in the symmetric binary perceptron model.
\newblock {\em 2022 IEEE 63rd Annual Symposium on Foundations of Computer Science (FOCS)}, 2022.

\bibitem[HRS22]{hopkins2022matrix}
Samuel~B Hopkins, Prasad Raghavendra, and Abhishek Shetty.
\newblock Matrix discrepancy from quantum communication.
\newblock In {\em Proceedings of the 54th Annual ACM SIGACT Symposium on Theory of Computing}, pages 637--648, 2022.

\bibitem[HSS26]{huang2026algorithmic}
Brice Huang, Mark Sellke, and Nike Sun.
\newblock Algorithmic threshold for high-dimensional projection pursuit i: general theory.
\newblock {\em arXiv preprint arXiv:2608.29416}, 2026.

\bibitem[HT05]{haagerup2005new}
Uffe Haagerup and Steen Thorbj{\o}rnsen.
\newblock A new application of random matrices: is not a group.
\newblock {\em Annals of Mathematics}, 162:711--775, 2005.

\bibitem[JM13]{javanmard2013state}
Adel Javanmard and Andrea Montanari.
\newblock State evolution for general approximate message passing algorithms, with applications to spatial coupling.
\newblock {\em Information and Inference: A Journal of the IMA}, 2(2):115--144, 2013.

\bibitem[JM14]{javanmard2013hypothesis}
Adel Javanmard and Andrea Montanari.
\newblock Hypothesis testing in high-dimensional regression under the gaussian random design model: Asymptotic theory.
\newblock {\em IEEE Transactions on Information Theory}, 60(10):6522--6554, 2014.

\bibitem[KOWZ24]{kunisky2024asymptotic}
Dmitriy Kunisky, Timm Oertel, Nicola Wengiel, and Peiyuan Zhang.
\newblock Asymptotic bounds and online algorithms for average-case matrix discrepancy.
\newblock {\em arXiv preprint arXiv:2410.23915}, 2024.

\bibitem[KZ23]{kunisky2023average}
Dmitriy Kunisky and Peiyuan Zhang.
\newblock Average-case matrix discrepancy: Asymptotics and online algorithms.
\newblock {\em arXiv preprint arXiv:2307.10055}, 2023.

\bibitem[LM15]{lovett2015constructive}
Shachar Lovett and Raghu Meka.
\newblock Constructive discrepancy minimization by walking on the edges.
\newblock {\em SIAM Journal on Computing}, 44(5):1573--1582, 2015.

\bibitem[LRR17]{levy2017deterministic}
Avi Levy, Harishchandra Ramadas, and Thomas Rothvoss.
\newblock Deterministic discrepancy minimization via the multiplicative weight update method.
\newblock In {\em International Conference on Integer Programming and Combinatorial Optimization}, pages 380--391. Springer, 2017.

\bibitem[Mai25]{maillard2025average}
Antoine Maillard.
\newblock Average-case matrix discrepancy: Satisfiability bounds.
\newblock {\em Random Structures \& Algorithms}, 67(3):e70033, October 2025.

\bibitem[Mal12]{male2012norm}
Camille Male.
\newblock The norm of polynomials in large random and deterministic matrices.
\newblock {\em Probability Theory and Related Fields}, 154(3):477--532, 2012.

\bibitem[Mek14]{meka2014blog}
Raghu Meka.
\newblock Discrepancy and beating the union bound, 2014.

\bibitem[Mon21]{montanari2021optimization}
Andrea Montanari.
\newblock Optimization of the sherrington--kirkpatrick hamiltonian.
\newblock {\em SIAM Journal on Computing}, (0):FOCS19--1, 2021.

\bibitem[MS17]{mingo2017free}
James~A Mingo and Roland Speicher.
\newblock {\em Free probability and random matrices}, volume~35.
\newblock Springer, 2017.

\bibitem[MZZ24]{montanari2021tractability}
Andrea Montanari, Yiqiao Zhong, and Kangjie Zhou.
\newblock Tractability from overparametrization: The example of the negative perceptron.
\newblock {\em Probability Theory and Related Fields}, 188:805--910, 2024.

\bibitem[Sch05]{schultz2005non}
Hanne Schultz.
\newblock Non-commutative polynomials of independent gaussian random matrices. the real and symplectic cases.
\newblock {\em Probability Theory and Related Fields}, 131(2):261--309, 2005.

\bibitem[Spe85]{spencer1985six}
Joel Spencer.
\newblock Six standard deviations suffice.
\newblock {\em Transactions of the American Mathematical Society}, 289(2):679--706, 1985.

\bibitem[Sub21]{subag2018following}
Eliran Subag.
\newblock Following the ground states of full-rsb spherical spin glasses.
\newblock {\em Communications on Pure and Applied Mathematics}, 74(5):1021--1044, 2021.

\bibitem[vH25]{van2025strong}
Ramon van Handel.
\newblock The strong convergence phenomenon.
\newblock {\em arXiv preprint arXiv:2507.00346}, 2025.

\bibitem[Voi91]{voiculescu1991limit}
Dan Voiculescu.
\newblock Limit laws for random matrices and free products.
\newblock {\em Inventiones Mathematicae}, 104(1):201--220, 1991.

\bibitem[Zou12]{zouzias2012matrix}
Anastasios Zouzias.
\newblock A matrix hyperbolic cosine algorithm and applications.
\newblock In {\em International Colloquium on Automata, Languages, and Programming}, pages 846--858. Springer, 2012.

\end{thebibliography}

\end{document}